\newcommand\footnoteref[1]{\protected@xdef\@thefnmark{\ref{#1}}\@footnotemark}
\date{}
\def\beq{\begin{equation}}
	\def\eeq{\end{equation}}
\def\det{\textrm{det}\ }
\newcommand\restr[2]{{ 
		\left.\kern-\nulldelimiterspace 
		#1 
		\vphantom{\big|} 
		\right|_{#2} 
}}
\newcommand{\Z}{{\mathbb Z}}
\newcommand{\R}{{\mathbb R}}
\newcommand{\N}{{\mathbb N}}
\newcommand{\TT}{{\mathbb{T}^3}}
\newcommand{\cB}{{\mathcal B}}
\newcommand{\cN}{{\mathcal N}}
\newcommand{\cP}{{\mathcal P}}
\newcommand{\cW}{{\mathcal W}}
\newcommand{\eps}{\varepsilon}
\newcommand{\per}{\operatorname{Per}(f)}
\newcommand{\card}{\operatorname{card}}
\newcommand{\cp}{\mathcal{P}^{\operatorname{erg}}_f(M)}
\newcommand{\eqdef}{\stackrel{\scriptscriptstyle\textrm def.}{=}}
\newcommand{\Leb}{\operatorname{Leb}}
\newcommand{\jac}{\operatorname{J}}
\newcommand{\tpitchfork}{%
	\vbox{
		\baselineskip\z@skip
		\lineskip-.52ex
		\lineskiplimit\maxdimen
		\m@th
		\ialign{##\crcr\hidewidth\smash{$-$}\hidewidth\crcr$\pitchfork$\crcr}
	}%
}
\newtheorem{theorem}{Theorem}[section]
\newtheorem{corollary}{Corollary}
\newtheorem{maintheorem}{Theorem}
\newtheorem{lemma}[theorem]{Lemma}
\newtheorem{proposition}{Proposition}
\theoremstyle{definition}
\newtheorem{definition}[theorem]{Definition}
\newtheorem{remark}{Remark}
\title{Entropy rigidity of $u$-Gibbs measures}
\author{Vítor Gomes and Bruno Santiago}
\date{\today}
\begin{document}

	\maketitle
	
	\begin{abstract}
		We obtain new entropy rigidity results for $u$-Gibbs measures by showing that whenever a $u$-Gibbs measure of a partially hyperbolic diffeomorphism admits an unstable Margulis family, the unstable Jacobian data of the system must to be constant. We apply our result to center isometries and flow type diffeomorphisms showing that if a measure of maximal entropy is also $u$-Gibbs then Jacobian periodic data along the unstable bundle are constant.  In the case of smooth jointly integrable partially hyperbolic diffeomorphisms of $\TT$, assuming that there exists some $u$-Gibbs measure which is also a measure of maximal unstable entropy, we obtain smooth conjugacy along the center-unstable foliation and uniqueness of $u$-Gibbs measures in this case. 
	\end{abstract}	
	
	\tableofcontents
	
	\section{Introduction}
	
	Entropy is a fundamental invariant in dynamics, which gives the asymptotic growth rate of the number of distinguishable orbits, up to some arbitrarily small scale. A generalization of entropy is the concept of pressure, which is a weighted version: instead of simply counting one weights each distinguishable orbit with the exponential of the Birkhoff sum of a given potential. In this sense, entropy is nothing but the pressure with respect to the constant zero potential. The notion of pressure - and in particular entropy - can be given in the topological and in the measure theoretical sense. A beautiful result in basic ergodic theory is the variational principle of Walters \cite{W}, which asserts that the topological pressure is the supremum over all invariant probability measures of the sum of the entropy with the average value of the potential. 
	
	The variational principle hints that measures achieving the supremum must to be dynamically relevant. We call such measures equilibrium states. Naturally enough, the quest for a satisfactory theory of equilibrium states is a primary goal in ergodic theory, still far beyond reach for several important classes of dynamical systems.   
	
	However, in the uniformly hyperbolic setting we have a fairly complete theory of equilibrium states, nowadays given in book-from, for instance in Chapter 20 of \cite{KH} or the classical monograph \cite{BowenR} of Bowen. We say that a diffeomorphism $f:M\to M$ of a smooth $d$-dimensional manifold $M$ is an Anosov diffeomorphism if the tangent bundle decomposes $TM=E^s\oplus E^u$ into invariant sub-bundles, with respect to the deferential $Df$ of $f$, so that vectors in the \emph{stable bundle} $E^s$ are exponentially contracted under future iteration while vectors in the \emph{unstable bundle} $E^u$ are exponentially contracted under past iteration. These bundles uniquely integrate into invariant foliations, with smooth leaves (as smooth as the map) with continuous foliated charts.

	We know that given an Anosov diffeomorphism with a dense orbit (transitive, which conjecturally are all of them), to each Hölder continuous potential we can associate a unique equilibrium state. This a source of construction of interesting ergodic invariant measures. For instance, if we take as potential the Jacobian determinant along the unstable bundle, the equilibrium state is a measure absolutely continuous along the unstable foliation, the so-called SRB measure.  
	
	Moreover, the theory also predicts that two equilibrium states coincide if, and only if, their respective potentials satisfy a co-homological equation - in particular, they must have the same Birkhoff sum at every periodic orbit. A particularly beautiful corollary of this theory is the follwoing: if we deform a linear Anosov torus automorphism but retaining the property that the entropy of the unique SRB measure equals the topological entropy then the Jacobian data along the unstable bundle must to be constant over all periodic orbits. For two dimensional systems, one may deduce from this the existence of a conjugacy with a linear model, smooth along the unstable foliation, thus getting a strong form of rigidity. In the conservative case, one can get even full smoothness of such conjugacy, see Theorem 20.21 of \cite{KH}, or the original reference \cite{LlaveMarcoMoriyon1987} for smooth conjugacy from coincidence of periodic data.
	 
	This is perhaps the best known example we have of an entropy rigidity statement for measures. The main motivation for the present paper is to find new examples of similar entropy rigidity statements. 
	
	In this direction, one of the most natural generalizations of uniform hyperbolicity is the class of \emph{partially hyperbolic systems}. We say that a diffeomorphism $f:M\to M$ is partially hyperbolic if there exists a decomposition of the tangent bundle $TM=E^s\oplus E^c\oplus E^u$ which is $Df$-invariant, such that vectors in the stable bundle $E^s$ are contracted, vectors in $E^u$ are expanded (contracted exponentially under past iteration) while vectors in the \emph{center bundle} $E^c$ have a dominated behaviour: they are never as contracted as any vector in $E^s$ neither they are as expanded as vectors in $E^u$ (see Section~\ref{sec.dois} for a more complete treatment).
	
	For partially hyperbolic systems we are still very far from a complete theory of equilibrium states, although this a rich and active topic of research in the field, see the introduction of \cite{CPZ} and the references therein for some account. This is the main reason why we search here for entropy rigidity of \emph{$u$-Gibbs measures}. An invariant measure under a partially hyperbolic map $f:M\to M$ is said to be $u$-Gibbs if conditionals along the unstable foliation are absolutely continuous with respect to the inner Lebesgue measure on the leafs (defined with the inherited Riemannian structure from the ambient $M$). This is a natural replacement for SRB measures in this case: $u$-Gibbs measures always exist in this setting \cite{PS}. However, in general $u$-Gibbs measures are not equilibrium states, even for systems which are simultaneously Anosov and partially hyperbolic. 
	
	We take this gap in the theory as a motivation, for in this work we adopt a different approach towards entropy rigidity. We focus on the geometrical properties of conditional measures instead. Moreover, we consider systems of locally finite Borel measures defined along  leaves of the unstable foliation which in restriction to measurable subordinated partitions agree with the conditional measures almost surely and up to normalization. The geometry of the measure is characterized dynamically by the way the measure changes when we move from $x$ to $f(x)$. 
	
	For instance, we say that an invariant and ergodic measure $\mu$ admits \emph{a Margulis system unstable measures} if there exists a family $\{m^u_x\}_{x\in M}$ of locally finite Borel measures along the unstable leaf through $x$, which are uniquely defined along the leaf, coincides with conditional measures on foliated boxes, up to normalization, and satisfy
	\[
	m^u_x\circ f=e^{\lambda}m^u_{f^{-1}(x)},
	\]  
	for some constant $\lambda>0$. 

    Our entropy rigidity statement for $u$-Gibbs measures is the following.
    
    \begin{maintheorem}
    	\label{main.technical}
    	Let $f:M\to M$ be a partially hyperbolic diffeomorphism with minimal unstable foliation. Assume that there exists some ergodic $u$-Gibbs measure $\mu\in\cp$ which admits a Margulis system of unstable measures. Then, $f$ has constant unstable Jacobian periodic data. 	
    \end{maintheorem}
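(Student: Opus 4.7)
The plan is to extract from the Margulis family a continuous coboundary representation of $\log J^u f$ and then obtain constant periodic data by telescoping along periodic orbits. I would first construct a positive function $g\colon M\to \R_{>0}$ such that on each unstable leaf $L=W^u(x)$, $g|_L$ is the Radon--Nikodym derivative of the Margulis measure $m^u_x$ with respect to the unstable Lebesgue measure $\Leb^u_L$. Since $\mathrm{supp}(\mu)$ is closed, $f$-invariant and $u$-saturated, minimality of the unstable foliation forces $\mathrm{supp}(\mu)=M$, so the Margulis family is defined at every point. In each foliation box, the $u$-Gibbs conditional of $\mu$ has the continuous Pesin--Sinai density $\rho^u$ with respect to $\Leb^u$ on plaques, so the ``coincides up to normalization'' hypothesis gives $dm^u_x/d\Leb^u=c_P\rho^u$ on each plaque $P$, with $c_P>0$ constant on $P$. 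Since Pesin--Sinai densities for different reference points agree up to multiplicative constants, these local representations glue into a well-defined function $g_L$ which is continuous along each leaf.

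Next I would translate the Margulis equivariance $m^u_x\circ f=e^{\lambda}m^u_{f^{-1}(x)}$ into a transformation law for $g$, using the unstable change-of-variables $\Leb^u(f(A))=\int_A J^u f\,d\Leb^u$. This produces the identity
\[
g(f(z))\,J^u f(z) = e^{\lambda}\,g(z),
\]
valid $\Leb^u$-almost everywhere on each leaf and, by continuity of both sides along unstable leaves, valid pointwise on $M$. Taking logarithms,
\[
\log J^u f(z) - \lambda = \log g(z) - \log g(f(z)), \qquad z\in M,
\]
so $\log J^u f - \lambda$ is an exact coboundary with continuous transfer function. Telescoping along the orbit of a periodic point $p$ of period $n$ and using $f^n(p)=p$ yields
\[
\sum_{k=0}^{n-1}\log J^u f(f^k(p)) - n\lambda = \log g(p) - \log g(f^n(p)) = 0,
\]
which is exactly the constancy of unstable Jacobian periodic data claimed.

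The delicate step is the first one: ensuring that the Margulis-to-Lebesgue density $g$ is genuinely continuous along each unstable leaf, so that the measure-theoretic identity upgrades to a pointwise equation valid at periodic points. This requires carefully matching the local Pesin--Sinai representations on overlapping foliation boxes and exploiting the uniqueness clause built into the Margulis system in order to eliminate the normalization constants. Once $g$ is established to be continuous along leaves, the cocycle computation and the telescoping are routine.
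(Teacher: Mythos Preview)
Your coboundary strategy is natural, but there is a genuine gap at the step where you pass from an almost-everywhere identity to a pointwise one valid at periodic points. The Margulis system in the paper is only a \emph{measurably} varying family $\{m^u_x\}$, and property~(2) of the definition --- that $m^u_x$ restricted to a plaque is a constant multiple of the $u$-Gibbs conditional --- holds only for $\mu$-almost every $x$ in each foliated box. Consequently the absolute continuity $m^u_x\ll\Leb^u_x$ and the density $g=dm^u_x/d\Leb^u_x$ are only available on $\mu$-almost every leaf, and your equation $\log\jac^u_z-\lambda=\log g(z)-\log g(f(z))$ is only an almost-everywhere statement. The implication ``$\operatorname{supp}(\mu)=M$, hence the Margulis family is defined at every point'' is a non sequitur: full support does not promote a measurable family to an everywhere-defined, let alone transversally continuous, one. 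Continuity of $g$ \emph{along} a typical leaf upgrades the identity to all points of that leaf, but a periodic point $p$ need not lie on any $\mu$-typical leaf, so you cannot evaluate $g$ at $p$ and the telescoping is unjustified.

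The paper circumvents exactly this by avoiding a global transfer function. It works instead with the dynamical density $\rho^u_x(y)$, which is defined and continuous for \emph{every} $x\in M$ and $y\in\cW^u(x)$. Comparing the leafwise measures $\mu^u_x$ with $m^u_x$ produces a formula for $\jac^u_x(k)$ in terms of measurable data (Lemma~\ref{lem.formulalinda}); a Lusin--Birkhoff argument then yields a uniform bound $\rho^u_x(y)\asymp_C 1$ on a full-measure set of leaves (Proposition~\ref{prop.main}); finally minimality together with the cocycle identity $\rho^u_x(y)\rho^u_y(z)=\rho^u_x(z)$ propagates this bound to every leaf (Lemma~\ref{lem.davi}), in particular to leaves through periodic points, from which constant periodic data follows (Proposition~\ref{prop.paris}). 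Your approach could be salvaged along the same lines --- apply Lusin to $\log g$ to get $\jac^u_x(k)\asymp_C e^{k\lambda}$ whenever $x$ and $f^k(x)$ lie in a large compact set, then use minimality to reach periodic orbits --- but that is essentially the paper's argument, and the direct telescoping you propose does not go through under the stated measurability hypotheses.
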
 
    
	To prove this result we take inspiration in a construction of \cite{ALOS}, the so-called leafwise quotient measure. Using this idea, we construct leafwise conditional measures along the unstable foliation for \emph{every} ergodic invariant measure. This means a family $\{\mu^u_x\}_{x\in M}$ of locally finite Radon measures defined almost everywhere on the \emph{entire} unstable leaf through $x$, which coincides with the disintegration on every measurable partition subordinated to the unstable foliation. This is a quite useful tool which bypass the difficulties caused by the non-measurability of the unstable foliation in general. We believe that this construction has an independent interest and for this reason we give a detailed argument, even though the ideas are quite similar to those used in \cite{ALOS}. 
	
	In the case of $u$-Gibbs measures, this construction allows us to have a density defined on entire leafs. 
	
	Moreover, in our approach towards Theorem~\ref{main.technical} we are able to solve the following problem: suppose that a partially hyperbolic diffeomorphism with minimal unstable foliation admits some $u$-Gibbs measure whose conditionals have a density bounded on the entire leaf. Then, the unstable Jacobian data at periodic orbits must to be constant (see Theorem~\ref{teo.densiteborne} for the precise statement). To the best of our knowledge, this result is new even in the two dimensional Anosov case. It also generalises \cite{MicenaTahzibi2016} and Théorème 2.2.1 from \cite{alvaro}, by Alvarez and Bonatti, which is the same statement for the geodesic flow on negative curvature.

	\subsection{Some aplications of Theorem~\ref{main.technical}}
	
	Systems of unstable measures always exist for the measure of maximal entropy of a transitive Anosov diffeomorphism by the seminal work of Margulis. This has been an important source of inspiration for new constructions of measures of maximal entropy in the partially hyperbolic setting. Many recent works have succeeded in constructing Margulis-like systems of unstable measures in different settings. All these results can be combined with Theorem~\ref{main.technical} in order to give new entropy rigidity statements for $u$-Gibbs measures. Below, we would like to highlight some of these new applications.

	 In the case of \emph{center isometries} (see Section~\ref{sec.dois} for the precise definition), the work of Carrasco and Rodriguez-Hertz \cite{CRH} shows the existence of a Margulis system. Using their result, we obtain
	
	\begin{maintheorem}
		\label{main.teob}
		Let $f:M\to M$ be a center isometry of class $C^2$,  with minimal stable and unstable foliations. Assume that $f$ admits a $u$-Gibbs measure which is a measure of maximal entropy. Then, $f$ has constant unstable periodic data. 	
	\end{maintheorem}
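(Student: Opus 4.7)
The plan is to derive Theorem~\ref{main.teob} as a direct corollary of Theorem~\ref{main.technical}, using the work of Carrasco and Rodriguez-Hertz \cite{CRH} to produce the Margulis family required as input. The hypotheses on $f$ have been arranged precisely so that every ingredient needed by Theorem~\ref{main.technical} is either assumed or supplied externally.

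First I would invoke the construction of \cite{CRH}: for a $C^2$ center isometry with minimal stable and unstable foliations, they produce a family $\{m^u_x\}_{x\in M}$ of locally finite Borel measures along the unstable leaves which is uniquely defined leafwise, transforms under the dynamics by the global factor $e^{h_{\mathrm{top}}(f)}$, and whose restrictions to foliated boxes coincide, up to normalization, with the unstable conditionals of the (unique) measure of maximal entropy of $f$. This is exactly the data of a Margulis system of unstable measures in the sense introduced before the statement of Theorem~\ref{main.technical}, with scaling exponent $\lambda=h_{\mathrm{top}}(f)>0$.

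Next, let $\mu$ be the $u$-Gibbs measure given by hypothesis which is also a measure of maximal entropy. Since the measure of maximal entropy is unique for such $f$ by \cite{CRH}, $\mu$ must agree with it; in particular $\mu$ is ergodic and admits the Margulis system $\{m^u_x\}$ described above. Thus all the hypotheses of Theorem~\ref{main.technical} are verified: $f$ is partially hyperbolic with minimal unstable foliation, and $\mu\in\cp$ is an ergodic $u$-Gibbs measure admitting a Margulis system of unstable measures. Applying Theorem~\ref{main.technical} directly yields that $f$ has constant unstable Jacobian periodic data, which is the conclusion of Theorem~\ref{main.teob}.

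The only delicate point is to verify the compatibility between the Margulis-type system produced in \cite{CRH} and the precise notion of \emph{Margulis system of unstable measures} used here, namely uniqueness along the leaf, agreement with conditionals on foliated boxes up to normalization, and covariance $m^u_x\circ f = e^{\lambda}\, m^u_{f^{-1}(x)}$ with $\lambda>0$. All three features are built into the construction of \cite{CRH}, so this step is a matter of carefully translating their language into ours rather than a substantive obstacle; consequently, the main content of the proof is entirely absorbed by Theorem~\ref{main.technical}.
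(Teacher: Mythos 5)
Your argument is correct and essentially identical to the paper's: the paper likewise observes that Theorem~A of \cite{CRH} furnishes the required Margulis system of unstable measures for a $C^2$ center isometry with minimal stable and unstable foliations, and then applies Theorem~\ref{main.technical} directly. You spell out one step the paper leaves implicit—namely, that the hypothesized $u$-Gibbs measure which maximizes entropy must coincide with the unique measure of maximal entropy constructed in \cite{CRH}, so that the Margulis family is indeed compatible with the conditionals of $\mu$ on foliated boxes as required by the definition—which is a welcome clarification rather than a departure.
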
	

	For partially hyperbolic diffeomorphisms of \emph{flow type} (See Definition 3.1 of \cite{BFT}), the existence of a Margulis systems for certain measures of maximal entropy is ensured by Theorem 3.6 of \cite{BFT}, by Buzzi, Fisher and Tahzibi. In particular, using our main result we also obtain constant unstable periodic Jacobian data, whenever a flow type partially hyperbolic diffeomorphism admits a measure of maximal entropy with non-positive center Lyapunov exponent which is also $u$-Gibbs.   
	
	Another important work to mention is the recent preprint by Humbert \cite{humbert} which shows the existence of a Margulis system  for partially hyperbolic Anosov systems with expanding center in dimension three (see Section~\ref{sec.dois} for precise definitions), for measures of maximal \emph{unstable entropy} (see Section~\ref{sec.dois} for the definition of unstable entropy). We thus get:

	\begin{maintheorem}
		\label{teo.main}
		Let $f:\TT\to\TT$ be a $C^{\infty}$ Anosov partially hyperbolic diffeomorphism with expanding center. Assume that there exists an ergodic $u$-Gibbs state which is also a measure of maximal unstable entropy. Then, $f$ has constant unstable periodic data.
	\end{maintheorem}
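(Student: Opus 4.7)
The strategy is to deduce Theorem~\ref{teo.main} as a direct application of Theorem~\ref{main.technical}, with Humbert's construction of Margulis unstable measures \cite{humbert} supplying the required input. The plan has three steps.

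First, I would verify that the strong unstable foliation $\cW^u$ of $f$ is minimal. Since $f$ is Anosov and partially hyperbolic on $\TT$ with expanding center, it is homotopic to a linear Anosov automorphism whose linearization has three real eigenvalues of moduli $|\mu_s|<1<|\mu_c|<|\mu_u|$. For $3$-torus partially hyperbolic diffeomorphisms of this type, minimality of the strong unstable foliation is a known fact, to be quoted from the literature on partial hyperbolicity in dimension three (in the spirit of Brin--Burago--Ivanov, Hammerlindl--Ures, and Potrie).

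Second, let $\mu$ denote the given ergodic $u$-Gibbs measure which is simultaneously a measure of maximal unstable entropy. I would invoke Humbert's theorem \cite{humbert} to extract a Margulis system of unstable measures $\{m^u_x\}_{x\in\TT}$ associated with $\mu$, whose scaling constant $e^\lambda$ equals the maximal unstable entropy. With $\cW^u$ minimal, $\mu$ ergodic and $u$-Gibbs, and this Margulis system in place, the hypotheses of Theorem~\ref{main.technical} are all satisfied, and applying it yields the desired conclusion that the unstable Jacobian periodic data of $f$ is constant.

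The deep work is already packaged in the two ingredients: Humbert's existence result on the one hand, Theorem~\ref{main.technical} on the other. The only point requiring care in this plan is the bookkeeping of checking that the Margulis system furnished by \cite{humbert} satisfies the precise normalization, local finiteness, and subordination-to-conditionals conventions used in our definition, and possibly adapting one of the two results slightly if they do not match verbatim. This is the main obstacle I anticipate; once it is settled, the proof is a clean concatenation of the two results.
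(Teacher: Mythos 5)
Your proposal is correct and follows essentially the same route as the paper: invoke Humbert's Corollary 1.2 to extract a Margulis system of unstable measures for the given $u$-Gibbs, maximal-$u$-entropy measure, and then apply the main technical theorem. In fact your Step 1 (explicitly verifying minimality of $\cW^u$ via the dimension-three partially hyperbolic literature) is more careful than the paper's write-up: the version of the technical theorem quoted in Section 2 (Theorem~\ref{teo.maintechnical}) omits the minimality hypothesis, but its proof goes through Theorem~\ref{teo.densiteborne} and Lemma~\ref{lem.davi}, both of which do require minimal unstable foliation, so this hypothesis genuinely needs to be checked in the Anosov-with-expanding-center setting before the reduction is legitimate.
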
	
	
	It is still an open line of investigation what further rigidity one may or may not obtain in the setting of Theorem~\ref{teo.main}, given the conclusion of only constant unstable periodic data. Nevertheless, if we assume aditionally that the system is \emph{jointly integrable}, we obtain a very strong form of rigidity, as the following result demonstrates.

	\begin{corollary}
		\label{cor.casoji}
		Let $f:\TT\to\TT$ be a $C^{\infty}$ Anosov partially hyperbolic diffeomorphism with expanding center. If the bundle $E^s\oplus E^u$ is integrable and $f$ has a Gibbs $u$-state with maximal unstable entropy then the conjugacy with the linearization is $C^{1+\alpha}$ along center-unstable manifolds. In particular, $f$ displays a unique $u$-Gibbs measure. 
	\end{corollary}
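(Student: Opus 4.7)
The plan is to combine Theorem~\ref{teo.main} with classical rigidity machinery for Anosov diffeomorphisms of $\TT$. Since the hypotheses of the corollary contain those of Theorem~\ref{teo.main} (modulo passing to an ergodic component, which causes no trouble by standard arguments), I would first apply that theorem to conclude that $f$ has constant unstable Jacobian periodic data: there exists $\lambda^u>0$ with $\sum_{i=0}^{n-1}\log\|Df|_{E^u(f^i x)}\|=n\lambda^u$ for every periodic point $x$ of period $n$.

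Next, let $A$ be the linearization of $f$, a hyperbolic automorphism of $\TT$, and let $h:\TT\to\TT$ be the Franks-Manning conjugacy with $h\circ f=A\circ h$. Since $f$ and $A$ are both partially hyperbolic with one-dimensional strong unstable sub-bundles, $h$ intertwines the corresponding strong unstable foliations, and by comparing periodic orbits through $h$ the constant $\lambda^u$ must equal the strong unstable Lyapunov exponent of $A$. Hence $\log\jac^u f$ and $(\log\jac^u A)\circ h$ have identical Birkhoff sums at every periodic orbit, and Livsic's theorem provides a Hölder function $\phi$ with $\log\jac^u f=\lambda^u+\phi\circ f-\phi$. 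The de la Llave-Marco-Moriyon regularity theorem would then upgrade this cohomological equation to $C^{1+\alpha}$ regularity of $h$ restricted to each leaf of $\cWu$.

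To extend the regularity from $\cWu$ to $\cWcu$, I would exploit the joint integrability of $E^s\oplus E^u$. Joint integrability forces $f$ to be dynamically coherent and leaf-conjugate to $A$ (by Hammerlindl-Potrie on $\TT$), and the resulting $su$-foliation is the tool that enables a further rigidity step. Concretely, once the strong unstable periodic data of $f$ match those of $A$, an $su$-holonomy argument, of the type available only in the jointly integrable setting, should transfer this matching to the center direction, producing constant center Jacobian periodic data; a second application of Livsic and Marco-Moriyon then yields $C^{1+\alpha}$ smoothness of $h$ along $\cWc$, and Journé's lemma applied inside each $\cWcu$-leaf (decomposed into the transverse sub-foliations $\cWc$ and $\cWu$) delivers the desired $C^{1+\alpha}$ regularity of $h$ on $\cWcu$. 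For the last assertion of the corollary, the linear map $A$ admits $\Leb$ as its unique $u$-Gibbs measure, and the push-forward $h_*\mu$ of any $u$-Gibbs $\mu$ of $f$ is absolutely continuous along the strong unstable leaves of $A$ (because $h$ is smooth in the $E^u$ direction of each cu-leaf), forcing $h_*\mu=\Leb$ and hence $\mu=(h^{-1})_*\Leb$.

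The delicate point in this plan is the third step: the transfer of matching periodic data from the one-dimensional strong unstable direction $E^u$ to the center direction $E^c$. In the general, not jointly integrable, Anosov partially hyperbolic setting this step is known to fail in general, and is precisely the open problem highlighted just before the corollary's statement. The hypothesis that $E^s\oplus E^u$ is integrable is exactly what enables the $su$-holonomy argument that transfers matching periodic data from $E^u$ to $E^c$ and closes the Journé loop to obtain full $C^{1+\alpha}$ regularity of $h$ along $\cWcu$.
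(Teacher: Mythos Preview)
Your overall plan matches the paper's: apply Theorem~\ref{teo.main} to obtain constant unstable periodic data, feed this together with joint integrability into a rigidity machine to get $C^{1+\alpha}$ regularity of the Franks--Manning conjugacy along $\cWcu$, and then pull back the unique $u$-Gibbs measure of the linear model. The paper simply black-boxes the middle step as Proposition~\ref{prop.jointlyintegrable}, which it attributes to combining Gan--Shi with the arguments of \cite{GG}.

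Where your unpacking of that black box goes slightly astray is in the mechanism producing constant \emph{center} periodic data. You describe this as an $su$-holonomy transfer: first match unstable periodic data with the linear model, then use the $su$-foliation to push this matching over to the center direction. That is not how the argument runs. The Gan--Shi theorem (Theorem~\ref{teo.ganshi} in the paper) asserts that joint integrability of $E^s\oplus E^u$ is \emph{equivalent} to constant center periodic data; in particular, constancy of the center Jacobian data is a direct consequence of the integrability hypothesis alone and requires no input from the unstable side. So the logical flow is parallel rather than sequential: joint integrability gives constant $\jac^c$-data, Theorem~\ref{teo.main} gives constant $\jac^u$-data, and these two independent inputs are then combined (via Liv\v{s}ic, de la Llave--Marco--Moriy\'on type regularity, and Journ\'e) to upgrade $h$ along $\cWcu$. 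Your sketch of that last combination, and of the uniqueness of the $u$-Gibbs measure, is fine.
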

	
	\subsubsection*{Structure of the article}
	
	This paper is organized as follows: in Section~\ref{sec.dois} we give preliminary material on the ergodic theory of  partially hyperbolic dynamical systems which is important for this text. We finish the section deducing all of our results from Theorem~\ref{main.technical}. In Section~\ref{sec.leafwise} we present the detailed construction of leafwise unstable measures and deduce a formula for the measure at $f^k(x)$ in the $u$-Gibbs case. In Section~\ref{sec.formula} we place ourselfs under the assumptions of Theorem~\ref{main.technical} and prove a key formula for the unstable Jacobian. The proof of Theorem~\ref{main.technical} is completed in Section~\ref{sec.final}, where we also give a new result, namely Theorem~\ref{teo.densiteborne}.
	
	\subsubsection*{Acknowledgments} 
	We thank Sébastien Alvarez, Odylo Costa, Ali Tahzibi and Davi Obata for useful discussions.{\small
	
	V.G. was supported by \emph{Coordenação de Aperfeiçoamento de Pessoal de Nível Superior (CAPES)}. B.S. was supported by \emph{Conselho Nacional de Densenvolvimento Cinetífico e Tecnológico (CNPQ)} via the grant \emph{Bolsa PQ 307994/2025-2} and by \emph{Fundação de Amparao a Pesquisa do Estado do Rio de Janeiro (FAPERJ)} via the grants \emph{JCNE E-26/204.571/2024} and \emph{JPF E-26/210.344/2022}. B.S. and V.G. were partially supported by \emph{Coordenação de Aperfeiçoamento de Pessoal de Nível Superior (CAPES) - Finance code 001}.
	}

	\section{Partially hyperbolic dynamics}\label{sec.dois}
	
	Let $M$ be a smooth $d$ dimensional manifold endowed with some Riemannian metric. We say that a $C^r$, for $r\geq 1$, diffeomorphism $f:M\to M$ is \emph{partially hyperbolic} if there exists a decomposition of the tangent bundle $TM=E^s\oplus E^c\oplus E^u$ into non-trivial $Df$-invariant sub-bundles satisfying the following assumptions
	\begin{enumerate}
		\item $\|Df(x)|_{E^s}\|<m(Df(x)|_{E^c})\leq\|Df(x)|_{E^c}\|<m(Df(x)|_{E^u})$ and
		\item $\|Df(x)|_{E^s}\|<1<m(Df(x)|_{E^u})$, for all $x\in M$.
	\end{enumerate}  
	In the above definition $m(.)$ stands for the co-norm of the linear map, i.e. if $L$ is a linear map between finite dimensional vector spaces, $m(L)=\|L^{-1}\|^{-1}$. The bundles $E^s$, $E^c$ and $E^u$ are called, respectively, the \emph{stable, center} and \emph{unstable} bundle.  
	
	In this paper we shall use the following simplifying notation for Jacobian determinants: 
	\[
	\jac^{\star}_x\eqdef\left|\det\|Df(x)|_{E^{\star}}\|\right|,\:\:\:\star=s,c,u.
	\]
	We also define, for an integer $k>0$, 
	\[
	\jac^{\star}_x(k)\eqdef\prod_{\ell=0}^{k-1}\jac^{\star}_{f^{\ell}(x)}.
	\]
	For $k<0$ we define $\jac^{\star}_x(k)\eqdef(\jac^{\star}_{f^k(x)}(-k))^{-1}$. Observe that if the bundle $E^{\star}$ is one-dimensional we have that $\jac^{\star}_x(k)=\|Df^k(x)|_{E^{\star}}\|$. This notation allows us to see more transparently derivatives along sub-bundles as multiplicative cocycles. 
	
	We can now give the definition of the type of dynamical system we treat in Theorem~\ref{teo.main}.
	
	\begin{definition}
		Let $f:M\to M$ be a partially hyperbolic diffeomorphism. When $m(Df(x)|_{E^c})>1$, for every $x\in M$, we say that $f$ is an \emph{Anosov partially hyperbolic diffeomorphism with expanding center}. 	
	\end{definition}
	
	Notice that if the bundle $E^c$ is one-dimensional the above definition is equivalent to requiring that $\jac^c_x>1$ for every $x\in M$.
	
	In the sequel we define the dynamical systems we consider in Theorem~\ref{main.teob}.
	
	\begin{definition}
		Let $f:M\to M$ be a partially hyperbolic diffeomorphism. We say that $f$ is a \emph{center isometry} if, for every $x\in M$ and for every unit vector $v\in E^c(x)$, one has $\|Df(x)v\|=1$. 
	\end{definition}
	
	We recall also that the bundles $E^s$ and $E^u$ uniquely integrate into $f$-invariant continuous foliations $\cW^s$ and $\cW^u$ respectively. The leaves of these foliations are $C^r$ immersed submanifolds. We denote by $\cW^{\star}_r(x)$ the disk of radius $r>0$ around $x$ in the inner topology of the leaf $\cW^{\star}$, inherited from the ambient Riemannian structure. The (unique) integrability of the center bundle $E^c$ is a more delicate issue, being false in same cases, and true in some other cases. We stress that, although this is not necessary to our proofs, for three dimensional partially hypernbolic Anosov diffeomorphisms with expanding center as well as for center isometries, the center bundle is uniquely integrable into an $f$ invariant foliation $\cW^c$.   
	
	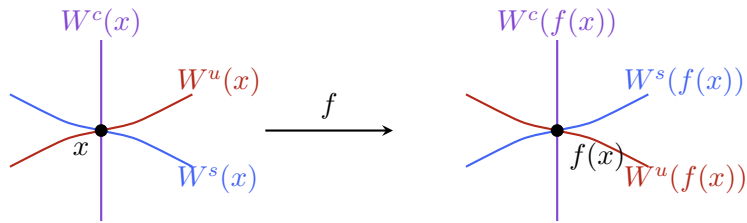
\begin{figure}[h]
		\centering
		\begin{tikzpicture}[scale=1.2, every node/.style={font=\footnotesize}]
			
			\definecolor{stable}{RGB}{66, 100, 245}
			\definecolor{unstable}{RGB}{180, 40, 25}
			\definecolor{center}{RGB}{130,70,210}
			

			\draw[stable, thick]
			plot [smooth] coordinates {(-1,0.4) (-0.4,0.1) (0,0) (0.4,-0.1) (1,-0.4)};
			\draw[unstable, thick]
			plot [smooth] coordinates {(-1,-0.4) (-0.4,-0.1) (0,0) (0.4,0.1) (1,0.4)};
			\draw[center, thick]
			plot [smooth] coordinates {(0,-1) (0,-0.3) (0,0) (0,0.3) (0,1)};
			
			\node[stable, below right] at (0.7,-0.25) {$W^s(x)$};
			\node[unstable, above right] at (0.7,0.25) {$W^u(x)$};
			\node[center, above] at (0,0.9) {$W^c(x)$};
			
			\draw[thick, ->,>=stealth] (1.8,0) -- (3.2,0)
			node[midway, above] {$f$};

			\draw[stable, thick]
			plot [smooth] coordinates {(4,-0.4) (4.6,-0.1) (5,0) (5.4,0.1) (6,0.4)};
			\draw[unstable, thick]
			plot [smooth] coordinates {(4,0.4) (4.6,0.1) (5,0) (5.4,-0.1) (6,-0.4)};
			\draw[center, thick]
			plot [smooth] coordinates {(5,-1) (5,-0.3) (5,0) (5,0.3) (5,1)};
			
			\node[unstable, below right] at (5.6,-0.25) {$W^u(f(x))$};
			\node[stable, above right] at (5.6,0.25) {$W^s(f(x))$};
			\node[center, above] at (5,0.9) {$W^c(f(x))$};
			
			\fill (0,0) circle (2pt);
			\node[below left] at (0,0) {$x$};
			\fill (5,0) circle (2pt);
			\node[below right] at (5,0) {$f(x)$};
			
		\end{tikzpicture}
		\caption{\label{fig.variedadesinv} Local geometrical picture of a partially hyperbolic diffeomorphism. In some cases, like those treated in Theorems~\ref{teo.main} and \ref{main.teob} the bundle $E^c$ also uniquely integrates.}
	\end{figure}
	
	The topological condition appearing in the statement of Theorem~\ref{main.technical} is the following. 
	
	\begin{definition}
		Let $f:M\to M$ be a partially hyperbolic diffeomorphism. We say that $f$ has \emph{minimal stable (resp. unstable) foliation} if every leaf $\cW^s(x)$ (resp. $\cW^u(x)$) is dense in $M$. 
	\end{definition}
	
	\subsection{Subordinated partitions}
	
	Let us discuss now some ergodic theory of a map $f$ as above. We denote by $\cp$ the space of ergodic $f$-invariant measures endowed with the weak$^{\star}$-topology. Fix, for the discussion in the sequel, an element $\mu\in\cp$. 
	
	Take $\xi$ a measurable partition of $M$. Let $\xi(x)$ denote the atom of the partition containing the point $x$. We denote by $\{\mu^{\xi}_x\}_{x\in M}$ the associated family of conditional measures given by Rokhlin's theorem \cite{EinsWar}. 
	
	An important example of application of this is when we consider a foliated box $B\subset M$ with respect to $\cW^u$, i.e. a neighbourhood in $M$ in restriction to which the patition into plaques - connected components of $\cW^u(x)\cap B$ -  is homemorphic to the decomposition of $\R^d$ into affine subspaces of dimension $\dim E^u$. In this case, the plaques of the foliation form a measurable partition of $B$, with respect to $\mu|_{B}$, and we can therefore define the conditional measures $\{\mu^u_{B,x}\}_{x\in B}$ which are supported on the plaques. 
	
	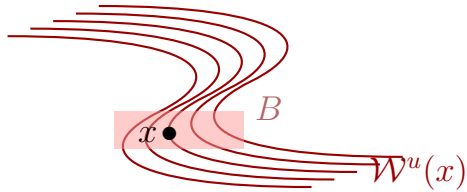
\begin{figure}[h!]
		\centering
		\begin{tikzpicture}
			\draw[red!60!black, thick] (-2,1).. controls (.5,1) and (1,.5) .. (0,0) .. controls (-1,-.5) and (-.5,-1) .. (2,-1) node[right]{$\cW^u(x)$};
			\begin{scope} [xshift=-.3cm, yshift=-0.1cm]
				\draw[red!60!black, thick] (-2,1).. controls (.5,1) and (1,.5) .. (0,0) .. controls (-1,-.5) and (-.5,-1) .. (2,-1);
			\end{scope}
			\begin{scope} [xshift=-.6cm, yshift=-0.2cm]
				\draw[red!60!black, thick] (-2,1).. controls (.5,1) and (1,.5) .. (0,0) .. controls (-1,-.5) and (-.5,-1) .. (2,-1);
			\end{scope}
			\begin{scope} [xshift=.3cm, yshift=0.1cm]
				\draw[red!60!black, thick] (-2,1).. controls (.5,1) and (1,.5) .. (0,0) .. controls (-1,-.5) and (-.5,-1) .. (2,-1);
			\end{scope}
			\begin{scope} [xshift=.6cm, yshift=0.2cm]
				\draw[red!60!black, thick] (-2,1).. controls (.5,1) and (1,.5) .. (0,0) .. controls (-1,-.5) and (-.5,-1) .. (2,-1);
			\end{scope}
			\fill[red!40!white, opacity=.5] (-1.2,-.7) rectangle (0.5,-0.2);
			\draw (-.47,-.5) node{$\bullet$};
			\draw (-.47,-.5) node[left]{$x$};
			\draw[red!50!black, opacity =.6] (0.5,-0.15) node[right]{$B$};
		\end{tikzpicture}
		\caption{\label{fig.rokhlin}For a Borel set $A\subset B$ we have $\mu(A)=\int_M \mu_{B,x}^u(A)\, d\hat\mu(x)$, where $\hat{\mu}$ is the quotient measure on the quotient space under the partition into plaques.}
	\end{figure}
	
	\begin{definition}
		We say that $\mu\in\cp$ is a $u$-Gibbs measure if, for every foliated box $B\subset M$, for $\mu$ almost every $x\in B$, we have $\mu^u_x<<\operatorname{Leb}^u_x$, where $\operatorname{Leb}^u_x$ stands for the inner volume measure on the leaf, defined with the inherited Riemannian structure.   
	\end{definition}
	
	Another approach towards conditional measures on unstable manifolds is to consider the following notion.
	
	\begin{definition}[Subordinated Partition]\label{def:subordinated}
		
		Given $\delta>0$, a measurable partition $\xi$ is said to be $\delta$-subordinated to the unstable foliation $\cW^u$ with respect to the measure $\mu$ if
		\begin{enumerate}
			
			\item For $\mu$-almost every $x\in M$ there is a number $r(x)>0$ such that $\cW_{r(x)}^u(x)\subseteq\xi(x)$.
			
			\item For $\mu$-almost every $x\in M$ we have $\xi(x)\subseteq \cW_{\delta}^u(x)$.
			
			\item $\bigvee_{n=0}^{+\infty}f^{-n}(\xi)$ is the point partition of $M$.
			
			\item $\xi$ is increasing: $\xi\prec f^{-1}(\xi)$.
			
		\end{enumerate}
		
	\end{definition}
	
	The main result of \cite{LedStr} implies the following. See also \cite{Brown_ensaios}, Appendix D.
	
	\begin{theorem}\label{teo.subordinate}
		If $f:M\to M$ is partially hyperbolic and $\mu\in\cp$, then, for every $\delta>0$, there exists a partition $\xi$ which is $\delta$-subordinated to the unstable foliation with respect to $\mu$.
	\end{theorem}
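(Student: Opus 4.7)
The plan is to carry out the classical Ledrappier--Young style construction, as presented in Appendix~D of \cite{Brown_ensaios}. The strategy is to build an initial measurable partition $\eta_0$ whose atoms are small pieces of local unstable plaques, and then take $\xi$ to be the join of its forward iterates under $f$.

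First, fix $\delta>0$ and choose a finite cover of $M$ by open foliated boxes $V_1,\ldots,V_N$ for $\cW^u$, each of diameter less than $\delta/3$. A standard regularization argument lets us arrange $\mu(\partial V_i)=0$ for every $i$. Set $U_1=V_1$ and $U_i=V_i\setminus(V_1\cup\cdots\cup V_{i-1})$, so that $\{U_i\}$ is a measurable partition of $M$. Refine each $U_i$ by the partition of $V_i$ into local unstable plaques. The resulting measurable partition $\eta_0$ has all its atoms contained in unstable plaques of diameter less than $\delta/3$.

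Second, define
\[
\xi \eqdef \bigvee_{n=0}^{\infty} f^n(\eta_0).
\]
The increasing property (iv) is immediate: $f^{-1}(\xi)=f^{-1}(\eta_0)\vee\xi\succ\xi$. The diameter bound (ii) of Definition~\ref{def:subordinated} is also immediate, since $\xi(x)\subseteq \eta_0(x)$ and atoms of $\eta_0$ sit in plaques of diameter less than $\delta/3$. To verify (iii), observe that
\[
\bigvee_{n=0}^{\infty} f^{-n}(\xi) \;=\; \bigvee_{k\in\Z} f^k(\eta_0),
\]
and each atom of the right-hand side at $x$ is a subset of $\cW^u(x)$ because atoms of $\eta_0$ lie in unstable plaques and $f$ preserves unstable leaves. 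For $k\to -\infty$, the atom $f^k(\eta_0(f^{-k}(x)))$ is a backward-iterated image of a plaque of diameter $<\delta/3$, so its inner unstable diameter is at most of order $\|Df^{|k|}|_{E^u}\|^{-1}\cdot(\delta/3)$, which tends to zero. Thus the intersection collapses to $\{x\}$ and the joint partition is the point partition $\mu$-a.e.

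The delicate step, and the main obstacle, is to prove condition (i): the existence of $r(x)>0$ with $\cW^u_{r(x)}(x)\subseteq \xi(x)$ for $\mu$-a.e.\ $x$. This requires a uniform-in-$n$ lower bound on the inner unstable distance from $f^{-n}(x)$ to the boundary of its $\eta_0$-atom. I would obtain this via a Borel--Cantelli argument: using the local product structure of each foliated box together with $\mu(\partial V_i)=0$, one shows that the set $\{y\in M:\operatorname{dist}^u(y,\partial^u\eta_0(y))<\e\}$ has $\mu$-measure tending to zero with $\e$ at a definite rate. Combining this with the exponential unstable expansion rate $\lambda>1$ of $f$, summability of $\sum_n \mu\{y:\operatorname{dist}^u(y,\partial^u\eta_0(y))<c\lambda^{-n}\}$ yields, for $\mu$-a.e.\ $x$, a constant $c(x)>0$ with $\operatorname{dist}^u(f^{-n}(x),\partial^u\eta_0(f^{-n}(x)))\geq c(x)\lambda^{-n}$ for all $n\geq 0$. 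Pushing this back by $f^n$, whose restriction to the unstable direction expands by at least $\lambda^n$, produces a uniform lower bound $r(x)>0$ such that $\cW^u_{r(x)}(x)\subseteq f^n(\eta_0(f^{-n}(x)))$ for every $n$, hence $\cW^u_{r(x)}(x)\subseteq\xi(x)$, completing the verification.
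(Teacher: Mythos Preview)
Your proposal is correct and follows exactly the construction the paper defers to: the paper does not give its own proof of Theorem~\ref{teo.subordinate} but simply cites \cite{LedStr} and Appendix~D of \cite{Brown_ensaios}, and your sketch is precisely the Ledrappier--Strelcyn/Ledrappier--Young construction as presented in those references. One small point worth tightening: the condition $\mu(\partial V_i)=0$ alone does not yield the ``definite rate'' needed for the Borel--Cantelli step; in the standard argument one chooses the radii of the covering balls so that $\mu\{y:\operatorname{dist}(y,\partial V_i)<\e\}\leq C\e$ (possible for Lebesgue-a.e.\ radius by Fubini), and this quantitative transversality is what makes $\sum_n \mu\{y:\operatorname{dist}^u(y,\partial^u\eta_0(y))<c\lambda^{-n}\}$ summable.
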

	
	Subordinated partitions will play a very important role in one of the constructions of this paper. 
	
	A consequence of \cite{LedStr} is that in order to speak about $u$-Gibbs measures one may choose to work with subordinated partitions instead of with foliated boxes. 
	
	\begin{proposition}
		If $f:M\to M$ is partially hyperbolic then $\mu\in\cp$ is a $u$-Gibbs measure if, and only if, the conditional measures $\{\mu^{\xi}_x\}_{x\in M}$ with respect to some subordinated partition satisfy $\mu^{\xi}_x<<\operatorname{Leb}^u_x|_{\xi(x)}$. 
	\end{proposition}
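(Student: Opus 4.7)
The plan is to view both conditions as expressing the same intrinsic property: that the conditionals of $\mu$ along unstable leaves are absolutely continuous with respect to leafwise Lebesgue. Both implications will follow from Rokhlin's transitivity of disintegration, combined with the observation that atoms of a subordinated partition and plaques of a foliated box all lie inside unstable leaves and therefore admit the same reference measures.

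For the forward implication, I fix a $\delta$-subordinated partition $\xi$ and choose a countable cover of $M$ by foliated boxes $\{B_n\}$ whose plaques have intrinsic diameter larger than $2\delta$; this guarantees that for $\mu$-a.e.\ $x\in B_n$ the atom $\xi(x)\cap B_n$ lies inside a single plaque $P_n(x)$. Disintegrating $\mu|_{B_n}$ first along the plaque partition and then refining along $\xi$, the transitivity of conditional measures identifies $\mu^{\xi}_x|_{B_n}$, up to a normalization constant, with the restriction of the plaque conditional $\mu^u_{B_n,x}$ to $\xi(x)\cap B_n$. Since the $u$-Gibbs hypothesis gives $\mu^u_{B_n,x}\ll\Leb^u_x|_{P_n(x)}$, restriction preserves absolute continuity, and taking the union over the cover yields $\mu^{\xi}_x\ll\Leb^u_x|_{\xi(x)}$.

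For the reverse implication, I use $f$-invariance to transport absolute continuity across iterates. Because $\xi\prec f^{-1}(\xi)$, the partitions $f^n(\xi)$ are coarser than $\xi$ for $n\geq 0$, and their conditionals satisfy $\mu^{f^n\xi}_x=f^n_{*}\mu^{\xi}_{f^{-n}(x)}$ modulo normalization; since $f^n$ acts smoothly between unstable leaves, its push-forward preserves absolute continuity with respect to $\Leb^u$, so $\mu^{f^n\xi}_x\ll\Leb^u_x|_{f^n\xi(x)}$. The inner inclusion $\xi(y)\supset\cW^u_{r(y)}(y)$ combined with uniform unstable expansion of $f$ shows that the atoms $f^n(\xi)(x)$ exhaust $\cW^u(x)$ as $n\to\infty$, so for any foliated box $B$ and $\mu$-a.e.\ $x\in B$, for $n=n(x)$ sufficiently large the atom $f^n(\xi)(x)$ contains the plaque $P(x)$. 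A second application of transitivity, now between the plaque partition of $B$ and $f^n\xi|_B$, then yields $\mu^u_{B,x}\ll\Leb^u_x|_{P(x)}$.

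The most delicate point in both directions is the geometric choice that nests the atoms of one partition inside the atoms of the other modulo $\mu$-null sets, together with the bookkeeping of the normalization constants coming from Rokhlin's transitivity; this is exactly where the defining size bounds and the increasing property of Definition~\ref{def:subordinated} are used.
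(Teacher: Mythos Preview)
Your argument is correct. Note, however, that the paper does not give its own proof of this proposition: it is stated as ``a consequence of \cite{LedStr}'' and left at that. So there is no paper-proof to compare against; you have supplied a self-contained argument where the paper defers to a reference.

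That said, it is worth observing that the ingredients you assemble for the reverse implication are exactly the tools the paper develops later, in Section~\ref{sec.leafwise}, for the construction of leafwise measures: the push-forward identity $\mu^{f^n\xi}_x=f^n_*\mu^{\xi}_{f^{-n}(x)}$ is Lemma~\ref{lem.partiiterado}, the exhaustion of the leaf by the atoms $f^n\xi(x)$ is Lemma~\ref{lem.growth}, and the transitivity computation is the content of Lemma~\ref{lem.sebafundamental}. The countability of $\xi$-atoms inside a plaque (needed so that conditioning reduces to normalized restriction) is argued explicitly in the proof of Lemma~\ref{lem.seba}. So your proof is really the same machinery the paper builds for a different purpose, applied one step earlier.

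One minor technical point to keep clean in the reverse direction: since the integer $n$ for which $P(x)\subset f^n\xi(x)$ depends on $x$, you cannot apply transitivity to a single pair of partitions on all of $B$. The fix is routine---work for each fixed $n$ on the plaque-saturated set $B_n^*=\{x\in B:P(x)\subset f^n\xi(x)\}$, obtain $\mu^u_{B,x}\ll\Leb^u_x$ there, and then take the increasing union---but it is worth making explicit.
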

	
	
	\subsection{The dynamical density function}
	
	Let $f:M\to M$ be a partially hyperbolic diffeomorphism. We shall consider the \emph{dynamical density function} defined for every $y\in\cW^u(x)$ by
	\[
	\rho^u_x(y)\eqdef\lim_{n\to\infty}\frac{\jac^u_x(-n)}{\jac^u_y(-n)}.
	\]
	\begin{remark}
		\label{rem.concicledensity}
		Notice the following cocycle property of the dynamical density function: 
		$\rho^u_{x}(y)\rho^u_{y}(z)=\rho^u_x(z)$, which follows directly from definition.
	\end{remark}
	This function is crutial in the study of $u$-Gibbs measures, as demonstrated by the following fundamental result of Ledrappier \cite{L}:
	\begin{proposition}
		\label{prop.ledra}
	Let $f:M\to M$ be a partially hyperbolic diffeomorphism. Let $\mu\in\cp$ be a $u$-Gibbs measure. Given any subordinated partition $\xi<\cW^u$, for $\mu$-almost every point $x\in M$ the Radon-Nykodim derivative 
	\[
	g^{\xi}_x\eqdef\frac{d\mu^{\xi}_x}{d\Leb^u_x}
	\]
	satisfies, for $\mu^{\xi}_x$-almost every $y\in\xi(x)$, that 
	\[
	\frac{g^{\xi}_x(y)}{g^{\xi}_x(x)}=\rho^u_x(y).
	\]
	\end{proposition}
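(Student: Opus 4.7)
The plan is to exploit the $f$-invariance of $\mu$ together with the increasing character of $\xi$ in order to iterate a one-step transformation formula for the density $g^{\xi}$, and then to pass to a limit whose error term is controlled by a Lusin-type regularization.

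The starting point is the pushforward identity for conditional measures. Since $\xi\prec f^{-1}\xi$, for every $n\geq 0$ the partition $f^n\xi$ is coarser than $\xi$, with atom $f^n\xi(x)=f^n(\xi(f^{-n}x))\supseteq\xi(x)$. The $f$-invariance of $\mu$ combined with the uniqueness of Rokhlin's disintegration yields
\[
\mu^{f^n\xi}_x \;=\; f^n_*\,\mu^{\xi}_{f^{-n}x}.
\]
Because $\mu$ is $u$-Gibbs, both sides are absolutely continuous with respect to $\Leb^u_x$, and the change of variables along unstable leaves (the Jacobian of $f^n$ at $z$ being $\jac^u_z(n)$) gives the density identity
\[
g^{f^n\xi}_x(w) \;=\; \frac{g^{\xi}_{f^{-n}x}(f^{-n}w)}{\jac^u_{f^{-n}w}(n)}.
\]

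Next, since $\xi$ refines $f^n\xi$, the conditional $\mu^{\xi}_x$ is the renormalized restriction of $\mu^{f^n\xi}_x$ to the sub-atom $\xi(x)$. The normalization therefore cancels in the ratio, and substituting the preceding formula gives
\[
\frac{g^{\xi}_x(y)}{g^{\xi}_x(x)}
\;=\;
\frac{g^{\xi}_{f^{-n}x}(f^{-n}y)}{g^{\xi}_{f^{-n}x}(f^{-n}x)}\,\cdot\,\frac{\jac^u_{f^{-n}x}(n)}{\jac^u_{f^{-n}y}(n)}.
\]
Using $\jac^u_{f^{-n}z}(n)=\jac^u_z(-n)^{-1}$, the Jacobian factor is a quotient of the backward cocycles at $x$ and $y$; by Hölder regularity of $\log\jac^u$ together with the exponential contraction of $f^{-1}$ on $E^u$, the infinite product is absolutely convergent and, in the limit $n\to\infty$, matches the defining expression of $\rho^u_x(y)$.

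The main technical obstacle is showing that the first factor $g^{\xi}_{f^{-n}x}(f^{-n}y)/g^{\xi}_{f^{-n}x}(f^{-n}x)$ tends to $1$ almost surely. The densities $g^{\xi}_z$ are only defined measurably in $z$, so a pointwise continuity argument is unavailable. I would handle this step via a Lusin-type regularization: for each $\epsilon>0$, choose a compact set $K_{\epsilon}\subseteq M$ with $\mu(K_{\epsilon})>1-\epsilon$ on which $z\mapsto g^{\xi}_z$ is uniformly continuous after a suitable local trivialization of the plaques. By Birkhoff's ergodic theorem applied to $f^{-1}$, for $\mu$-almost every $x$ the backward orbit $\{f^{-n}x\}$ visits $K_{\epsilon}$ along a subsequence of positive density. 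Along such a subsequence, the exponential contraction of $f^{-1}$ on $E^u$ yields $d^u(f^{-n}x,f^{-n}y)\to 0$, and the uniform continuity of the density on $K_{\epsilon}$ forces the ratio to tend to $1$. Letting $\epsilon\to 0$ finishes the argument, and the cocycle relation noted in Remark~\ref{rem.concicledensity} guarantees consistency of the resulting formula under changes of base point.
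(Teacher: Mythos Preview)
The paper does not supply its own proof of this proposition; it is quoted as a ``fundamental result of Ledrappier'' with a citation to \cite{L}, and is used thereafter as a black box (notably inside the proof of Proposition~\ref{prop.leafwisegibbs}). So there is no argument in the paper to compare yours against.

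Your iteration identity is correctly derived, and the convergence of the Jacobian factor is a standard bounded-distortion estimate. The genuine gap is in the Lusin step. Lusin's theorem will produce a compact set $K_\epsilon$ on which the assignment $z\mapsto g^{\xi}_z$ is continuous as a map into, say, $L^1$ of a trivialized plaque; but this is \emph{not} the same as each individual density $g^{\xi}_z(\cdot)$ being continuous on its plaque, which is what you need to conclude that $g^{\xi}_{f^{-n}x}(f^{-n}y)/g^{\xi}_{f^{-n}x}(f^{-n}x)\to 1$ from $d^u(f^{-n}x,f^{-n}y)\to 0$. A priori each $g^{\xi}_z$ is only an $L^1$ equivalence class, so even the pointwise evaluations at the specific points $f^{-n}x$ and $f^{-n}y$ require justification. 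In effect you are assuming plaquewise continuity of the density, which is essentially the conclusion you are trying to prove.

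The classical route (Ledrappier's) bypasses point evaluations entirely: one writes down the candidate conditionals $\nu^{\xi}_x\eqdef c(x)\,\rho^u_x\,d\Leb^u_x$ with $c(x)$ the normalizing constant on $\xi(x)$, checks by direct computation (using the identity relating $\rho^u_{f(x)}(f(y))$ to $\rho^u_x(y)$ via the one-step Jacobians) that $f_*\nu^{\xi}_x$ coincides with the normalized restriction of $\nu^{\xi}_{f(x)}$ to $f(\xi(x))$, and then invokes the characterization of $u$-Gibbs conditionals through the unstable entropy formula to identify $\nu^{\xi}_x$ with $\mu^{\xi}_x$. If you prefer to keep your iteration scheme, a workable fix is to replace the pointwise ratio by ratios of measures of shrinking balls and appeal to Lebesgue differentiation, but this also requires nontrivial care.
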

	This shows that the density of the conditional measures are given, almost surely and up to a measurably varying constant, by the dynamical density function. 
	
	\subsection{Jacobian Periodic Data}

   We denote by $\per$ the set of $p\in M$ such that $f^n(p)=p$, for some integer $n>0$. We denote by $\pi(p)\eqdef\min\{n>0;f^n(p)=p\}$ the period of the periodic orbit $O(p)$.
	
	Given a periodic point $p\in\operatorname{Per}(f)$, we define
	\[
	\jac^u(p)\eqdef\left(\jac^u_p(\pi(p))\right)^{1/\pi(p)}.
	\]
	Notice that $\log\jac^u(p)$ is the sum of unstable Lyapunov exponents of the measure
	\[
	\mu_p\eqdef\frac{1}{\pi(p)}\sum_{\ell=0}^{\pi(p)-1}\delta_{f^\ell(p)}.
	\] 
	\begin{definition}
		We say that a partially hyperbolic diffeomorphism $f:M\to M$ has \emph{constant unstable Jacobian periodic data} if $\jac^u(p)=\jac^u(q)$, for every $p,q\in\per$.
	\end{definition}
	Notice that constant unstable Jacobian periodic data, in our setting, is equivalent to all the measures $\{\mu_p\}_{p\in\per}$ having the same sum of unstable Lyapunov exponents. 
	
	As a consequence of Livschtiz's theorem we have the following:
	
	\begin{proposition}
		\label{prop.cohomologia}
		Let $f:M\to M$ be a $C^2$ partially hyperbolic diffeomorphism. Assume that $f$ has expanding center. Then, $f$ has constant unstable Jacobian periodic data if, and only if, there exists a constant $c\in\R$ and a Hölder continuous function $u:M\to M$ such that 
		\[
		u\circ f(x)-u(x)+c=\log\jac^u_x,
		\]
		for all $x\in M$.
	\end{proposition}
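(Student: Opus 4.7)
The ``if'' direction is routine, and I would dispose of it first: summing the coboundary equation along a periodic orbit of period $\pi(p)$, the telescoping $\sum_{\ell=0}^{\pi(p)-1}\bigl(u(f^{\ell+1}(p))-u(f^{\ell}(p))\bigr)=0$ combined with the cocycle definition of $\jac^u_p(\pi(p))$ yields $\pi(p)\,c=\log\jac^u_p(\pi(p))$, so $\jac^u(p)=e^c$ is the same for every periodic point.

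For the non-trivial ``only if'' implication, my plan is to reduce the statement to Livshitz's theorem. First I would observe that the hypothesis of expanding center means that the subbundle $E^{cu}:=E^c\oplus E^u$ is uniformly expanded, so $f$ is in fact a $C^2$ Anosov diffeomorphism with hyperbolic splitting $TM=E^s\oplus E^{cu}$. The classical theory of dominated splittings for $C^2$ systems then provides Hölder continuity of the bundle $E^u$, and hence of the function $x\mapsto\log\jac^u_x$. Setting $c:=\log\jac^u(p)$ for any periodic point $p$ --- a value which, by the constant periodic data hypothesis, does not depend on $p$ --- the Hölder continuous function $\varphi(x):=\log\jac^u_x-c$ has vanishing Birkhoff sum on every periodic orbit of $f$. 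Livshitz's theorem for transitive $C^2$ Anosov diffeomorphisms then produces a Hölder continuous $u:M\to\R$ with $u\circ f-u=\varphi$, which rearranges to the coboundary equation in the statement.

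The only point that calls for a word of care is the application of Livshitz, which in its classical form also assumes topological transitivity of the Anosov diffeomorphism. In the settings relevant to this paper --- notably the three-torus $\TT$ considered in Theorem~\ref{teo.main} and Corollary~\ref{cor.casoji} --- transitivity is automatic: Franks' theorem provides a conjugacy of $f$ with a linear hyperbolic automorphism of $\TT$, which is transitive. Beyond this verification, I expect no genuine obstacle: the Hölder regularity of $\log\jac^u$ and the Hölder regularity of the primitive $u$ are both standard consequences, respectively, of the $C^2$ hypothesis on $f$ and of Livshitz's theorem itself.
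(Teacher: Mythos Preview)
Your proposal is correct and matches the paper's approach: the paper does not give a proof at all, simply introducing the proposition with the sentence ``As a consequence of Livschtiz's theorem we have the following,'' and your argument is precisely the standard unpacking of that attribution. Your explicit flagging of the transitivity hypothesis needed for Livshitz is a point the paper glosses over, and your observation that it is satisfied in the cases actually used (Anosov on $\TT$, via Franks--Manning) is exactly the right caveat.
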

	
	When the conclusion of the above theorem holds we say that \emph{the cocycle $x\mapsto\jac^u_x$ is co-homologous to a constant}. In particular, we could have written the conclusion of Theorem~\ref{teo.main} in this language of cocycles. 
	
	\subsubsection{Joint integrability}
	
	We assume now that $M$ has dimension three, all three bundles $E^s$, $E^c$ and $E^u$ are one-dimensional, and that $f$ has expanding center. In this case $M=\TT$ and $f$ is an Anosov diffeomorphism. Notice that all three bundles in this case have to be one-dimensional. Moreover, in this case the bundle $E^c\oplus E^u$, is uniquely integrable into a two-dimensional expanding $f$-invariant foliation $\cW^{cu}$.  
	
	We define similarly as above the center periodic data: given $p\in\per$ we define 
	\[
	\jac^c(p)\eqdef\left(\jac^c_p(\pi(p))\right)^{1/\pi(p)}.
	\]
	Similarly we say that $f$ has constant center periodic data if $\jac^c(p)=\jac^c(q)$, for every $p,q\in\per$. 
	It follows from \cite{GanShi} that
	
	\begin{theorem}
		\label{teo.ganshi}
		Let $f:M\to M$ be a three dimensional partially hyperbolic Anosov diffeomorphism with expanding center. Then, $f$ has constant center periodic data if, and only if, the two dimensional bundle $E^s\oplus E^u$ is uniquely integrable.
	\end{theorem}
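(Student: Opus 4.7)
My strategy is to route both implications through a Livschitz-type coboundary description of the center Jacobian, playing the role that Proposition~\ref{prop.cohomologia} plays for the unstable Jacobian. Since $f$ is Anosov on $\TT$ and $x\mapsto\log\jac^c_x$ is Hölder continuous, the same Livschitz argument underlying Proposition~\ref{prop.cohomologia} yields that $f$ has constant center periodic data if and only if there exist a Hölder function $u:\TT\to\R$ and a constant $c\in\R$ with
\[
\log\jac^c_x=u(f(x))-u(x)+c,\qquad x\in\TT.
\]
It therefore suffices to prove that this cohomological condition is equivalent to unique integrability of $E^s\oplus E^u$.

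For the implication \emph{integrability implies constant center periodic data}, assume that $E^s\oplus E^u$ integrates into an $f$-invariant foliation $\cW^{su}$. For every periodic point $p$ of period $n$, the leaf $\cW^{su}(p)$ is $f^n$-invariant and a short center arc $I_p$ through $p$ is a local transversal. Nearby $\cW^{su}$-leaves induce a continuous holonomy between $I_p$ and any nearby center transversal, and because $\cW^{su}$ is $f$-invariant this holonomy conjugates the first-return map of $f^n$ on $I_p$ to the corresponding return dynamics on nearby transversals. By minimality of $\cW^s$ (hence density of $\cW^{su}$-leaves), for any second periodic point $q$ one can chain together a finite sequence of $\cW^{su}$-holonomies and iterates of $f$ that approximately conjugate the $f^{n}$-return map on $I_p$ to the $f^{\pi(q)}$-return map on $I_q$; tracking derivatives along the chain and passing to the limit forces $\jac^c(p)=\jac^c(q)$.

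For the converse, use the coboundary $u$ to re-parameterize each center leaf by a Hölder arc-length $\tau$ obtained by integrating $e^{-u}$ against Riemannian length along the leaf. In this parameterization, $f$ acts on each center leaf as a map of constant slope $e^{c}$. Next, consider in the $\tau$-coordinate the discrepancy $\phi(x)$ between the center-shift obtained by traveling from $x$ first along $\cW^s$ and then along $\cW^u$ back to the original center leaf, versus the shift obtained by reversing the order. A direct computation using invariance of the splitting under $Df$ gives the transformation rule $\phi\circ f=e^c\,\phi$, so $\phi$ is a Hölder eigenfunction of an expanding cocycle over an Anosov base; because $\phi$ vanishes identically on each periodic orbit (by $f^n$-invariance of the center-shift and the constancy already built into the parameterization), a standard Livschitz-closing argument forces $\phi\equiv 0$. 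Vanishing of $\phi$ is precisely the condition that every local stable--unstable accessibility class is a $2$-dimensional embedded submanifold, which integrates into the sought foliation $\cW^{su}$.

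The main obstacle is this second direction. Because $u$ is only Hölder, the parameterization $\tau$ and the discrepancy $\phi$ are only Hölder, and all manipulations must be carried out without appealing to smoothness of stable or unstable holonomies. The delicate step is defining $\phi$ globally and verifying $\phi\circ f=e^c\phi$: the representatives of the ``stable then unstable'' and ``unstable then stable'' paths have to be chosen consistently at every scale, and one must exploit simultaneously the stable contraction, the unstable expansion, and the cohomological equation to ensure cancellation in the limit. Making this precise is where the Gan--Shi argument concentrates its work, and where I would expect to spend the bulk of the effort.
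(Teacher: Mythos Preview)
The paper does not give a proof of this statement; it is quoted as a result of Gan--Shi \cite{GanShi}. So there is no in-paper argument to compare your sketch against, and the question is whether your outline could be completed into a proof.

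In the direction \emph{integrability $\Rightarrow$ constant center data} there is a genuine gap. The $\cW^{su}$-holonomy between center transversals is, a priori, only a homeomorphism: the foliation $\cW^{su}$ has merely continuous charts, and neither the stable nor the unstable holonomy between center leaves is known to be $C^1$ in this generality. A $C^0$ conjugacy between one-dimensional expanding maps imposes no relation whatsoever on their fixed-point multipliers, so ``tracking derivatives along the chain and passing to the limit'' is exactly the step that fails. The arguments in the literature go instead through the Franks--Manning conjugacy $h$ with the linearization and the quasi-isometric control of center leaves on the universal cover; integrability of $E^s\oplus E^u$ is what allows one to match the strong unstable foliations under $h$, and from that one reads off the center multipliers.

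For the converse, your instinct to build a scalar obstruction $\phi$ satisfying $\phi\circ f=e^{c}\phi$ is reasonable, but the object you describe is not a function of $x$ alone: the discrepancy between ``stable-then-unstable'' and ``unstable-then-stable'' depends on the lengths of the two legs, and there is no canonical way to close an $su$-quadrilateral back onto $\cW^c(x)$ without already assuming integrability. (Incidentally, once a bounded continuous $\phi$ with $\phi\circ f=e^{c}\phi$ and $e^{c}>1$ is in hand, taking sup-norms gives $\|\phi\|_\infty=e^{c}\|\phi\|_\infty$ and hence $\phi\equiv 0$ immediately; the Livschitz detour through periodic orbits is superfluous.) You yourself note that defining $\phi$ globally and verifying the transformation law is ``where the Gan--Shi argument concentrates its work'' --- which is accurate, but it means what you have written is a description of the shape of the proof rather than the proof itself.
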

	
	\begin{definition}
		When the two dimensional bundle $E^s\oplus E^u$ is uniquely integrable we say that $f$ is \emph{jointly integrable}.	
	\end{definition}
	
	Combining Gan-Shi's result \cite{GanShi} with the arguments in \cite{GG} we obtain the following result.
	
	\begin{proposition}
		\label{prop.jointlyintegrable}
		Let $f:M\to M$ be a three dimensional partially hyperbolic Anosov diffeomorphism with expanding center. Assume further that $f$ is jointly integrable and has constant unstable periodic data. Then, $f$ is conjugated to a linear Anosov map $L:\TT\to\TT$ by a conjugacy $h$ which is $C^{1+\alpha}$ in restriction to the center unstable manifold $\cW^{cu}$.  
	\end{proposition}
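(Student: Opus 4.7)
The plan is to promote joint integrability into matching periodic data via Gan--Shi, and then apply the Livschitz--Journé strategy to upgrade the topological Franks--Manning conjugacy to $C^{1+\alpha}$ regularity along the center-unstable foliation. First, applying Theorem~\ref{teo.ganshi} together with the joint integrability hypothesis yields that $f$ has constant center periodic data, in addition to the assumed constant unstable periodic data. Proposition~\ref{prop.cohomologia} then provides a Hölder function $u_u$ and a constant $c_u\in\R$ such that $u_u\circ f - u_u + c_u = \log\jac^u$; since the center bundle is one-dimensional and uniformly expanding, the same Livschitz-type argument produces analogous Hölder $u_c$ and constant $c_c\in\R$ satisfying $u_c\circ f - u_c + c_c = \log\jac^c$.

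Next, the Franks--Manning theorem supplies a homeomorphism $h:\TT\to\TT$ conjugating $f$ to its linearization $L$, and standard arguments show that $h$ carries the one-dimensional foliations $\cW^u_f$ and $\cW^c_f$ onto the corresponding linear foliations $\cW^u_L$ and $\cW^c_L$. Since $L$ is linear with constant Jacobians $J^u_L, J^c_L$, and since the exponential volume growth rate along a one-dimensional invariant foliation is a topological conjugacy invariant, one identifies $c_u = \log J^u_L$ and $c_c = \log J^c_L$. The conjugacy $h$ therefore intertwines, on each leaf of $\cW^u_f$ and of $\cW^c_f$, two $C^2$ expanding one-dimensional dynamics whose Jacobian periodic data now match; by the classical smooth rigidity result of de la Llave--Marco--Moriyon \cite{LlaveMarcoMoriyon1987}, the restriction of $h$ to each such leaf is $C^{1+\alpha}$. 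Finally, since every center-unstable leaf is a $C^2$ surface carrying the two transverse one-dimensional foliations $\cW^u$ and $\cW^c$, Journé's regularity lemma promotes this leafwise $C^{1+\alpha}$ regularity in two transverse directions to $C^{1+\alpha}$ regularity of $h$ along the entire two-dimensional foliation $\cW^{cu}$, precisely as carried out in the arguments of \cite{GG}.

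The main obstacle I anticipate is the identification of the cohomology constants $c_u, c_c$ with the Lyapunov exponents of $L$: Franks--Manning only provides a topological conjugacy a priori, so one must recover these quantitative invariants from purely topological data, via the volume-growth interpretation of the Jacobian cohomology constant along a one-dimensional invariant foliation. Once this matching is secured, the remainder is the well-understood passage from matching constant periodic data to $C^{1+\alpha}$ conjugacy, combining one-dimensional Livschitz-type smoothness with Journé's lemma.
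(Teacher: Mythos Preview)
The paper does not supply its own proof of this proposition; it is stated immediately after the sentence ``Combining Gan--Shi's result \cite{GanShi} with the arguments in \cite{GG} we obtain the following result,'' and no further argument is given. Your outline is precisely an unpacking of that combination: Theorem~\ref{teo.ganshi} converts joint integrability into constant center periodic data, and the remainder (Livschitz to get a Hölder transfer function, construction of the leafwise derivative of the Franks--Manning conjugacy, and Journé's lemma to pass from the two transverse one-dimensional foliations to the two-dimensional $\cW^{cu}$) is the content of \cite{GG}. So your approach matches what the paper intends.

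Two small remarks. First, the step ``$h$ carries $\cW^c_f$ onto $\cW^c_L$'' is genuinely nontrivial: the Franks--Manning conjugacy automatically respects the full stable and unstable (here $=\cW^{cu}$) foliations, but matching the \emph{sub}-foliation $\cW^c$ inside $\cW^{cu}$ with the linear weak foliation is one of the substantive points of the Gogolev-type arguments in \cite{GG}, not merely a ``standard argument.'' Second, your invocation of \cite{LlaveMarcoMoriyon1987} is slightly indirect: that paper treats two-dimensional Anosov systems, whereas here one works leaf by leaf on one-dimensional expanding foliations. The cleaner route is to use the cohomological equation directly to write down a candidate Hölder function for $\log|Dh|_{E^\star}|$ (namely $-u_\star$ up to an additive constant) and then verify it is the derivative; this is how \cite{GG} proceeds. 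The identification $c_\star=\log J^\star_L$ that you flag as the main obstacle follows, for instance, from the conjugacy invariance of topological entropy together with the fact that when $\log\jac^\star$ is cohomologous to a constant, that constant equals $\int\log\jac^\star\,d\nu$ for \emph{every} invariant measure $\nu$.
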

	
	Recall that two diffeomorphisms $f:M\to M$ and $g:N\to N$ are said to be conjugate if there exists a homeomorphism $h:M\to N$ such that $g\circ h=h\circ f$. We stress that the existence of the conjugacy in the above result is a classical fact, following from \cite{Franks} and \cite{casanova}. The non-trivial statement is the smoothness of the conjugacy in restriction to the center-unstable foliation $\cW^{cu}$.
	
	\subsection{Unstable entropy}
	
	We return now to the more general setting where $f:M\to M$ is a partially hyperbolic diffeomorphism, with no assumptions on the dimension of the invariant sub-bundles nor on the behaviour of $f$ along the center. 
	
	Given $y\in\cW^u(x)$ define the \emph{dynamical unstable distance} by
	\[
	d^u_n(x,y)=\max_{0\leq j\leq n-1}d^u(f^jx,f^jy),
	\] 
	where $d^u(.,.)$ is the induced inner Riemannian metric on the leaf $\cW^u(x)$. We say that a set $E\subset\cW^u_{\delta}(x)$ is $(u,n,\eps)$-separated if for every $y,\hat y\in E$ one has $d^u_n(y,\hat y)>\eps$. Denote by $N^u(\eps,x,\delta,n)$ the maximal possible cardinality of a $(u,n,\eps)$-separated set within $\cW^u_{\delta}(x)$. 
	
	\begin{definition}
		The \emph{unstable topological entropy} is defined by
		\[
		h^u(f)\eqdef\lim_{\delta\to 0}\sup_{x\in\TT}\lim_{\eps\to 0}\limsup_{n\to\infty}\frac{1}{n}\log N^u(\eps,x,\delta,n).
		\]
	\end{definition}  
	
	Although we do not use in our work, it is interesting to remark that in \cite{HHW} the authors prove that 
	\[
	h^u(f)=\sup_{x\in M}\limsup_{n\to\infty}\frac{1}{n}\log\Leb^u_x\left(f^n(\cW^u_{\delta}(x))\right),
	\]
	for every $\delta>0$. See Theorem C in \cite{HHW}.
	
	We now turn to the measure-theoretical version of the above concept. First, recall that given $\xi,\eta$ measurable partitions, the conditional entropy with respect to a measure $\mu$ is the quantity
	\[
	H_{\mu}(\xi|\eta)\eqdef-\int_M\log(\mu^{\eta}_x(\xi(x)))d\mu(x)
	\] 
	Given $\xi<\cW^u$ and $\eta<\cW^u$ two subordinate partitions one can show from the work of Ledrappier-Young \cite{LYI} that  $H_{\mu}(\xi|f\xi)=H_{\mu}(\eta|f\eta)$. A sketch of proof can be found in \cite{Ali}, page 84 (with precise references to \cite{LYI}). This allows us to define to following key notion.
	
	\begin{definition}
		Given $\xi<\cW^u$ the \emph{unstable metric entropy} of the system $(f,\mu)$ is the number $h_{\mu}^u(f)\eqdef H_{\mu}(\xi|f\xi)$	
	\end{definition}	
	
	\begin{remark}
		The proofs of \cite{LedStr,L} show that an element	$\mu\in\cp$ is an $u$-Gibbs measure if, and only if $h^u_{\mu}(f)=\int\log\|Df(x)|_{E^u}\|d\mu(x)$.	
	\end{remark}

	In \cite{HHW}, Theorem D, the authors establish the following unstable variational principle.
	
	\begin{theorem}[Hu-Hua-Wu]
		$h^u(f)=\sup\{h^u_{\mu}(f);\mu\in\cp\}$
	\end{theorem}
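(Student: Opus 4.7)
The plan is to adapt the classical Walters--Misiurewicz argument for the topological variational principle to the unstable setting, proving the two inequalities separately. The supremum is clearly attained by ergodic measures (since the extreme points of the simplex of invariant measures are ergodic), so it suffices to work with $\mu\in\cp$.

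For the upper bound $h^u_\mu(f)\leq h^u(f)$, I would first establish an unstable Brin--Katok formula. Given a subordinated partition $\xi<\cW^u$ with $\xi(x)\subset \cW^u_\delta(x)$, the Shannon--McMillan--Breiman theorem applied to the increasing partition $f^{-n}\xi\vee\cdots\vee\xi$ and the identity $H_\mu(\xi|f\xi)=h^u_\mu(f)$ give
\[
h^u_\mu(f)=\lim_{n\to\infty}-\tfrac{1}{n}\log\mu^{\xi}_x\bigl(\xi^{(n)}(x)\bigr),
\]
for $\mu$-a.e. $x$, where $\xi^{(n)}=\bigvee_{k=0}^{n-1}f^{-k}\xi$. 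A standard comparison shows that the $\xi^{(n)}$-atom containing $x$ is sandwiched between unstable Bowen balls $B^u_n(x,\varepsilon)\cap\xi(x)$, so that
\[
h^u_\mu(f)=\lim_{\varepsilon\to 0}\limsup_{n\to\infty}-\tfrac{1}{n}\log\mu^\xi_x(B^u_n(x,\varepsilon)).
\]
Since any $(u,n,\varepsilon)$-separated set in $\cW^u_\delta(x)$ provides disjoint dynamical balls $B^u_n(\cdot,\varepsilon/2)$, one obtains $\mu^\xi_x(B^u_n(x,\varepsilon/2))\gtrsim 1/N^u(\varepsilon/2,x,\delta,n)$ on a set of positive $\mu$-measure, and Fatou gives $h^u_\mu(f)\leq h^u(f)$.

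For the lower bound, I use a Misiurewicz-type construction. Fix $\delta,\varepsilon>0$ and $x_0\in M$ realising the sup in the definition of $h^u(f)$; let $E_n\subset\cW^u_\delta(x_0)$ be a maximal $(u,n,\varepsilon)$-separated set of cardinality $N_n$, and form
\[
\sigma_n=\tfrac{1}{N_n}\sum_{y\in E_n}\delta_y,\qquad \mu_n=\tfrac{1}{n}\sum_{k=0}^{n-1}f^k_*\sigma_n.
\]
Extract a weak-$\ast$ subsequential limit $\mu_n\to\mu$. One then chooses a subordinated partition $\xi$ whose boundary has zero $\mu$-measure and, following Misiurewicz's chunking argument applied to the refined partitions $\xi^{(n)}$, estimates
\[
H_{\mu}(\xi^{(n)})\geq \log N_n - o(n),
\]
using concavity of $H$ and the fact that $\sigma_n$ assigns mass $1/N_n$ to each $\xi^{(n)}$-atom it meets (by the separation property). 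Dividing by $n$, passing to the limit, and using an ergodic decomposition yields an ergodic $\mu'\in\cp$ with $h^u_{\mu'}(f)\geq\limsup\tfrac{1}{n}\log N_n$, which after taking $\varepsilon\to 0$ and $\delta\to 0$ gives the desired inequality.

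The main obstacle is the passage between the intrinsic unstable geometry (used in the definition of $h^u(f)$, which lives on entire unstable leaves) and the measure-theoretic side (defined via conditionals on atoms of subordinated partitions, which are fragments of leaves). Specifically, (i) proving the unstable Brin--Katok formula requires comparing $\xi^{(n)}$-atoms to unstable Bowen balls, and one must control the diameter of $\xi^{(n)}(x)$ in the $d^u_n$-metric without losing measure; (ii) in the Misiurewicz step, one must ensure the boundaries of the chosen subordinated partition are null for the limiting measure $\mu$, which is delicate because $\xi$ depends on $\mu$ a priori. Both issues are resolved by the Ledrappier--Young framework (as in \cite{LYI,LedStr}), which is the technical backbone of the argument.
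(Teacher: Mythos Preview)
The paper does not prove this theorem at all: it is quoted as an established result from the literature, with the attribution ``In \cite{HHW}, Theorem D, the authors establish the following unstable variational principle,'' and no argument is given. So there is no proof in the paper to compare your proposal against.

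Your sketch is a reasonable outline of the strategy that Hu--Hua--Wu in fact use in \cite{HHW}: the upper bound via an unstable Brin--Katok type comparison with Bowen balls on leaves, and the lower bound via a Misiurewicz construction starting from maximal $(u,n,\varepsilon)$-separated sets on a single unstable plaque. The obstacles you flag (comparing atoms of $\xi^{(n)}$ to unstable Bowen balls, and controlling boundaries of partitions for the limit measure) are precisely the technical points handled in that reference. One comment: in the lower bound, the partition used to estimate entropy from below is typically a finite Borel partition of $M$ with small-diameter atoms and $\mu$-null boundary, not a subordinated partition; the subordinated partition enters only to identify the resulting conditional entropy $H_\mu(\alpha^{(n)}\mid f\xi)$ with $h^u_\mu(f)$. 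Writing $H_\mu(\xi^{(n)})$ as you do is not quite the right object, since $\xi$ is already uncountable.
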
	
	
	This result leads us to consider measures which attains the maximum value of unstable metric entropy.
	
	\begin{definition}
		We say that $\mu\in\cp$ is a \emph{measure of maximal unstable entropy} if $h^u_{\mu}(f)=h^u(f)$. 
	\end{definition}
	
	\subsection{Systems of unstable measures}
	
	In this work we can prove a general rigidity theorem under a very flexible assumption which we now describe. So, let $f:M\to M$ be a partially hyperbolic diffeomorphism. 
	
	\begin{definition}
		We say that a measure $\mu\in\cp$ admits a \emph{Margulis system of unstable measures} if there exists a measurably varying family $\{m^u_x\}_{x\in M}$ of locally finite Borel measures on the leaf $\cW^u(x)$ such that
		\begin{enumerate}
			\item If $y\in\cW^u(x)$ then $m^u_x=m^u_y$.
			\item For every foliated box $B\subset M$ there exists a constant $C>0$ such that the disintegration $\{\mu^B_x\}_{x\in B}$ of $\mu$ with respect to the plaques of $\cW^u(x)\cap B$ satisfies
			\[
			m^u_x|_{B\cap\cW^u(x)}=C\mu^B_x,
			\]
			for $\mu$-almost every $x\in B$. 
			\item There exists $\lambda>0$ such that $m^u_x\circ f=e^{\lambda}m^u_{f^{-1}(x)}$.
		\end{enumerate}  
	\end{definition}
	
	With this definition at hand we can re-state our main technical result. 
	
	\begin{theorem}
		\label{teo.maintechnical}
		Let $f:M\to M$ be a partially hyperbolic diffeomorphism. Assume that there exists some $u$-Gibbs measure $\mu\in\cp$ which admits a Margulis system of unstable measures. Then, $f$ has constant unstable Jacobian periodic data.  
	\end{theorem}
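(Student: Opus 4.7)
The plan is to compare two independent transformation laws satisfied by the leafwise unstable measures of $\mu$: the multiplicative rule with ratio $e^\lambda$ coming from the Margulis system, and the Jacobian-ratio rule coming from Ledrappier's description of $u$-Gibbs densities. Matching them produces a cohomological identity which, evaluated along a periodic orbit, directly yields constant unstable Jacobian periodic data. The main obstacle I foresee is not the algebraic identity itself but rather the positivity of the Margulis density on every periodic leaf.

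First, I would use the construction announced in Section~\ref{sec.leafwise} to extend the subordinated conditional measures of $\mu$ to a family $\{\mu^u_x\}$ of locally finite Borel measures defined on the \emph{entire} leaf $\cW^u(x)$. Proposition~\ref{prop.ledra} together with the cocycle property of Remark~\ref{rem.concicledensity} patches the local Ledrappier formulas into the global statement $\mu^u_L = c_L\,\rho^u_{y_0}(\cdot)\,\Leb^u$ on each leaf $L=\cW^u(x)$, for a fixed base point $y_0\in L$ and a positive constant $c_L$. Conditions (1) and (2) of the Margulis system then identify $m^u_L$ with $\mu^u_L$ up to a further multiplicative leaf-constant, so globally $m^u_L = \tilde c_L\,\rho^u_{y_0}(\cdot)\,\Leb^u$.

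Next, the Margulis equivariance $m^u_x\circ f = e^\lambda m^u_{f^{-1}(x)}$ gives, via the standard pushforward formula along leaves, the density-level identity $h_{f(L)}(f(z))\,\jac^u_z = e^\lambda h_L(z)$. The transformation rule of the dynamical density cocycle, computed directly from the definition by using that $\jac^u_{f^{-n}(y_0)}/\jac^u_{f^{-n}(z)}\to 1$ as $n\to\infty$ (backward iterates on unstable leaves contract), yields that $\rho^u_{f(y_0)}(f(z))$ equals $\rho^u_{y_0}(z)$ up to an explicit ratio involving only $\jac^u_{y_0}$ and $\jac^u_z$. Substituting cancels both $\rho^u_{y_0}(z)$ and the $\jac^u_z$ appearing in the equivariance, leaving the clean leaf-level relation $\tilde c_{f(L)} = (e^\lambda/\jac^u_{y_0})\,\tilde c_L$. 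Iterating this along the orbit of $p\in\per$ of period $\pi(p)$, choosing the base point to be $p$ itself so that the intermediate base points run through the orbit of $p$, the product of the $\jac^u_{y_0}$ factors equals $\jac^u_p(\pi(p))$, the $e^\lambda$'s compound to $e^{\lambda\pi(p)}$, and since $f^{\pi(p)}(L_p)=L_p$ we return to $\tilde c_{L_p}$. Provided $\tilde c_{L_p}>0$, the telescoping forces $\jac^u_p(\pi(p)) = e^{\lambda\pi(p)}$, hence $\jac^u(p)=e^\lambda$ independently of $p$.

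The hard part will be verifying the positivity $\tilde c_{L_p}>0$ on every periodic leaf $L_p$. The transformation law above already makes $\{L:\tilde c_L>0\}$ an $f$-saturated union of unstable leaves, and it has full $\mu$-measure because $m^u_x$ is a positive constant multiple of the nonzero conditional measures $\mu^B_x$. The hypothesis of minimal unstable foliation (inherited from Theorem~\ref{main.technical}, which the restatement implicitly assumes) ensures that $\mu$ has full support and that every unstable leaf is dense, so this set is also dense; propagating positivity from a $\mu$-full set of leaves to the specific periodic leaves—which may be $\mu$-null—requires combining the measurability of the Margulis family with some form of lower semi-continuity of $\tilde c_L$ along leaves, most plausibly deduced from the continuity of $\rho^u$ on compact pieces together with the leaf-constancy in Margulis condition (1). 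I expect this positivity step, rather than the functional equation leading to it, to contain the bulk of the technical work.
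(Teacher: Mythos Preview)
Your derivation of the leaf-level transformation law $\tilde c_{f(L)}=(e^{\lambda}/\jac^u_{y_0})\,\tilde c_L$ is correct on $\mu$-almost every leaf, and the telescoping along a periodic orbit would indeed force $\jac^u(p)=e^{\lambda}$ \emph{if} the identity held on the specific leaf $\cW^u(p)$. But this is where the approach breaks, and the obstruction is more serious than the positivity issue you isolate. The very definition of $\tilde c_L$ rests on the identification $m^u_L=\tilde c_L\,\rho^u_{y_0}\,\Leb^u$, which comes from matching the Margulis conditionals with Ledrappier's formula (Proposition~\ref{prop.ledra}) for the $u$-Gibbs conditionals---and that is a $\mu$-almost-everywhere statement. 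On a periodic leaf, generically $\mu$-null, you have no a priori reason to know that $m^u_p$ is even absolutely continuous with respect to $\Leb^u$, let alone with density proportional to $\rho^u_p$; so $\tilde c_{L_p}$ is not defined, and there is nothing to telescope. Your proposed fix via lower semi-continuity cannot work under the stated hypotheses: the Margulis family is only assumed to vary \emph{measurably} in $x$, so no regularity of the leaf-density across leaves is available to pass from a $\mu$-full set to a specific periodic leaf.

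The paper circumvents this by never working directly on periodic leaves. Comparing the two transformation laws produces instead a formula for $\jac^u_x(k)$ (Lemma~\ref{lem.formulalinda}) which, combined with a Lusin/Birkhoff density argument, yields a uniform two-sided bound $\rho^u_x(y)\asymp_C 1$ on entire leaves for $\mu$-a.e.\ $x$ (Proposition~\ref{prop.main}). Because $\rho^u$ is a \emph{continuous} function satisfying the cocycle identity of Remark~\ref{rem.concicledensity}, minimality of $\cW^u$ propagates this bound from one typical dense leaf to \emph{every} leaf (Lemma~\ref{lem.davi}). Finally, for periodic $p,q$ one uses minimality again to pick $y\in\cW^u(p)$ arbitrarily close to $q$; the uniform bound on $\rho^u$ then gives $\jac^u(p)^{-nk}\asymp_{C^2}\jac^u(q)^{-nk}$ for all large $n$, forcing $\jac^u(p)=\jac^u(q)$ (Proposition~\ref{prop.paris}). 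The essential difference from your plan is that the object being transferred from typical leaves to all leaves is the continuous cocycle $\rho^u$, not the merely measurable Margulis density.
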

	
	\subsection{Proofs of the main results}
	
	In this section we show how to deduce the main theorems presented in the introduction from Theorem~\ref{teo.maintechnical} and the preliminary material we gave.
	
	\subsubsection{Proof of Theorem~\ref{teo.main}}
	We shall now show how to deduce Theorem~\ref{teo.main} from Theorem~\ref{teo.maintechnical}. So we assume that $f:\TT\to\TT$ is a $C^{\infty}$ Anosov partially hyperbolic diffeomorphism with expanding center. Let $\mu\in\cp$ be a $u$-Gibbs measure. Assume that the unstable entropy of $\mu$ equals the unstable topological entropy of $f$, i.e. $h^u_{\mu}(f)=h^u(f)$. Therefore, by Corollary 1.2 of \cite{humbert} we have that $\mu$ admits a Margulis system of unstable measures. This puts us in position to apply directly Theorem~\ref{teo.maintechnical}, deducing that $f$ has constant unstable periodic data. \qed
	
	\subsubsection{Proof of Corollary~\ref{cor.casoji}}
	Let $f:\TT\to\TT$ be a $C^{\infty}$ partially hyperbolic Anosov diffeomorphism with expanding center and jointly integrable. If $f$ admits a $u$-Gibbs measure which is also a measure of maximal unstable entropy, by Theorem~\ref{teo.main} we conclude that $f$ has constant unstable periodic data. Applying Proposition~\ref{prop.jointlyintegrable} we conclude that $f$ is conjugated to a linear Anosov map $L:\TT\to\TT$ by a conjugacy $h$ which is $C^{1+\alpha}$ in restriction to the center unstable manifold $\cW^{cu}$. The uniqueness of $u$-Gibbs measure in this case follows since a conjugacy which is $C^{1+\alpha}$ along the foliation $\cW^{cu}$ sends $u$-Gibbs measures of $f$ into $u$-Gibbs measures of the linear Anosov map $L:\TT\to\TT$ and $L$ has a unique $u$-Gibbs measure, which is the Haar measure of $\TT$. \qed

	\subsubsection{Proof of Theorem~\ref{main.teob}}
	
	The proof of Theorem~\ref{main.teob} is identical to that of Theorem~\ref{teo.main}: one only has to notice that, for $C^2$ center isometries the existence of a Margulis system of unstable measures is given in Theorem A of \cite{CRH}. \qed 
	
	The rest of this paper is devoted to the proof of Theorem~\ref{teo.maintechnical}.

	\section{Unstable leafwise measures}\label{sec.leafwise}	
	
	In this section  we will describe a general procedure to construct conditional measures supported on entire leafs, even when the partition into unstable leaves is not measurable. The inevitable drawback is that the conditional measures are no longer probabilities, but they coincide with the usual disintegration up to normalization. This general procedure is sketched in the first chapter of \cite{Brown_ensaios}. The same idea was used in \cite{ALOS} for the construction of the so-called leafwise quotient measures. Thus, our arguments follows very closely those of \cite{ALOS}.  Nevertheless we choose to give a detailed treatment, since we believe that the general construction of leafwise measures along unstable manifolds has an independent interest.
	
	In the end of the section we specialise to case of a $u$-Gibbs measure and we describe how the leafwise measure change when we push the base point under the dynamics.

	\subsection{Construction of the Leafwise Measures}
	
	Let $f:M\to M$ be a partially hyperbolic diffeomorphism. The goal of this section is to give a detailed proof of the result below.
	
	\begin{theorem}[Leafwise Measures]\label{teo:leafwise}
		For $\mu\in \cP^{\operatorname{erg}}_f(M)$, there exists a family of measures $\{\mu_x^u\}_{x\in M}$ in unstable leafs (defined for $\mu$ almost every point and depending measurably on the base point) such that If $\xi<\cW^u$ is a subordinated partition, then for $\mu$-a.e.$x\in M$
		\[
		\mu_x^\xi=\frac{\mu_x^u|_{\xi(x)}}{\mu_x^u(\xi(x))}
		\]
	\end{theorem}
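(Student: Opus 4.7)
The plan is to follow the strategy sketched in the first chapter of \cite{Brown_ensaios} and implemented in \cite{ALOS}: build $\mu_x^u$ as a renormalized limit of conditional measures on a decreasing sequence of subordinated partitions whose atoms eventually exhaust the whole unstable leaf. Fix a $\delta$-subordinated partition $\xi<\cW^u$ given by Theorem~\ref{teo.subordinate} and set $\xi_n\eqdef f^n(\xi)$ for $n\geq 0$. The increasing condition $\xi\prec f^{-1}\xi$ translates, after applying $f^n$, into $\xi_n\prec \xi_{n-1}$, so the $\xi_n$ form a decreasing sequence of partitions whose atoms $\xi_n(x)=f^n(\xi(f^{-n}x))$ lie inside $\cW^u(x)$. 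Because $f$ expands $\cW^u$ uniformly and the radius function $r(\cdot)$ in Definition~\ref{def:subordinated} is $\mu$-a.e.\ positive, the atom $\xi_n(x)$ eventually contains any prescribed unstable ball $\cW_R^u(x)$, and hence $\bigcup_{n\geq 0}\xi_n(x)=\cW^u(x)$ modulo a $\mu$-null set.

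Rokhlin's theorem provides the conditional measures $\mu_x^{\xi_n}$, and the standard compatibility under refinement gives, for $m\leq n$ and $\mu$-a.e.\ $x$,
\[
\mu_x^{\xi_m}=\frac{\mu_x^{\xi_n}|_{\xi_m(x)}}{\mu_x^{\xi_n}(\xi_m(x))}.
\]
Normalizing against the reference atom $\xi(x)=\xi_0(x)$, I define
\[
\mu_x^u\eqdef\lim_{n\to\infty}\frac{\mu_x^{\xi_n}}{\mu_x^{\xi_n}(\xi(x))}.
\]
A direct computation using the displayed consistency at levels $m$ and $n$ shows that $\mu_x^{\xi_n}(\xi_m(x))/\mu_x^{\xi_n}(\xi(x))=1/\mu_x^{\xi_m}(\xi(x))$, so the restriction of the $n$-th term to $\xi_m(x)$ is, for all $n\geq m$, equal to the fixed measure $\mu_x^{\xi_m}/\mu_x^{\xi_m}(\xi(x))$. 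Hence the limit is well defined on every atom $\xi_m(x)$, and by exhaustion on the whole of $\cW^u(x)$; it is locally finite, and measurability of $x\mapsto\mu_x^u$ is inherited from that of each $\mu_x^{\xi_n}$. Taking $m=0$ one reads off $\mu_x^u(\xi(x))=1$ and $\mu_x^u|_{\xi(x)}=\mu_x^\xi$, which proves the required identity in the special case $\eta=\xi$.

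For an arbitrary subordinated partition $\eta$, the strategy is to compare through the common refinement $\xi\vee\eta$, which is again subordinated to $\cW^u$: two successive applications of Rokhlin consistency show that the measure one would construct starting from $\eta$ (or from $\xi\vee\eta$) agrees with $\mu_x^u$ up to a positive measurable multiplicative factor, and this factor cancels in the ratio $\mu_x^u|_{\eta(x)}/\mu_x^u(\eta(x))$, yielding the stated identity for $\eta$.

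The main obstacle I anticipate is precisely this last step: verifying that $\mu_x^u$ is canonical modulo a measurable scalar, independently of the initial partition used to build it, while keeping all the normalizing constants measurable in $x$ and absorbing the countably many Rokhlin null sets into a single $\mu$-null exceptional set. This is the delicate bookkeeping at the heart of \cite{ALOS} and Appendix~D of \cite{Brown_ensaios}, which I plan to transport to the present partially hyperbolic setting essentially verbatim.
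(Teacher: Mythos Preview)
Your proposal is correct and follows essentially the same route as the paper: build $\mu_x^u$ as a renormalized limit of the conditionals $\mu_x^{\xi_n}$ along the iterated partitions $\xi_n=f^n\xi$, using exhaustion of the leaf and the Rokhlin consistency relation between successive levels. Two implementation choices differ from the paper and are worth flagging. First, you normalize by $\mu_x^{\xi_n}(\xi(x))$ while the paper normalizes by $\mu_x^{\xi_n}(J_x)$ with $J_x=\cW^u_1(x)$; your choice is slightly cleaner for the construction itself (no need to first shrink $\xi$ so that $\xi(x)\subset J_x$), but the paper's intrinsic normalization $\mu_x^u(J_x)=1$ is exploited later in Lemma~\ref{lem.formulalinda}, so you would eventually want to renormalize. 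Second, for an arbitrary subordinated partition $\eta$ the paper argues dynamically, pulling $\eta$ back by $f^{-n}$ until its atoms fit inside those of $\xi_0$ and then pushing forward via Lemma~\ref{lem.partiiterado}; your common-refinement argument through $\xi\vee\eta$ is a legitimate and arguably more transparent alternative, and the proportionality constant you need is exactly $\mu_x^{\xi}((\xi\vee\eta)(x))^{-1}$, which is measurable and positive $\mu$-a.e.\ by the same positivity lemma (Lemma~\ref{lem.superposicaolindenstrauss}) that underlies the whole construction.
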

	
	We call the measure $\mu^u_x$ the \emph{leafwise measure along unstable manifold} $\cW^u(x)$. 
	
	The outline of the proof, which, as said earlier, follows very closely the arguments in \cite{ALOS} is that we start with a subordinated partition (definition \ref{def:subordinated}) and we iterate it to obtain partitions with bigger and bigger atoms. By disintegrating the measure $\mu$, we obtain a family of measures defined in unstable leafs whose support grows arbitrarily big (Lemma \ref{lem.growth}). Using the affine parameters we can show that up to a non-stationary normalization in space, they all agree after some time. In the limit we obtain our desired family.	
	
	\subsubsection{Constructing the family $\{\mu_x^u\}_{x\in M}$}
	
	Let $\xi_0<\cW^u$ be a subordinated partition and for each $n\in\Z$ define
	\[
	\xi_n\eqdef f^n{\xi_0}
	\]
	By item (iv) of definition \ref{def:subordinated} the sequence
	\[
	\cdots\prec\xi_n\prec\cdots\prec\xi_0\prec\cdots\prec\xi_{-n}\prec\cdots
	\]
	is increasing. By Rokhlin's Theorem, we can consider the disintegration $\{\mu_x^{\xi_n}\}_{x\in M}$ of $\mu$ with respect to each of these partitions.
	
	%
	%
	
	The next lemma shows that the domain of almost every $\mu_x^{\xi_n}$ grows arbitrarily in $\cW^u(x)$:
	
	\begin{lemma}\label{lem.growth}
		For $\mu$-almost every point $x\in M$ and every $R>0$, $\exists n_0\in\N$ such that
		\[
		\cW^u_R(x)\subseteq\xi_n(x)
		\]
		for all $n>n_0$.
	\end{lemma}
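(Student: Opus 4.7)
The plan is to reduce the conclusion, for each fixed $R>0$, to finding a single backward iterate $n_0=n_0(x,R)$ with $\cW^u_R(x)\subseteq\xi_{n_0}(x)$. This already suffices, because the sequence of atoms $\{\xi_n(x)\}$ is nested and increasing: unpacking the property $\xi_0\prec f^{-1}\xi_0$ of Definition~\ref{def:subordinated} and applying the bijection $f^n$ to both sides yields $\xi_n\prec\xi_{n-1}$, that is, $\xi_{n-1}(x)\subseteq\xi_n(x)$ for every $n\ge 1$.

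To produce such an $n_0$, I will pull everything back by $f^{-n_0}$ and play two competing effects against each other. First, let $\lambda\eqdef\inf_{x\in M}m(Df(x)|_{E^u})$, which is strictly greater than $1$ by the definition of partial hyperbolicity together with the compactness of $M$. Supermultiplicativity of $m(\cdot)$ gives $m(Df^{n}|_{E^u})\ge\lambda^{n}$, hence $\|Df^{-n}|_{E^u}\|\le\lambda^{-n}$; integrating this bound along an almost-length-realizing curve inside an unstable leaf yields the length contraction
\[
f^{-n}\big(\cW^u_R(x)\big)\subseteq\cW^u_{R\lambda^{-n}}\big(f^{-n}x\big).
\]
Second, the subordination condition item~(i) of Definition~\ref{def:subordinated} provides a measurable function
\[
r(y)\eqdef\sup\{s>0:\cW^u_s(y)\subseteq\xi_0(y)\}
\]
which is positive $\mu$-a.s., so the sets $A_\epsilon\eqdef\{r\ge\epsilon\}$ exhaust $M$ in measure as $\epsilon\downarrow 0$. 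I fix any $\epsilon>0$ with $\mu(A_\epsilon)>0$.

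Applying the ergodic theorem to $f^{-1}$ (which preserves the same ergodic measure $\mu$) and to $\mathbf{1}_{A_\epsilon}$, for $\mu$-a.e.\ $x$ the set $\{n\ge 0:f^{-n}x\in A_\epsilon\}$ has positive density in $\N$; in particular it is unbounded. I then choose $n_0$ in that set large enough that $\lambda^{n_0}\ge R/\epsilon$. For that $n_0$, the length-contraction estimate gives $f^{-n_0}(\cW^u_R(x))\subseteq\cW^u_\epsilon(f^{-n_0}x)$, and the condition $f^{-n_0}x\in A_\epsilon$ gives $\cW^u_\epsilon(f^{-n_0}x)\subseteq\xi_0(f^{-n_0}x)$. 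Applying $f^{n_0}$ delivers $\cW^u_R(x)\subseteq\xi_{n_0}(x)$, and the monotonicity from the first paragraph closes the argument. I do not expect any genuine obstacle; the only point requiring minor care is the length-contraction estimate in the inner Riemannian geometry of $\cW^u(x)$, which is the standard passage from pointwise hyperbolicity of $Df$ to metric hyperbolicity along leaves.
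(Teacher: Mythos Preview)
Your proof is correct and follows essentially the same strategy as the paper: define the sets $A_\epsilon=\{x:\cW^u_\epsilon(x)\subseteq\xi_0(x)\}$, use uniform expansion along $E^u$ to show that a backward visit $f^{-n}x\in A_\epsilon$ with $n$ large forces $\cW^u_R(x)\subseteq\xi_n(x)$, and then argue that a.e.\ point has such a visit. The only minor differences are that you invoke Birkhoff's ergodic theorem where the paper uses just $f$-invariance of $\mu$, and that you make explicit the monotonicity $\xi_{n-1}(x)\subseteq\xi_n(x)$ to reduce to a single $n_0$, a step the paper leaves implicit.
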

	\begin{proof}
		For $\varepsilon>0$ define the set
		\[
		A(\varepsilon)\eqdef\{x\in M|\cW^u_\varepsilon(x)\subseteq\xi(x)\}
		\]
		If $x\in\TT$ is such that $f^{-n}(x)\in A(\frac{1}{k})$, then, 
		\[
		f^n\cW^u_\frac{1}{k}(f^{-n}(x))\subseteq f^n(\xi(f^{-n}(x))).
		\]
		Consider $\beta\eqdef\inf_{x\in M}\|Df(x)|_{E^u}\|$. Partial hyperbolicity of $f$ gives that $\beta>1$. Thus,
		\[
		\cW^u_{\frac{\eta^n}{k}}(x)\subseteq\xi_n(x)
		\]
		Hence, if $n>\log_{\beta}(kR)$ we have that $\cW^u_R(x)\subseteq\xi_n(x)$ and the lemma is true for this $x$. Therefore, it suffices to prove that the set
		\begin{equation}
			\tag{\textasteriskcentered}
			\bigcup_{\overset{k\in\N}{n>\log_\lambda(kR)}}f^nA(\tfrac{1}{k})
		\end{equation}
		has full measure. Indeed, notice that, by item (i) of \ref{def:subordinated}, the set $\bigcup_{k\in\N}A(\tfrac{1}{k})$ has full measure. So, by the $f$-invariance of $\mu$, the same is true for (\textasteriskcentered), concluding.
	\end{proof}
	
	\begin{lemma}\label{lem.superposicaolindenstrauss}
		For every $m>n\geq 0$ and for $\mu$-almost every $x\in M$,  it holds that $\mu^{\xi_m}(\xi_n(x))>0$.
	\end{lemma}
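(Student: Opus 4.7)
The plan is to reduce the claim, via $f$-invariance of $\mu$, to a support argument for conditional measures of a subordinated partition at time zero. Concretely, since $\xi_m = f^m\xi_0$ and $\mu$ is $f$-invariant, disintegration transforms equivariantly as $\mu^{\xi_m}_x = f^m_*\,\mu^{\xi_0}_{f^{-m}(x)}$ for $\mu$-a.e.\ $x$. Setting $z := f^{-m}(x)$ and using that $f^{-m}(\xi_n(x)) = \xi_{n-m}(z)$, one obtains
\[
\mu^{\xi_m}_x(\xi_n(x)) \;=\; \mu^{\xi_0}_z\bigl(\xi_{-k}(z)\bigr), \qquad k := m - n > 0.
\]
So it suffices to prove the reduced statement that $\mu^{\xi_0}_z(\xi_{-k}(z)) > 0$ for $\mu$-a.e.\ $z$.

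Next, I would establish two ingredients. \emph{Geometric ingredient:} the atom $\xi_{-k}(z) = f^{-k}(\xi_0(f^k(z)))$ contains an intrinsic unstable neighborhood of $z$. This follows from item (i) of Definition~\ref{def:subordinated} applied at $f^k(z)$, which yields $\cW^u_{r(f^k(z))}(f^k(z)) \subseteq \xi_0(f^k(z))$, together with the fact that $f^{-k}$ restricted to $\cW^u(f^k(z))$ is a $C^r$ diffeomorphism onto $\cW^u(z)$: the pull-back is then an intrinsic open neighborhood $\cW^u_{r_k(z)}(z) \subseteq \xi_{-k}(z)$ with $r_k(z) > 0$. \emph{Measure-theoretic ingredient:} for $\mu$-a.e.\ $z$, the point $z$ lies in the support of $\mu^{\xi_0}_z$. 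I would prove this by the standard countable-base argument: fix a countable base $\{U_j\}$ for the topology of $M$, define $E_j := \{y \in U_j : \mu^{\xi_0}_y(U_j) = 0\}$, and verify that $\mu^{\xi_0}_y(E_j) = 0$ for $\mu$-a.e.\ $y$ --- the case $y \in E_j$ being immediate since $E_j \subseteq U_j$, and the case $y \notin E_j$ following because $\mu^{\xi_0}_y$ depends only on the atom $\xi_0(y)$. The disintegration formula then yields $\mu(E_j) = 0$, and the union $N = \bigcup_j E_j$ is null.

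To conclude, I would note that because $\xi_0$ is $\delta$-subordinated (item (ii)), the atom $\xi_0(z)$ lies inside $\cW^u_\delta(z)$, which, for $\delta$ small enough, is an embedded disk in $M$; on this disk the ambient and intrinsic unstable topologies agree. Combining with the support property, every intrinsic unstable neighborhood of $z$ inside $\xi_0(z)$ has positive $\mu^{\xi_0}_z$-mass; in particular $\cW^u_{r_k(z)}(z)$ does, and hence so does $\xi_{-k}(z)$. The main difficulty lies in matching the ambient support statement with the intrinsic geometric setup: the same argument would fail for a coarser partition like $\xi_m$ with $m > 0$, whose atoms need not live in embedded disks and whose ambient neighborhoods of $x$ may meet the atom in many far-apart intrinsic pieces. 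Precisely this obstruction is what motivates the $f$-invariance reduction at the outset.
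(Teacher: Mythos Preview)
Your argument is correct and takes a genuinely different route from the paper. The paper argues purely measure-theoretically: it sets $Y=\{x:\mu^{\xi_m}_x(\xi_n(x))=0\}$, decomposes each $\xi_m$-atom into the (countably many) $\xi_n$-atoms it contains, and observes that any $\xi_n$-atom meeting $Y$ must have zero $\mu^{\xi_m}_x$-mass (pick $z$ in the intersection; then $\mu^{\xi_m}_z=\mu^{\xi_m}_x$ and $\xi_n(z)$ is that atom, so its mass vanishes by definition of $Y$). This yields $\mu^{\xi_m}_x(Y)=0$ a.e., hence $\mu(Y)=0$. No equivariance, no topology of leaves, no embedding — just atom-constancy of conditionals and the countable decomposition.

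Your approach instead pulls back via $f$-equivariance to the base partition $\xi_0$, then combines a geometric ingredient (the refined atom $\xi_{-k}(z)$ contains an intrinsic unstable neighbourhood of $z$) with a support ingredient (a.e.\ $z$ lies in $\operatorname{supp}\mu^{\xi_0}_z$), matching ambient and intrinsic topologies through the embeddedness of $\cW^u_\delta(z)$. This is longer but more transparent about \emph{why} the mass is positive, and your remark that the reduction to $\xi_0$ is essential (because for large $m$ the atom $\xi_m(x)$ may revisit any ambient neighbourhood of $x$ along intrinsically far pieces) is exactly the right diagnosis. The trade-off is that your proof needs $\delta$ small enough for the local unstable disk to be embedded, whereas the paper's argument works for arbitrary $\delta$; since the construction later fixes $\delta$ small anyway (Lemma~\ref{lem.positive}), this costs nothing in the application. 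Your countable-base support argument is fine once unpacked: the case ``$y\notin E_j$'' really splits into ``$E_j\cap\xi_0(y)=\varnothing$'' (trivial) and ``$E_j\cap\xi_0(y)\neq\varnothing$'' (then atom-constancy forces $\mu^{\xi_0}_y(U_j)=0$, hence $\mu^{\xi_0}_y(E_j)=0$).
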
	
	\begin{proof}
		Fix $m>n$. Consider the set
		\[
		Y\eqdef\{x\in M|\mu_x^{\xi_m}(\xi_n(x))=0\}.
		\]
		We claim that $\mu_x^{\xi_n}(Y)=0$ for $\mu$-a.e.$x\in M$. Using that $\{\mu_x^{\xi_n}\}_{x\in M}$ is a disintegration of $\mu$ one deduces that $Y$ has 0 measure, which is the desired conclusion. Therefore, it only remains to prove our claim.
		
		To establish the claim, use that by item (1) of Definition \ref{def:subordinated}, for almost every $x\in M$, there are at most countably many atoms of $\xi_n$ covering almost every point of $\xi_m(x)$. Hence, we can write
		\[
		\mu_x^{\xi_m}(Y)=\sum_{\xi_n(y)\subseteq\xi_m(x)}\mu_x^{\xi_m}(\xi_n(y)\cap Y)
		\]
		where the sum on the right is taken over the countably many atoms of $\xi_n$ that are contained in $\xi_m(x)$. We can show that every term on the right side of this formula is 0: if $\xi_n(y)\cap Y=\emptyset$, then there is nothing to verify; if that intersection in non empty, let $z\in\xi_n(y)\cap Y$. By definition of $Y$, we have $\mu_z^{\xi_m}(\xi_n(y)\cap Y)=0$; however, since $z\in\xi_n(y)$ and $\xi_n(y)\subseteq\xi_m(x)$, we know that $z$ and $x$ have the same atom in $\xi_m$, thus $\mu_x^{\xi_m}=\mu_z^{\xi_m}$ and we obtain that $\mu_x^{\xi_m}(\xi_n(y)\cap Y)=0$ which concludes the proof.
	\end{proof}
	
	Now, for each point $x\in M$, we we'll fix a reference interval $J_x\subseteq \cW^u(x)$ by defining $J_x\eqdef\cW^u_1(x)$. Using Lemma~\ref{lem.growth} we know that, for $\mu$-a.e.$x\in M$, there is a positive integer $n_x\in\N$ such that for all $n>n_x$ the domain of $\mu_x^{\xi_n}$ contains $J_x$.
	
	It is with this interval that we will normalize the sequence $\{\mu_x^{\xi_n}\}_{n\in\N}$. The next lemma clarifies why we can do that.
	
	\begin{lemma}\label{lem.positive}
		There exists  subordinated partition $\xi_0$ such that for $\mu$-a.e.$x\in M$ and for all $n>0$ it holds that $\mu_x^{\xi_n}(J_x)>0$.
	\end{lemma}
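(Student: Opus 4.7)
The plan is to select a subordinated partition $\xi_0$ whose atoms are already contained in the reference intervals $J_x$, and then invoke Lemma~\ref{lem.superposicaolindenstrauss} to transfer positive conditional mass from the small atom $\xi_0(x)$ to the possibly much larger atom $\xi_n(x) \supseteq \xi_0(x)$.

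First, I would apply Theorem~\ref{teo.subordinate} with any $\delta \in (0,1)$, say $\delta = 1/2$, to obtain a measurable partition $\xi_0$ that is $(1/2)$-subordinated to $\cW^u$ with respect to $\mu$. Then, by item (2) of Definition~\ref{def:subordinated}, for $\mu$-almost every $x \in M$ one has
\[
\xi_0(x) \subseteq \cW^u_{1/2}(x) \subseteq \cW^u_1(x) = J_x.
\]

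Second, I would use that the sequence $\{\xi_n\}$ is increasing in the sense specified in the excerpt: since $\xi_0 \prec f^{-1}\xi_0$, applying $f^n$ gives $\xi_n \prec \xi_{n-1} \prec \cdots \prec \xi_0$ for every $n > 0$, so that $\xi_0(x) \subseteq \xi_n(x)$ for $\mu$-almost every $x$. (This is consistent with the proof of Lemma~\ref{lem.superposicaolindenstrauss}, where the atoms of $\xi_n$ are seen to cover almost every point of $\xi_m(x)$ when $m > n$.)

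Third, applying Lemma~\ref{lem.superposicaolindenstrauss} with the roles $m := n$ and $n := 0$ yields, for each fixed $n > 0$, that $\mu_x^{\xi_n}(\xi_0(x)) > 0$ for $\mu$-almost every $x$. Intersecting these countably many full-measure sets, I obtain a single $\mu$-full-measure set $X_0 \subseteq M$ on which the positivity holds simultaneously for every $n > 0$. Combining with the inclusion $\xi_0(x) \subseteq J_x$ from the first step, I conclude that for every $x \in X_0$ and every $n > 0$,
\[
\mu_x^{\xi_n}(J_x) \geq \mu_x^{\xi_n}(\xi_0(x)) > 0.
\]

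There is no real obstacle here: the lemma is essentially a corollary of Lemma~\ref{lem.superposicaolindenstrauss}, the only non-tautological input being the freedom, granted by Theorem~\ref{teo.subordinate}, to take $\xi_0$ with atoms of diameter less than $1$ so that the seed atom $\xi_0(x)$ sits inside the reference interval $J_x$ from the outset.
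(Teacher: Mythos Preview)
Your proof is correct and follows essentially the same route as the paper: choose $\xi_0$ with atoms small enough to sit inside $J_x$, then apply Lemma~\ref{lem.superposicaolindenstrauss} and intersect over $n$. Your choice of $\delta=1/2$ is in fact slightly cleaner than the paper's compactness argument, since $J_x=\cW^u_1(x)$ already has inner radius~$1$ by definition.
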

	\begin{proof}
		By the continuity of $\cW^u_1(x)$ and compactness there exists $\delta>0$ so that the length of all $J_x$ is larger than $\delta$. Let $\xi_0$ be a $\delta$-subordinated partition, as given by Theorem~\ref{teo.subordinate}. This implies that $\xi_0(x)\subseteq J_x$ for all $x\in M$. For each $n$, Lemma~\ref{lem.superposicaolindenstrauss} gives a full measure subset of $x\in M$ satisfying $\mu^{\xi_n}_x(\xi_0(x))>0$. Taking the intersection of these countably many sets of full measure and using $\xi_0(x)\subseteq J_x$ for all $x\in M$ we conclude.
	\end{proof}

	With this lemma, we can define for almost every $x\in M$ the measures
	\[
	\frac{\mu_x^{\xi_n}}{\mu_x^{\xi_n}(J_x)}
	\]
	who all give the same value to $J_x$. In fact, we will show that they in fact agree in every set as $n\to+\infty$ (Lemma \ref{lem.sebaorigin}). The fundamental step in this direction is the following superposition property.
	
	\begin{lemma}\label{lem.sebafundamental}
		For $\mu$-a.e.$x\in M$, if $m>n$ then for all $A\subseteq\xi_n(x)$ measurable, we have
		\[
		\mu_x^{\xi_m}(A)=\mu_x^{\xi_n}(A)\mu_x^{\xi_m}(\xi_n(x))
		\]
	\end{lemma}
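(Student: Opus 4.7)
The plan is to derive the identity from the standard transitivity of Rokhlin disintegrations under refinements of measurable partitions. Reading the ordering displayed just before the statement, for $m > n \geq 0$ one has $\xi_m \prec \xi_n$, so $\xi_n$ refines $\xi_m$ and $\xi_n(x) \subseteq \xi_m(x)$ for $\mu$-a.e.\ $x$. The asserted equality then expresses exactly the fact that the restriction of the coarser conditional measure $\mu_x^{\xi_m}$ to the finer atom $\xi_n(x)$ is proportional to the finer conditional measure $\mu_x^{\xi_n}$, with proportionality constant $\mu_x^{\xi_m}(\xi_n(x))$.

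First, I would establish the transitivity identity
\[
\mu_x^{\xi_m}(B) \;=\; \int_{\xi_m(x)} \mu_y^{\xi_n}(B)\, d\mu_x^{\xi_m}(y),
\]
valid for every measurable $B\subseteq M$ and $\mu$-a.e.\ $x$. The right-hand side defines, for each such $x$, a probability measure supported on $\xi_m(x)$ that depends only on the $\xi_m$-atom of $x$; integrating it against $\mu$ reproduces $\mu$, by two successive applications of the defining identity of Rokhlin disintegration (first with respect to $\xi_m$, then inside each $\xi_m$-atom with respect to $\xi_n$). By the essential uniqueness of conditional measures, the right-hand side must coincide with $\mu_x^{\xi_m}$ for $\mu$-a.e.\ $x$.

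Second, I would substitute $B=A$ with $A\subseteq \xi_n(x)$. Since $\mu_y^{\xi_n}$ is supported on $\xi_n(y)$, the integrand $\mu_y^{\xi_n}(A)$ vanishes whenever $y\notin\xi_n(x)$, because $\xi_n(y)$ is then disjoint from $\xi_n(x)\supseteq A$; and for $y\in\xi_n(x)$ one has $\mu_y^{\xi_n}=\mu_x^{\xi_n}$ by essential uniqueness of conditional measures on a common atom. Hence the integral collapses:
\[
\mu_x^{\xi_m}(A) \;=\; \int_{\xi_n(x)} \mu_x^{\xi_n}(A)\, d\mu_x^{\xi_m}(y) \;=\; \mu_x^{\xi_n}(A)\,\mu_x^{\xi_m}(\xi_n(x)),
\]
which is the claim.

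The only subtle point, and what I view as the main bookkeeping obstacle, is the management of $\mu$-null exceptional sets: the transitivity identity, the identification $\mu_y^{\xi_n}=\mu_x^{\xi_n}$ on each $\xi_n$-atom, and the total mass normalisation $\mu_x^{\xi_n}(\xi_n(x))=1$ each hold only off a $\mu$-null set, while the statement is quantified over the countably many pairs $(n,m)$ of nonnegative integers with $m>n$. Taking the countable intersection of the corresponding full-measure sets yields a single full-measure subset on which the displayed identity holds simultaneously for every such $(n,m)$, which is the form in which the lemma will be used in the sequel.
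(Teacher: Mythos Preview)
Your proof is correct and is essentially the same as the paper's: the paper defines the auxiliary measure $\eta_{x,n}^{\xi_m}(A)=\int_{\xi_m(x)}\mu_y^{\xi_n}(A)\,d\mu_x^{\xi_m}(y)$, shows by two applications of Rokhlin that it disintegrates $\mu$ with respect to $\xi_m$, invokes uniqueness to get $\eta_{x,n}^{\xi_m}=\mu_x^{\xi_m}$, and then specializes to $A\subseteq\xi_n(x)$ exactly as you do. Your phrasing in terms of a ``transitivity identity'' is the same argument with only cosmetic differences.
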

	\begin{proof}
		Define an auxiliary measure $\eta_{x,n}^{\xi_m}$ in $\xi_m(x)$ by
		\[
		\eta_{x,n}^{\xi_m}(A)\eqdef\int_{\xi_m(x)}\left[\int_{\xi_n(y)}\mathds{1}_A(z)d\mu_y^{\xi_n}(z)\right]d\mu_x^{\xi_m}(y)
		\]
		Let $\varphi:M\to\R$ be an arbitrary integrable function and define
		\[
		\psi(q)\eqdef\int_{\xi_n(q)}\varphi(z) d\mu_q^{\xi_n}(z)
		\]
		Using that both $\{\mu_x^{\xi_m}\}_{x\in M}$ and $\{\mu_x^{\xi_n}\}_{x\in M}$ are disintegrations of $\mu$ we can use Rokhlin's Theorem twice to see that
		\begingroup
		\renewcommand{\arraystretch}{2.5}
		\[\begin{array}{rl}
			\displaystyle\int\left[\int_{\xi_m(p)}\varphi d\eta_{p,n}^{\xi_m}\right]d\mu(p)&\displaystyle=\int\left[\int_{\xi_m(p)}\left[\int_{\xi_n(q)}\varphi(z) d\mu_q^{\xi_n}(z)\right]d\mu_p^{\xi_m}(q)\right]d\mu(p)\\&\displaystyle=\int\left[\int_{\xi_m(p)}\psi(q)d\mu_p^{\xi_m}(q)\right]d\mu(p)\\&\displaystyle=\int\psi(p)d\mu(p)\\
			&\displaystyle=\int\left[\int_{\xi_n(p)}\varphi d\mu_p^{\xi_n}\right]d\mu(p)\\
			&\displaystyle=\int\varphi d\mu
		\end{array}\]
		\endgroup
		Hence, $\{\eta_{x,n}^{\xi_m}\}_{x\in M}$ is a disintegration of $\mu$ with respect to $\xi_m$. By uniqueness of the disintegration, we have $\eta_{x,n}^{\xi_m}=\mu_x^{\xi_m}$ for $\mu$-a.e.$x\in M$. Now, for $m>n$ and $A\subseteq\xi_n(x)\subseteq\xi_m$ we have
		\begin{align*}
			\mu_x^{\xi_m}(A)&=\eta_{x,n}^{\xi_m}(A)=\int_{\xi_m(x)}\left[\mathds{1}_{\xi_n(x)}(y)\int_{\xi_n(y)}\mathds{1}_A(z)d\mu_y^{\xi_n}(z)\right]d\mu_x^{\xi_m}(y)\\
			&=\mu_x^{\xi_n}(A)\mu_x^{\xi_m}(\xi_n(x)).\qedhere
		\end{align*}
	\end{proof}
	
	This lemma, which is  a foreshadowing of the superposition principle that is to come later, can be re-stated in a much more convenient way:
	
	\begin{lemma}\label{lem.sebaorigin}
		For $\mu$-a.e.$x\in M$, it holds that for every compact $K\subseteq W^u(x)$ there exists a $n_0\in\N$ such that for all $m,n>n_0$
		\[
		\frac{\mu_x^{\xi_m}(K)}{\mu_x^{\xi_m}(J_x)}=\frac{\mu_x^{\xi_n}(K)}{\mu_x^{\xi_n}(J_x)}
		\]
	\end{lemma}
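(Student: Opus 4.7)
The plan is to derive this ``ratio invariance'' statement as a direct consequence of the superposition identity established in Lemma \ref{lem.sebafundamental}, combined with the growth lemma \ref{lem.growth}. First I would fix $x$ in the full measure subset where the conclusions of Lemmas \ref{lem.growth}, \ref{lem.superposicaolindenstrauss}, \ref{lem.positive} and \ref{lem.sebafundamental} all hold simultaneously (a countable intersection of full measure sets, hence of full measure). Given a compact $K\subseteq\cW^u(x)$, the inclusions $K\subseteq\cW^u_R(x)$ and $J_x=\cW^u_1(x)\subseteq\cW^u_R(x)$ hold for $R$ large enough, so Lemma \ref{lem.growth} supplies an index $n_0$ such that $K\cup J_x\subseteq\xi_n(x)$ for every $n>n_0$.

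Fix any $m>n>n_0$. Since $\xi_n\prec\xi_m$, the atom $\xi_n(x)$ is contained in $\xi_m(x)$, and both $K$ and $J_x$ are measurable subsets of $\xi_n(x)\subseteq\xi_m(x)$. Applying Lemma \ref{lem.sebafundamental} with $A=K$ and with $A=J_x$ yields the two identities
\begin{equation*}
\mu_x^{\xi_m}(K)=\mu_x^{\xi_n}(K)\,\mu_x^{\xi_m}(\xi_n(x)),\qquad \mu_x^{\xi_m}(J_x)=\mu_x^{\xi_n}(J_x)\,\mu_x^{\xi_m}(\xi_n(x)).
\end{equation*}
By Lemma \ref{lem.positive} the denominator $\mu_x^{\xi_n}(J_x)$ is strictly positive, and by Lemma \ref{lem.superposicaolindenstrauss} the common factor $\mu_x^{\xi_m}(\xi_n(x))$ is also strictly positive, so dividing the two identities gives
\begin{equation*}
\frac{\mu_x^{\xi_m}(K)}{\mu_x^{\xi_m}(J_x)}=\frac{\mu_x^{\xi_n}(K)}{\mu_x^{\xi_n}(J_x)},
\end{equation*}
which is exactly the equality to be proved. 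The same argument applied to any other pair $m',n'>n_0$ shows that the ratio is in fact independent of the index chosen above $n_0$.

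I don't anticipate any serious obstacle here: the content of the statement is already built into Lemma \ref{lem.sebafundamental}, and the role of the growth lemma is only to guarantee that $K$ and $J_x$ eventually sit inside a common atom so that Lemma \ref{lem.sebafundamental} is applicable. The only place one must be slightly careful is the measurability bookkeeping at the beginning, namely intersecting the countably many full-measure sets produced by the previous lemmas to obtain one single full-measure set on which $x$ may be taken; but this is routine.
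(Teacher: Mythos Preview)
Your proposal is correct and follows essentially the same approach as the paper: invoke Lemma~\ref{lem.growth} to ensure $K$ (and $J_x$) eventually lie inside $\xi_n(x)$, then apply Lemma~\ref{lem.sebafundamental} to both $K$ and $J_x$ and cancel the common factor $\mu_x^{\xi_m}(\xi_n(x))$, which is positive by Lemma~\ref{lem.superposicaolindenstrauss}. The paper's argument is identical in substance, only slightly terser in the bookkeeping.
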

	\begin{proof}
		Since $K$ is compact -hence bounded- Lemma \ref{lem.growth} says that we can find a $n_0\in\N$ such that $K\subseteq\xi_k(x)$ for all $k>n_0$, thus the numerators above are well defined. Also, by Lemma~\ref{lem.superposicaolindenstrauss}, for $\mu$-a.e.$x\in M$ the denominators above are well defined for $m,n>0$. Hence,
		\[
		\frac{\mu_x^{\xi_m}(K)}{\mu_x^{\xi_m}(J_x)}=\frac{\mu_x^{\xi_n}(K)\mu_x^{\xi_m}(\xi_n(x))}{\mu_x^{\xi_n}(J_x)\mu_x^{\xi_m}(\xi_n(x))}=\frac{\mu_x^{\xi_n}(K)}{\mu_x^{\xi_n}(J_x)}
		\]
		as desired.
	\end{proof}
	
	Since all these measures agree, we can define an additive function
	\[
	\mu_x^u(K)\eqdef\lim_{n\to+\infty}\frac{\mu_x^{\xi_n}(K)}{\mu_x^{\xi_n}(J_x)}
	\]
	which, by Lemma \ref{lem.growth}, is defined for every compact set $K\subseteq W^u(x)$. This function induces a borelian measure -also denoted by- $\mu_x^u$ on the unstable leaf of $x$. We claim that this family $\{\mu_x^u\}_{x\in M}$ satisfy Theorem \ref{teo:leafwise}.
	
	\subsubsection{Proof of Theorem \ref{teo:leafwise}}
	Let $\{\mu_x^u\}_{x\in M}$ be the family constructed above, fix a subordinated partition $\xi< \cW^u$.
	
	To prove Theorem \ref{teo:leafwise}, we need the following general lemma, which is a direct computation, see \cite{ALOS} lemma 2.12.
	
	\begin{lemma}\label{lem.partiiterado}
		For $\mu$-a.e.$x\in M$ and $n\in\N$ we have
		\[
		\mu_x^{f^n\xi}=f^n_*\mu_{f^{-n}(x)}^{\xi}
		\]
	\end{lemma}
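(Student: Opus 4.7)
The plan is to invoke the uniqueness of Rokhlin disintegrations. Concretely, I would define the candidate family
\[
\nu_x \eqdef f^n_* \mu_{f^{-n}(x)}^{\xi}, \qquad x \in M,
\]
and show that $\{\nu_x\}_{x\in M}$ satisfies the three defining properties of the disintegration of $\mu$ with respect to the partition $f^n\xi$. Since Rokhlin disintegrations are unique up to a $\mu$-null set, this identifies $\nu_x$ with $\mu_x^{f^n\xi}$ and yields the lemma.

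First I would check that $\nu_x$ is supported on the correct atom and is constant on atoms of $f^n\xi$. Since $\mu^{\xi}_{f^{-n}(x)}$ is concentrated on $\xi(f^{-n}(x))$, its pushforward under $f^n$ is concentrated on $f^n(\xi(f^{-n}(x))) = (f^n\xi)(x)$. If $y\in (f^n\xi)(x)$, then $f^{-n}(y) \in \xi(f^{-n}(x))$, so $\mu_{f^{-n}(y)}^{\xi} = \mu_{f^{-n}(x)}^{\xi}$, hence $\nu_y = \nu_x$. Measurable dependence of $\nu_x$ on $x$ follows from measurable dependence of the family $\{\mu_y^{\xi}\}$ composed with the homeomorphism $f^{-n}$.

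The remaining, and main, point is the integration identity. Given a bounded measurable $\varphi:M\to\R$, I would compute, using first the definition of $\nu_x$ and then $f$-invariance of $\mu$ to perform a change of variables $y=f^{-n}(x)$:
\begin{align*}
\int_M \!\!\left[\int \varphi\, d\nu_x\right] d\mu(x)
&= \int_M \!\!\left[\int (\varphi\circ f^n)\, d\mu_{f^{-n}(x)}^{\xi}\right] d\mu(x) \\
&= \int_M \!\!\left[\int (\varphi\circ f^n)\, d\mu_{y}^{\xi}\right] d\mu(y) \\
&= \int_M (\varphi\circ f^n)\, d\mu \;=\; \int_M \varphi\, d\mu,
\end{align*}
where the penultimate equality is Rokhlin's disintegration identity for $\xi$ and the last is $f^n$-invariance of $\mu$. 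This shows $\{\nu_x\}$ is a disintegration of $\mu$ with respect to $f^n\xi$, and uniqueness then gives $\mu_x^{f^n\xi} = \nu_x$ for $\mu$-a.e. $x$.

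The only mild technical point is ensuring that the partition $f^n\xi$ is measurable so that Rokhlin's theorem applies and delivers uniqueness; this is immediate because $f^n$ is a homeomorphism and $\xi$ is measurable by hypothesis. No further difficulty is expected: the lemma is essentially the naturality of conditional measures under a measure-preserving invertible map, and the calculation above is the full content.
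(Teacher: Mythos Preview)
Your proposal is correct and is precisely the ``direct computation'' the paper alludes to (the paper does not write out a proof but refers to \cite{ALOS}, Lemma 2.12). The argument via uniqueness of Rokhlin disintegrations together with $f$-invariance of $\mu$ is the standard one, and there is no meaningful difference in approach.
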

	
	With that and an approximation argument we can prove the superposition principle:
	
	\begin{lemma}
		If $\xi<\cW^u$ is any partition subordinated to $\cW^u$, then for $\mu$-a.e.$x\in M$
		\[
		\mu_x^\xi=\frac{\mu_x^u|_{\xi(x)}}{\mu_x^u(\xi(x))}
		\]
	\end{lemma}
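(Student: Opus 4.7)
The plan is to first verify the formula for the specific partitions $\xi_n = f^n\xi_0$ used to construct $\mu_x^u$, and then reduce the general case to that one by a Rokhlin-type superposition argument modeled on the proof of Lemma~\ref{lem.sebafundamental}.

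For the first step, fix $n \geq 0$ large enough that $J_x \subseteq \xi_n(x)$ (guaranteed by Lemma~\ref{lem.growth}) and let $K \subseteq \xi_n(x)$ be compact. For any $m > n$, Lemma~\ref{lem.sebafundamental} applied to both $K$ and $J_x$ gives
\[
\frac{\mu_x^{\xi_m}(K)}{\mu_x^{\xi_m}(J_x)} = \frac{\mu_x^{\xi_n}(K)\,\mu_x^{\xi_m}(\xi_n(x))}{\mu_x^{\xi_n}(J_x)\,\mu_x^{\xi_m}(\xi_n(x))} = \frac{\mu_x^{\xi_n}(K)}{\mu_x^{\xi_n}(J_x)},
\]
so the limit defining $\mu_x^u(K)$ is reached for every $m > n$ and equals $\mu_x^{\xi_n}(K)/\mu_x^{\xi_n}(J_x)$. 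Since $\mu_x^{\xi_n}$ is a probability measure supported on $\xi_n(x)$, normalizing by $\mu_x^u(\xi_n(x)) = 1/\mu_x^{\xi_n}(J_x)$ yields
\[
\mu_x^{\xi_n} = \frac{\mu_x^u|_{\xi_n(x)}}{\mu_x^u(\xi_n(x))}.
\]

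Now let $\xi$ be any partition subordinated to $\cW^u$ and fix $\delta > 0$ with $\xi(y) \subseteq \cW_\delta^u(y)$ for $\mu$-a.e.\ $y$. By Lemma~\ref{lem.growth}, the measurable set $B_n \eqdef \{y : \cW_\delta^u(y) \subseteq \xi_n(y)\}$ increases to a full $\mu$-measure set. Hence for $\mu$-a.e.\ $x$ there exists $n = n(x)$ such that $x \in B_n$ and $\mu_x^{\xi_n}(B_n) = 1$; for any $y \in B_n \cap \xi_n(x)$ one has $\xi(y) \subseteq \xi_n(y) = \xi_n(x)$, so $\xi$ restricts to a genuine measurable partition of a full $\mu_x^{\xi_n}$-measure subset of the atom $\xi_n(x)$. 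For measurable $A \subseteq \xi_n(x)$ define the auxiliary measure
\[
\nu_x^{\xi_n}(A) \eqdef \int_{\xi_n(x)} \left[\int_{\xi(y)} \mathds{1}_A(z)\, d\mu_y^\xi(z)\right] d\mu_x^{\xi_n}(y).
\]
A Rokhlin double-integration identical to the one in Lemma~\ref{lem.sebafundamental}, using that both $\{\mu_x^{\xi_n}\}$ and $\{\mu_y^\xi\}$ disintegrate $\mu$, shows that $\{\nu_x^{\xi_n}\}_{x \in M}$ is itself a disintegration of $\mu$ with respect to $\xi_n$. Uniqueness of disintegration forces $\nu_x^{\xi_n} = \mu_x^{\xi_n}$ almost surely, and specializing to $A \subseteq \xi(x)$ gives $\mu_x^{\xi_n}|_{\xi(x)} = \mu_x^{\xi_n}(\xi(x))\,\mu_x^\xi$. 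Combining with the first step, the common normalization $\mu_x^u(\xi_n(x))$ cancels and yields the desired identity $\mu_x^\xi = \mu_x^u|_{\xi(x)} / \mu_x^u(\xi(x))$.

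The main obstacle is really in the second step: one must choose $n = n(x)$ as a measurable function of $x$ and verify that the auxiliary measure $\nu_x^{\xi_n}$ makes sense and that the double-integration argument carries through, despite the fact that $\xi$ is not a globally finer partition than $\xi_n$ on $M$. The subordination of $\xi$ (atoms of bounded diameter) combined with the unbounded growth of $\xi_n$-atoms is precisely what guarantees that this local compatibility holds on a $\mu_x^{\xi_n}$-full measure subset, making the superposition identity available on all of $\xi(x)$.
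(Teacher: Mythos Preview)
Your first step is clean and correctly recovers the superposition identity for the building partitions $\xi_n$. The gap is in the second step, precisely at the claim that for $\mu$-a.e.\ $x$ one can find $n=n(x)$ with $\mu_x^{\xi_n}(B_n)=1$. This does not follow from $\mu(B_n)\to 1$: for $y\in\xi_n(x)$ one has $y\in B_n$ if and only if $\cW_\delta^u(y)\subseteq\xi_n(x)$, so $\xi_n(x)\setminus B_n$ always contains the $\delta$-collar of the boundary of $\xi_n(x)$ inside the leaf, and there is no reason for $\mu_x^{\xi_n}$ to give that collar zero mass, however large $n$ is. Without this, your auxiliary measure $\nu_x^{\xi_n}$ can leak mass outside $\xi_n(x)$; it is then not a disintegration of $\mu$ with respect to $\xi_n$, Rokhlin uniqueness does not apply, and the identity $\nu_x^{\xi_n}=\mu_x^{\xi_n}$ is unavailable. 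You correctly flag this passage as the main obstacle, but the proposed resolution (``local compatibility holds on a $\mu_x^{\xi_n}$-full measure subset'') is exactly what fails.

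The paper handles this difficulty by a dual maneuver: rather than waiting for $\xi_n$ to swallow the atoms of $\xi$, it contracts $\xi$ by pulling back under $f^{-n}$, so that on large compacts $K_i$ where $\cW^u_{\varepsilon_i}(x)\subseteq\xi_0(x)$ the partition $f^{-n}\xi$ genuinely refines $\xi_0$. It then applies the superposition identity to the \emph{restricted} partitions $f^{-n}\xi|_{K_i}$ and $\xi_0|_{K_i}$, pushes forward via Lemma~\ref{lem.partiiterado}, and exhausts by letting $K_i\uparrow M$. Your route can be repaired in the same spirit---run the double integration on $B_n$ with the restricted partitions $\xi|_{B_n}$ and $\xi_n|_{B_n}$, obtain the formula on each $B_n$, and let $n\to\infty$---but this restriction-and-exhaustion step is essential and is missing from the proposal as written.
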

	\begin{proof}
		Let $\xi_0$ be the subordinated partition used in the construction of the measures $\mu_x^u$. By item $(1)$ of definition \ref{def:subordinated}, we can find an increasing family of subsets $K_1\subseteq K_2\subseteq\cdots\subseteq K_n\subseteq\cdots\subseteq\TT$ such that $\mu(K_i)\to1$ and, for $i\in\N$, there is a $\varepsilon_i>0$ such that $W_{\varepsilon_i}^u(x)\subseteq\xi_0(x)$ for all $x\in K_i$.
		
		Since atoms of $\xi$ are uniformly bounded by a constant $\delta>0$, for each $i$ let $n_i\in\N$ be big enough so that $\beta^{-n_i}\delta<\varepsilon_i$. With that choice, for every $p\in K_i$, the atom $f^{-n}\xi(x)$ of $f^{-n}\xi$ is contained in the atom $\xi_0(x)$ of $\xi_0$. Lets restrict $f^{-n}\xi$ and $\xi_0$ to partitions $f^{-n}\xi|_{K_i}$ and $\xi_0|_{K_i}$ of $K_i$.
		
		By Lemmas \ref{lem.sebafundamental} and \ref{lem.positive}, for $\mu$-a.e.$x\in K_i$ we have
		\[
		\mu_x^{f^{-n}\xi|_{K_i}}=\frac{\mu_x^{\xi_0|_{K_i}}|_{f^{-n}\xi|_{K_i}}}{\mu_x^{\xi_0|_{K_i}}(f^{-n}\xi|_{K_i}(x))}
		\]
		Using Lemma \ref{lem.partiiterado}, we have
		\[
		\mu^{f^{-n}\xi|_{K_i}}_x=f^{-n}_*\mu^{\xi|_{K_i}}_{f^nx}
		\]
		and
		\[
		\mu^{\xi_0|_{K_i}}_x=f^{-n}_*\mu^{\xi_n|_{K_i}}_{f^nx}
		\]
		Thus
		\[
		f^{-n}_*\mu^{\xi|_{K_i}}_{f^nx}=\frac{f^{-n}_*\mu^{\xi_n|_{K_i}}_{f^nx}|_{f^{-n}\xi|_{K_i}}}{f^{-n}_*\mu^{\xi_n|_{K_i}}_{f^nx}(f^{-n}\xi|_{K_i}(x))}
		\]
		Since $f^{-n}\xi|_{K_i}(x)=K_i\cap f^{-n}(\xi(f^nx)$ and $f^{-n}_*\mu(A)=\mu(f^nA)$, this denominator on the right is just $\mu^{\xi_n|_{K_i}}_{f^nx}(f^n(K_i)\cap\xi(f^nx))$. Hence, applying $f^n_*$ and writing $y=f^n(x)$, we obtain that
		\[
		\mu^{\xi|_{K_i}}_y=\frac{\mu^{\xi_n|_{K_i}}_y|_{\xi|_{K_i}}}{\mu^{\xi_n|_{K_i}}_y(f^n(K_i)\cap\xi(y))}=\frac{\mu^{\xi_n}_y|_{K_i\cap\xi(y)}}{\mu^{\xi_n}_y(f^n(K_i)\cap\xi(y))}
		\]
		for every $y\in f^n(K_i)$. Since $\bigcup_i K_i$ covers almost every point, the above equality actually holds for the entire atom of $\xi$ almost everywhere and the lemma is proven.
	\end{proof}
	
	\subsection{Application to $u$-Gibbs measures}
	
	The construction above is very general, as it holds for any invariant measure; now we shall prove the following property, which holds specifically for u-Gibbs measures:
	
	\begin{proposition}\label{prop.leafwisegibbs}
		Let $f:M\to M$ be a partially hyperbolic diffeomorphism. Given a $u$-Gibbs measure $\mu\in\cp$ there exists a measurably varying family $\{g_x\}_{x\in M}$ of positive continuous functions $g_x:\cW^u(x)\to\R$ such that for $\mu$-almost every $x\in M$ the following holds:
		\begin{enumerate}
			\item $\mu^u_x(A)=\int_Ag_x(y)d\Leb^u_x(y)$, for every bounded measurable set $A\subset\cW^u(x)$;
			\item For $\mu^u_x$-almost every $y\in\cW^u(x)$ one has
			\[
			\frac{g_x(y)}{g_x(x)}=\rho^u_x(y).
			\]
		\end{enumerate}
\end{proposition}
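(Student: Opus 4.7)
The plan is to combine Theorem~\ref{teo:leafwise} with Ledrappier's formula (Proposition~\ref{prop.ledra}) through the explicit limit definition of $\mu^u_x$. Theorem~\ref{teo:leafwise} identifies $\mu^u_x|_{\xi(x)}$ with $\mu^\xi_x$ up to a (point-dependent) multiplicative constant, while Proposition~\ref{prop.ledra} expresses the Radon-Nikodym derivative of $\mu^\xi_x$ with respect to $\Leb^u_x$ in terms of the dynamical density function $\rho^u_x$. Since the construction of $\mu^u_x$ in the previous subsection is normalized so that $\mu^u_x(J_x)=1$ for $J_x = \cW^u_1(x)$, the natural candidate is
\[
g_x(y) \eqdef \frac{\rho^u_x(y)}{\int_{J_x}\rho^u_x(z)\, d\Leb^u_x(z)}.
\]

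To verify property (1), I would fix a compact set $K\subset \cW^u(x)$. By Lemma~\ref{lem.growth}, there exists $n_0$ such that $K\cup J_x \subset \xi_n(x)$ for every $n>n_0$. Applying Proposition~\ref{prop.ledra} to the subordinated partition $\xi_n$ gives that the density $g^{\xi_n}_x = d\mu^{\xi_n}_x/d\Leb^u_x$ satisfies $g^{\xi_n}_x(y)=g^{\xi_n}_x(x)\rho^u_x(y)$ almost surely. Hence
\[
\frac{\mu^{\xi_n}_x(K)}{\mu^{\xi_n}_x(J_x)}=\frac{g^{\xi_n}_x(x)\int_K \rho^u_x(y)\,d\Leb^u_x(y)}{g^{\xi_n}_x(x)\int_{J_x}\rho^u_x(y)\,d\Leb^u_x(y)}=\int_K g_x(y)\,d\Leb^u_x(y),
\]
the crucial observation being that the unknown proportionality constant $g^{\xi_n}_x(x)$ cancels, yielding an expression independent of $n$. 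Passing to the limit in the defining formula $\mu^u_x(K)=\lim_{n\to\infty}\mu^{\xi_n}_x(K)/\mu^{\xi_n}_x(J_x)$ from Section~\ref{sec.leafwise} gives $\mu^u_x(K)=\int_K g_x\,d\Leb^u_x$, which extends to bounded measurable sets by standard approximation.

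Property (2) is then immediate from $\rho^u_x(x)=1$, a direct consequence of the defining limit (or equivalently of the cocycle relation of Remark~\ref{rem.concicledensity}): $g_x(y)/g_x(x)=\rho^u_x(y)$. Positivity of $g_x$ and measurable dependence on the base point are inherited from the corresponding properties of $\rho^u_x$. The only place that requires slight care is the continuity of $g_x$ along each leaf, which reduces to the continuity of $\rho^u_x$ on $\cW^u(x)$; this follows from the uniform convergence of the defining product on compact subsets of the leaf, a standard consequence of Hölder regularity of $\log\jac^u$ combined with the exponential backward contraction along unstable leaves. I do not foresee any serious obstacle: the argument is really book-keeping of normalizations, and the essential feature is that the normalization by $J_x$ chosen in the construction of $\mu^u_x$ is precisely what makes Ledrappier's pointwise density formula globalize coherently to the whole leaf.
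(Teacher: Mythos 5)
Your proof is correct and follows the same essential route as the paper: both relate the leafwise measure $\mu^u_x$ to the conditionals $\mu^{\xi_n}_x$ through the defining limit and then invoke Ledrappier's formula (Proposition~\ref{prop.ledra}) for the iterated partitions $\xi_n$ to identify the density with $\rho^u_x$ up to normalization. The only difference is organizational: you define $g_x$ explicitly as $\rho^u_x$ normalized over $J_x$ and verify directly that it represents $\mu^u_x$, whereas the paper first obtains $g_x$ abstractly by Radon--Nikodym and then establishes the ratio formula through a Lusin/Borel--Cantelli exhaustion; your route avoids that bookkeeping, which is a small simplification but not a different argument.
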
	
\begin{proof}
Let us begin proving the existence of a family of $\Leb^u_x$ integrable functions $g_x:\cW^u(x)\to\R$ satisfying the first item. For that, we let $\xi_0<\cW^u$ be the subordianted partition used in the construction of the leafwise family $\{\mu^u_x\}_{x\in M}$. Consider, as before, $\xi_n=f^n(\xi_0)$, for $n>0$. Observe that we may assume, without lost of generality, that for $\mu$-almost every $x\in M$ and for every $n\in\N$ the conditional measure $\mu^{\xi_n}_x$ is absolutely continuous with respect to $\Leb^u_x$. 

We also know, from the construction of the leafwise family, that for $\mu$-almost every $x\in M$ there exists $n_0(x)\in\N$ such that whenever $n\geq n_0(x)$ one has 
\[
\mu^u_x(A)=\frac{\mu^{\xi_n}_x(A)}{\mu^{\xi_n}(J_x)},
\] 
for every measurable set $A\subset\xi_n(x)$, where $J_x=\cW^u_1(x)$.

Now, fix arbitraly a positive integer $m\in\N$ and a $\mu$-typical point $x\in M$ (satisfying the above properties). Take $n>n_0(x)$ large enough so that 
\[
\cW^u_m(x)\subset\xi_n(x).
\]  
Let $B\subset\cW^u_m(x)$ be a measurable set with $\Leb^u_x(B)=0$. Then, 
\[
\mu^u_x(B)=\frac{\mu^{\xi_n}_x(B)}{\mu^{\xi_n}(J_x)}=0.
\]
Since $m$ is arbitrary, this proves that if $\hat{B}\subset\cW^u(x)$ has $\Leb^u_x(\hat{B})=0$ then $\mu^u_x(\hat{B})=0$. By Radon-Nikodym's theorem, there exists an integrable function $g_x:\cW^u(x)\to\R$ such that 
\[
\mu^u_x(A)=\int_Ag_x(y)d\Leb^u_x(y),
\]
for every measurable set $A\subset\cW^u(x)$. 

We now claim that for $\mu$-almost every $x\in M$ we have
\begin{equation}
\label{eq.ledra}
\frac{g_x(y)}{g_x(x)}=\rho^u_x(y),
\end{equation}
for $\mu^u_x$-almost every $y\in\cW^u(x)$. Notice that, if this claim is true, then the integrable function $g_x$ admits a positive continuous extension on the entire leaf $\cW^u(x)$. This shows that we only have to prove our claim in order to establish completely the proposition. 

To prove the claim, fix $m\in\N$ and $\eps>0$. Given a $\mu$-typical point $x\in M$ and $n>n_0(x)$ sufficiently large so that $\cW^u_m(x)\subset\xi_n(x)$, let $g_x^n\eqdef\frac{d\mu^{\xi_n}}{d\Leb^u_x}$. Given a Borel set $A\subset\cW^u_m(x)$, we know that
\[
\mu^u_x(A)=\frac{\mu^{\xi_n}_x(A)}{\mu^{\xi_n}_x(J_x)}=\frac{1}{\mu^{\xi_n}_x(J_x)}\int_Ag^n_x(y)d\Leb^u_x(y).
\] 
This shows that 
\[
g_x(y)=\frac{g_x^n(y)}{\mu^{\xi_n}_x(J_x)},
\]
for $\Leb^u_x$ almost every $y\in\cW^u_m(x)$. As a consequence, we get a full measure subset $K_m\subset M$ so that whenever $x\in K_m$ there exists a positive integer $n\in\N$ satisfying
\begin{equation}
		\label{eq.gost}
\frac{g_x(y)}{g_x(x)}=\frac{g^k_x(y)}{g^k_x(x)},
\end{equation} 
for $\Leb^u_x$ almost every $y\in\cW_m^u(x)$ and every $k\geq n$. 

Let $x\mapsto \gamma(x)$ be the function which to $\mu$-almost every $x\in M$ associates the integer $n=\gamma(x)$ for which \eqref{eq.gost} holds. Take $N\in\N$ sufficiently large so that $\mu\left(\bigcup_{n=1}^N\gamma^{-1}(n)\right)>1-\eps$.  By regularity of $\mu$ one obtains a compact set $K^{\eps}_m$ with $\mu(K^{\eps}_m)>1-\eps$ such that for every $x\in K^{\eps}_m$, \eqref{eq.gost} holds with $k=N$. 

Now, take $\eps_m=2^{-m}$ and apply Borell-Canteli's lemma. Then, for $\mu$-almost every point $x\in M$ there exists $m(x)$ so that if $m>m(x)$ then $x\in K^{\eps_m}_m$. In particular, \eqref{eq.gost} holds for sufficiently large $k$. Applying Proposition~\ref{prop.ledra} this ensures that   
\[
\frac{g_x(y)}{g_x(x)}=\rho^u_x(y),
\]
for $\Leb^u_x$ almost every $y\in\cW_m^u(x)$ and every $m>m(x)$. This establishes our claim and completes the proof.
\end{proof}

Proposition~\ref{prop.leafwisegibbs} has the following crucial implication for our arguments. The result is largely inspired by one of the \emph{basic moves} for the so-called leafwise quotient measures of \cite{ALOS}. 

\begin{corollary}
	\label{cor.leafwisegibbs}
	Let $f:M\to M$ be a partially hyperbolic diffeomorphism. Given a $u$-Gibbs measure $\mu\in\cp$, there exists a measurably varying family $\{g_x\}_{x\in M}$ of positive continuous functions $g_x:\cW^u(x)\to\R$ such that
	For $\mu$-almost every $x\in M$ the leafwise measures $\{\mu^u_x\}_{x\in M}$ satisfy
	\[
	\mu^u_{f^k(x)}(f^k(A))=\frac{g_{f^k(x)}(f^k(x))}{g_x(x)}\times\jac^u_x(k)\mu^u_x(A),
	\]
	for every bounded measurable set $A\subset\cW^u(x)$ and every $k\in\Z$.
\end{corollary}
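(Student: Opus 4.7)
The plan is to reduce the claimed measure identity to a pointwise equality of densities on $\cW^u(x)$, and then to verify that equality through a cocycle relation for $\rho^u$ along the orbit.

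First, using Proposition~\ref{prop.leafwisegibbs}(1) I would write $\mu^u_{f^k(x)}(f^k(A)) = \int_{f^k(A)} g_{f^k(x)}(z)\, d\Leb^u_{f^k(x)}(z)$ and then perform the change of variables $z = f^k(y)$, whose Jacobian along $E^u$ gives $d\Leb^u_{f^k(x)}(z) = \jac^u_y(k)\, d\Leb^u_x(y)$; this converts the left-hand side into $\int_A g_{f^k(x)}(f^k(y))\,\jac^u_y(k)\, d\Leb^u_x(y)$. The right-hand side of the corollary, by Proposition~\ref{prop.leafwisegibbs}(1) applied at $x$, is $\frac{g_{f^k(x)}(f^k(x))}{g_x(x)}\,\jac^u_x(k)\int_A g_x(y)\, d\Leb^u_x(y)$. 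Thus the corollary reduces to the pointwise identity
\[
g_{f^k(x)}(f^k(y))\,\jac^u_y(k) = \frac{g_{f^k(x)}(f^k(x))}{g_x(x)}\,\jac^u_x(k)\, g_x(y)
\]
for $\Leb^u_x$-almost every $y\in\cW^u(x)$. Applying Proposition~\ref{prop.leafwisegibbs}(2) at $x$ and $f^k(x)$ to substitute $g_{f^k(x)}(f^k(y)) = g_{f^k(x)}(f^k(x))\,\rho^u_{f^k(x)}(f^k(y))$ and $g_x(y) = g_x(x)\,\rho^u_x(y)$ cancels the overall $g$-factor, and the whole claim collapses to the cocycle identity $\rho^u_{f^k(x)}(f^k(y))\, \jac^u_y(k) = \jac^u_x(k)\, \rho^u_x(y)$.

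The main step is therefore to establish this last identity directly from the definition of $\rho^u$. I would unfold $\rho^u_{f^k(x)}(f^k(y)) = \lim_n \jac^u_{f^k(x)}(-n)/\jac^u_{f^k(y)}(-n)$ and apply the cocycle property $\jac^u_a(m+n) = \jac^u_a(m)\,\jac^u_{f^m(a)}(n)$ of the unstable Jacobian (valid for all $m,n\in\Z$) to rewrite $\jac^u_{f^k(a)}(-n) = \jac^u_a(k-n)/\jac^u_a(k)$ for $a\in\{x,y\}$. The ratio in the defining limit then factorizes as a prefactor built out of $\jac^u_x(k)$ and $\jac^u_y(k)$ times $\jac^u_x(k-n)/\jac^u_y(k-n)$, and the latter converges to $\rho^u_x(y)$ as $n\to\infty$ since $k-n\to-\infty$. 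Collecting terms yields the required identity. The case $k<0$ follows by applying the $k>0$ case to $f^{-k}$ at the base point $f^k(x)$, and $k=0$ is trivial.

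The only real difficulty in the above plan is the orientation bookkeeping inside the cocycle computation: one must be careful about the sign convention in the definition of $\jac^u_a(k)$ for negative $k$ and about how that sign interacts with the limit defining $\rho^u$. Once this is handled, the argument is nothing but a change of variables combined with the density description supplied by Proposition~\ref{prop.leafwisegibbs}, with no further analytical input required.
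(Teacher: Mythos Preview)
Your proposal is correct and follows essentially the same route as the paper: both arguments apply the change of variables along $\cW^u$, invoke Proposition~\ref{prop.leafwisegibbs} to express each side via $\rho^u$, and reduce everything to the cocycle identity $\rho^u_{f^k(x)}(f^k(y))\,\jac^u_y(k)=\jac^u_x(k)\,\rho^u_x(y)$. The only cosmetic difference is that the paper carries this out for $k=1$ and then appeals to induction, whereas you verify the cocycle identity for arbitrary $k$ directly from the definition of $\rho^u$; both are equally valid ways to close the argument.
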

\begin{proof}
From Proposition~\ref{prop.leafwisegibbs} and applying the change of variables formula	we get
\[
\mu^u_{f(x)}(f(A))=\int_{f(A)}g_{f(x)}(z)d\Leb^u_{f(x)}(z)=\int_{A}\jac^u_yg_{f(x)}(f(y))d\Leb^u_x(y).
\]
Since 
\[
\rho^u_{f(x)}(f(y))=\frac{\jac^u_x}{\jac^u_y}\rho^u_x(y),
\]
we deduce from the second item in Proposition~\ref{prop.leafwisegibbs} that
\[
g_{f(x)}(f(y))=\frac{g_{f(x)}(f(x))}{g_x(x)}\frac{\jac^u_x}{\jac^u_y}g_x(y).
\]
Therefore,
\begin{align*}
	\mu^u_{f(x)}\left(f(A)\right)&=\int_A\jac^u_x\frac{g_{f(x)}(f(x))}{g_x(x)}g_x(y)d\Leb^u_x(y)\\
	&=\jac^u_x\frac{g_{f(x)}(f(x))}{g_x(x)}\mu^u_x(A),
\end{align*}
as claimed when $k=1$. The result now follows from induction. 
\end{proof}
	\section{A formula for the unstable Jacobian}\label{sec.formula}
	
	In this section we develop the core argument of our proof. The goal is to develop a measurable comparison between the Leafwise Measures and the Margulis Family in order to obtain a neat formula for the unstable derivative (Lemma \ref{lem.formulalinda}), under the assumptions of Theorem~\ref{teo.maintechnical}. 
	
	Thus, along this section we consider $f:M\to M$ to be a partially hyperbolic diffeomorphism. Let $\mu\in\cp$ be an ergodic $u$-Gibbs measure admiting an unstable Margulis family. Thus, we are precisely under the assumptions of Theorem~\ref{teo.maintechnical}.

	\subsection{The Superposition Property Revisited}
	
	We denote by $\{m^u_x\}_{x\in\TT}$ the Margulis system along unstable manifolds. We denote by $\{\mu^u_x\}_{x\in\TT}$ the leafwise unstable measures of $\mu$, as constructed in the previous section.

	\begin{lemma}
		\label{lem.seba}
		Given a subordinate partition $\xi<W^u$ it holds for $\mu$ a.e. $x\in M$ that
		\[
		\frac{\mu^u_x|_\xi}{\mu^u_x(\xi(x))}=\frac{m^u_x|_\xi}{m^u_x(\xi(x))}
		\]
	\end{lemma}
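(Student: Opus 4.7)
The plan is to observe that both sides of the claimed identity coincide with the Rokhlin conditional probability $\mu^{\xi}_x$. The left-hand side equals $\mu^{\xi}_x$ immediately by Theorem~\ref{teo:leafwise}, so the content of the lemma reduces to establishing
\[
\mu^{\xi}_x = \frac{m^u_x|_{\xi(x)}}{m^u_x(\xi(x))}
\]
for $\mu$-almost every $x\in M$.

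To that end, I would first cover $M$ by finitely many foliated boxes $B_1,\dots,B_N$ with respect to $\cW^u$, chosen so that every plaque has inner unstable radius strictly larger than $\delta$, where $\delta>0$ is the diameter bound on atoms of $\xi$ provided by item (2) of Definition~\ref{def:subordinated}. This ensures that, for $\mu$-a.e.\ $x$ lying in a suitable $B_i$, the atom $\xi(x)\subset\cW^u_{\delta}(x)$ is contained in the plaque $\cW^u(x)\cap B_i$. Property (2) of the Margulis family then gives, for $\mu$-a.e.\ $x\in B_i$,
\[
\mu^{B_i}_x \;=\; \frac{m^u_x|_{\cW^u(x)\cap B_i}}{m^u_x(\cW^u(x)\cap B_i)}.
\]

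Next, I would appeal to the tower property of Rokhlin disintegration: since $\xi|_{B_i}$ refines the plaque partition of $B_i$ on a full-measure subset, the conditional probability $\mu^{\xi}_x$ is obtained by further conditioning $\mu^{B_i}_x$ on $\xi(x)$, namely $\mu^{\xi}_x = \mu^{B_i}_x|_{\xi(x)}/\mu^{B_i}_x(\xi(x))$. Substituting the Margulis formula and using $\xi(x)\subset\cW^u(x)\cap B_i$ cancels the normalizing factor $m^u_x(\cW^u(x)\cap B_i)$ and yields exactly the identity displayed above, completing the argument when combined with Theorem~\ref{teo:leafwise}.

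The main obstacle is bookkeeping rather than conceptual: one must carefully verify that $\xi|_{B_i}$ genuinely refines the plaque partition on a full-measure set, and control the measurable dependence of the box-by-box local identifications so that they paste into a globally defined statement. The cleanest way is to pass to the common refinement of $\xi$ with each plaque partition and invoke the uniqueness of Rokhlin disintegration, which sidesteps any concern about whether the candidate family is defined locally in each box or globally on $M$.
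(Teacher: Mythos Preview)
Your proposal is correct and follows essentially the same route as the paper's proof: both identify each side with the Rokhlin conditional $\mu^{\xi}_x$, using Theorem~\ref{teo:leafwise} for the left-hand side and, for the right-hand side, combining property~(2) of the Margulis family with the tower (transitivity) relation between the disintegration on plaques $\mu^{B}_x$ and the finer disintegration $\mu^{\xi}_x$, established via uniqueness in Rokhlin's theorem. The only cosmetic difference is that the paper writes out the tower argument explicitly as an integral identity rather than invoking it by name, and does not insist on choosing boxes large enough to contain entire atoms of $\xi$.
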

	\begin{proof}
		Let $B$ be a foliated box in $M$ for the unstable foliation $\cW^u$ and denote by $\{\mu_x^B\}_{x\in B}$ be the disintegration of $\mu|_B$ along unstable plaques in $B$. Also, let $\{\mu_x^\xi\}_{x\in\TT}$ be the disintegration of $\mu$ with respect to the subordinate partition $\xi$.
		
		By Theorem~\ref{teo:leafwise} we know that
		\[\tag{i}
		\frac{\mu^u_x|_\xi}{\mu^u_x(\xi(x))}=\mu_x^\xi.
		\]
		Also, since $\{m^u_x\}_{x\in M}$ is a Margulis family of unstable measure, we know that there is a $C>0$ such that
		\[\tag{ii}
		m^u_x|_{B\cap W^u_{loc}(x)}=C\mu_x^B
		\]
		To connect these expressions, we must understand how $\mu_x^B$ and $\mu_x^\xi$ are related. For this, let us recall that almost every atom of a subordinate partition contains an open segment of unstable leaf (item (i) of definition \ref{def:subordinated}); thus for $\mu$-a.e.$x\in\TT$ the unstable leaf of $x$ contains atmost countably many atoms of $\xi$. In particular, there is a set $B'\subseteq B$ with $\mu(B\setminus B')=0$ such that for all $x\in B'$, the collection $\{\xi(y)\}_{y\in W^u_{loc}(x)\cap B}$ is a countable partition of $W^u_{loc}(x)\cap B$. Thus, we can disintegrate $\mu_x^B$ with respect to it; moreover, since the partition is countable, the conditionals of this disintegration are just the normalized restriction of $\mu_x^B$: for every $\phi\in L^1(B',m_x^B)$,
		\[
		\int\varphi d\mu_x^B=\int\left[\frac{\int\varphi(z) d\mu_x^B|_{\xi(y)}(z)}{\mu_x^B(\xi(y))}\right]d\mu_x^B|_{B'}(y)
		\]
		Thus, for all $\varphi\in L^1(\TT,\mu)$
		\[\begingroup
		\renewcommand{\arraystretch}{3}
		\begin{array}{rl}
			\displaystyle\int_B\varphi(x) d\mu(x)&\displaystyle=\int_{B'}\left[\int\left[\frac{\int\varphi(z) d\mu_x^B|_{\xi(y)}(z)}{\mu_x^B(\xi(y))}\right]d\mu_x^B|_{B'}(y)\right]d\mu(x)\\
		\end{array}
		\endgroup\]
		Using that $\{\mu_x^{B}\}_{x\in B'}$ is a disintegration of $\mu$ in $B'$, we can use Rokhlin's theorem to the function
		\[
		\psi(y)\eqdef\frac{\int\varphi(z)d\mu_x^B|_{\xi(y)}(z)}{\mu_x^B(\xi(y))}
		\]
		to obtain
		\[
		\int_B\varphi(x) d\mu(x)=\int_{B'}\left[\frac{\int\varphi(z)d\mu_x^B|_{\xi(x)}(z)}{\mu_x^B(\xi(x))}\right]d\mu(x).
		\]
		We deduze that $\{\frac{\mu^B_x|_\xi}{\mu^B_x(\xi(x))}\}_{x\in B'}$ is a system of conditional measures disintegrating $\mu$ with respect to $\xi$ restricted to $B'$. By the uniqueness in Rokhlin's Theorem we conclude that 
		\[
		\frac{\mu^B_x|_\xi}{\mu^B_x(\xi(x))}=\mu_x^\xi
		\]
		This, together with equations (i) and (ii) gives
		\[
		\frac{m^u_x|_{\xi(x)\cap W^u(x)\cap B'}}{m^u_x(\xi(x))}=\frac{\mu^u_x|_\xi}{\mu^u_x(\xi(x))}
		\]
		Since $B'$ has full measure in $B$, the above equation holds for $B$ instead of $B'$. Also the intersection with $W^u(x)$ is redundant, because $\xi(x)\subseteq W^u$. And, at last, we can cover $M$ with foliated boxes $B$ to obtain the equality almost everywhere as desired.
	\end{proof}
	
	\subsection{A dynamical consequence}
	
	The main finding of our work is the formula below giving a very precise estimate for the unstable derivative at a $\mu$-typical point. We are going to use along this section the following simplifying notation for orbits: given $x\in M$, and given an integer $k\in\Z$, we denote $x_k\eqdef f^k(x)$.
	
	\begin{lemma}
		\label{lem.formulalinda}
		Given any subordinate partition $\xi<\cW^u$, for almost every point and every integer $k\in\Z$ the following holds:
		\[
		\jac^u_x(k)^{-1}=\frac{g_{x_k}(x_k)}{g_x(x)}\times\frac{\mu^u_x(\xi(x))}{m^u_x(\xi(x))}\times m^u_{x_k}(\cW^u_1(x_k))\times e^{-\lambda k}.
		\]
	\end{lemma}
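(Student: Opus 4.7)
The plan is to promote the local matching of $\mu^u_x$ and $m^u_x$ provided by Lemma~\ref{lem.seba} into a \emph{global} proportionality on each unstable leaf, and then combine that with Corollary~\ref{cor.leafwisegibbs} and the iterated Margulis invariance. I first claim that for $\mu$-a.e.\ $x$ there is a single constant $C_x>0$ with $\mu^u_x = C_x\, m^u_x$ on the whole leaf $\cW^u(x)$. Fix the subordinate partition $\xi_0<\cW^u$ used in the construction of $\{\mu^u_x\}$ and set $\xi_n := f^n(\xi_0)$. Applying Lemma~\ref{lem.seba} to each $\xi_n$ gives
\[
\mu^u_x(A) \;=\; \frac{\mu^u_x(\xi_n(x))}{m^u_x(\xi_n(x))}\, m^u_x(A) \qquad \text{for every Borel } A \subset \xi_n(x),
\]
and since the prefactor must agree on overlapping sets it is independent of $n$. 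By Lemma~\ref{lem.growth} every compact $K\subset \cW^u(x)$ eventually lies in some $\xi_n(x)$, so a single constant $C_x$ governs the relation throughout the entire leaf. The normalization $\mu^u_x(\cW^u_1(x))=1$ built into the construction of $\mu^u_x$ then identifies $C_x = 1/m^u_x(\cW^u_1(x))$, and the same argument applied to any other subordinate partition $\xi$ forces
\[
\frac{\mu^u_x(\xi(x))}{m^u_x(\xi(x))} \;=\; C_x \;=\; \frac{1}{m^u_x(\cW^u_1(x))}.
\]

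With this global proportionality in hand the formula drops out cleanly. The Margulis relation $m^u_x\circ f = e^{\lambda}\, m^u_{f^{-1}(x)}$ iterates to $m^u_{x_k}(f^k(A)) = e^{k\lambda}\, m^u_x(A)$ for every $k\in\Z$. Pick any bounded Borel $A\subset\cW^u(x)$ with $m^u_x(A)>0$, apply Corollary~\ref{cor.leafwisegibbs} to write $\mu^u_{x_k}(f^k(A)) = (g_{x_k}(x_k)/g_x(x))\,\jac^u_x(k)\,\mu^u_x(A)$, and replace each $\mu^u$ by the appropriate rescaling of $m^u$ via the proportionality established above. The factor $m^u_x(A)$ cancels and one is left with
\[
\frac{e^{k\lambda}}{m^u_{x_k}(\cW^u_1(x_k))} \;=\; \frac{g_{x_k}(x_k)}{g_x(x)}\,\jac^u_x(k)\,\frac{1}{m^u_x(\cW^u_1(x))};
\]
substituting $1/m^u_x(\cW^u_1(x)) = \mu^u_x(\xi(x))/m^u_x(\xi(x))$ for the arbitrary subordinate $\xi$ in the statement and solving for $\jac^u_x(k)^{-1}$ recovers exactly the claimed identity.

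The main subtlety lies in the first step: one must check that the various local constants produced by Lemma~\ref{lem.seba} for different atoms or different subordinate partitions really do agree and so define one global $C_x$. This consistency comes from the fact that any two subordinate partitions share an open $\cW^u$-neighborhood of $x$ by item (1) of Definition~\ref{def:subordinated}, so the two ratios can be compared on any small $A$ inside the common intersection; the iterates $\xi_n=f^n(\xi_0)$ then sweep out the entire leaf via Lemma~\ref{lem.growth}. Everything remaining is bookkeeping together with the multiplicative iteration of the Margulis relation.
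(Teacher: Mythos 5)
Your proof is correct and follows essentially the same route as the paper's: both rest on Corollary~\ref{cor.leafwisegibbs}, Lemma~\ref{lem.seba}, the Margulis scaling, and the normalization $\mu^u_x(\cW^u_1(x))=1$. Where you diverge is by making explicit a step the paper uses only implicitly. The paper's displayed chain applies the identity of Lemma~\ref{lem.seba} to the set $f^{-k}(A_k)$, which for general $k$ (in particular $k\le 0$, or small $k>0$) need not be contained in the atom $\xi(x)$, whereas Lemma~\ref{lem.seba} as stated only equates the two normalized measures \emph{inside} $\xi(x)$; promoting the local match to a single leaf-wide constant $C_x$, as you do, is precisely what licenses that step, and your identification $C_x = 1/m^u_x(\cW^u_1(x)) = \mu^u_x(\xi(x))/m^u_x(\xi(x))$ then makes the algebra transparent. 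One small caveat worth flagging in your justification of the global proportionality: the iterates $\xi_n=f^n\xi_0$ for $n>0$ are \emph{not} $\delta$-subordinated in the sense of Definition~\ref{def:subordinated} (item (2), atoms inside $\cW^u_\delta$, fails for $n>0$), so you cannot literally invoke Lemma~\ref{lem.seba} for $\xi_n$. The conclusion you want is still true, but it needs a short additional argument — for instance, write $\xi_n(x)$ as a union of atoms of $\xi_0$ spread across finitely many foliated boxes, apply Lemma~\ref{lem.seba} (and item (2) of the Margulis definition) on each box, and observe that the constants must agree on overlaps because the unstable leaf is connected; alternatively, combine Lemmas~\ref{lem.sebafundamental} and \ref{lem.partiiterado} with $m^u_x\circ f = e^\lambda m^u_{f^{-1}x}$ directly. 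Once that is patched, the remainder of your derivation is a clean rearrangement that recovers the formula exactly as claimed.
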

	\begin{proof}
		Let $x$ be a $\mu$ typical point and consider $A_k\eqdef\cW^u_1(x_k)$. Notice that, by our choice of normalization for leafwise measures, that
		\[
		\mu^u_{x_k}(A_k)=1.
		\]
		Thus, applying Corollary~\ref{cor.leafwisegibbs} we deduce 
		\begin{align*}
1=\frac{g_{x_k}(x_k)}{g_x(x)}\times\jac^u_x(k)\times\mu^u_x(f^{-k}(A_k)).
		\end{align*}
		Thus,
		\begin{align*}
		\frac{\jac^u_x(k)^{-1}}{\mu^u_x(\xi(x))}\times\frac{g_x(x)}{g_{x_k}(x_k)}=\frac{\mu^u_x\circ f^{-k}(A_k)}{\mu^u_x(\xi(x))}=\frac{m^u_x\circ f^{-k}(A_k)}{m^u_x(\xi(x))}=\frac{e^{-\lambda k}m^u_{x_k}(A_k)}{m^u_x(\xi(x))}
		\end{align*}
		concluding the lemma.
	\end{proof}
	
	\subsection{Uniform bound on the dynamical density}
	
	Recall the dynamical density function $\rho^u_x(y)=\lim_{n\to-\infty}\frac{\jac^u_x(-n)}{\jac^u_y(-n)}$, for $y\in\cW^u(x)$, we defined before. Usually this function captures the accumulation of non-linearities of the map and is unbounded on every unstable leaf. Here, we show that the assumption that the Gibbs $u$-state $\mu$ admits an unstable Margulis family leads to the highly restrictive scenario when $\rho^u_x$ is bounded uniformly on entire leafs for $\mu$ almost every point. Given positive real numbers $a,b,C$ we say that $a\asymp_C b$ if $C^{-1}a\leq b\leq Ca$.
	
	\begin{proposition}
		\label{prop.main}
		There exists some $C>1$ and a full measure set $\Lambda\subset M$ such that whenever $x\in\Lambda$ it holds
		\[
		\rho^u_x(y)\asymp_C1,
		\]  
		for every $y\in\cW^u(x)$
	\end{proposition}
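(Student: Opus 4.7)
The plan is to extract from Lemma~\ref{lem.formulalinda} a clean multiplicative formula for $\rho^u_x(y)$, and then to bound each of the resulting factors by combining the measurable regularity granted by Lusin's theorem with dynamical recurrence and the $u$-Gibbs property. First, I would evaluate Lemma~\ref{lem.formulalinda} at $k=0$: since $\jac^u_x(0)=1$, $x_0=x$ and $g_{x_0}(x_0)/g_x(x)=1$, it collapses to the normalization identity $m^u_x(\xi(x))/\mu^u_x(\xi(x))=m^u_x(\cW^u_1(x))$, valid for $\mu$-a.e.\ $x$. Plugging this back and introducing the positive measurable function
\[
F(z)\eqdef g_z(z)\,m^u_z(\cW^u_1(z)),
\]
the general statement of Lemma~\ref{lem.formulalinda} becomes the clean cocycle equation
\[
\jac^u_x(k)=\frac{F(x)}{F(x_k)}\,e^{\lambda k},\qquad k\in\Z,
\]
valid for $\mu$-a.e.\ $x$. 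In other words, $\log\jac^u$ is measurably cohomologous to the constant $\lambda$ with transfer function $-\log F$.

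Next, for $y\in\cW^u(x)$ I apply this cocycle formula at both $x$ and $y$ with $k=-n$ and take the quotient; the factors $e^{-\lambda n}$ cancel and we obtain
\[
\frac{\jac^u_x(-n)}{\jac^u_y(-n)}=\frac{F(x)\,F(y_{-n})}{F(y)\,F(x_{-n})}.
\]
Passing to the limit as $n\to\infty$ and invoking the definition of the dynamical density yields the key identity
\[
\rho^u_x(y)=\frac{F(x)}{F(y)}\cdot\lim_{n\to\infty}\frac{F(y_{-n})}{F(x_{-n})}.
\]
The asymptotic factor is controlled by observing that backward iteration contracts distances along unstable leaves, $d(x_{-n},y_{-n})\leq\beta^{-n}d^u(x,y)\to 0$ with $\beta=\inf_{z\in M}m(Df(z)|_{E^u})>1$. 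Applying Lusin's theorem to $F$ gives, for each $\epsilon>0$, a compact set $K_\epsilon$ with $\mu(K_\epsilon)>1-\epsilon$ on which $F$ is continuous and hence uniformly bounded, say $F|_{K_\epsilon}\in[c_\epsilon^{-1},c_\epsilon]$. Birkhoff's theorem applied to $\mathds{1}_{K_\epsilon}$ combined with the exponential closeness of the backward iterates produces a subsequence along which both $x_{-n}$ and $y_{-n}$ belong to $K_\epsilon$; uniform continuity of $F|_{K_\epsilon}$ then constrains the limit $\lim_n F(y_{-n})/F(x_{-n})$ to the interval $[c_\epsilon^{-2},c_\epsilon^2]$.

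The main obstacle is to bound the front factor $F(x)/F(y)$ uniformly for every $y\in\cW^u(x)$, and then to make the resulting constant independent of $x$. Here I would leverage the $u$-Gibbs hypothesis: since the conditional $\mu^u_x$ is absolutely continuous with respect to the leafwise Lebesgue measure, disintegrating $\mu$ along iterated subordinated partitions $\xi_n=f^n(\xi_0)$ and applying Fubini shows that $\mu^u_x(K_\epsilon\cap\cW^u(x))$ is arbitrarily close to full mass on every compact piece of $\cW^u(x)$, for $\mu$-a.e.\ $x$. Hence $F(y)\in[c_\epsilon^{-1},c_\epsilon]$ on a $\mu^u_x$-full, and therefore leafwise-dense, subset of $\cW^u(x)$. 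Plugging back into the key identity shows that $\rho^u_x$ is bounded on that dense subset, and because $\rho^u_x=g_x/g_x(x)$ is continuous on the entire leaf by Proposition~\ref{prop.leafwisegibbs}, the bound extends pointwise to every $y\in\cW^u(x)$. The delicate final step is to promote this estimate to one with a universal constant: fixing $\epsilon$ small once and for all, intersecting with the full-measure set of Birkhoff-generic points, and using ergodicity of $\mu$ to absorb the remaining $\epsilon$-dependence, yielding the desired $C$.
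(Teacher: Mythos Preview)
Your overall strategy coincides with the paper's (Lusin on a suitable measurable function, Birkhoff recurrence to that Lusin set, $u$-Gibbs density on leaves, and continuity of $\rho^u_x$ to pass from a dense leafwise set to every $y$), and your $k=0$ observation yielding the clean coboundary equation $\jac^u_x(k)=F(x)/F(x_k)\,e^{\lambda k}$ is a genuine simplification over the paper, which keeps the $\xi$-dependent factor $\mu^u_x(\xi(x))/m^u_x(\xi(x))$ around throughout.

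There is, however, a real gap in your handling of the asymptotic factor $\lim_n F(y_{-n})/F(x_{-n})$. You claim that ``Birkhoff's theorem combined with the exponential closeness of the backward iterates produces a subsequence along which both $x_{-n}$ and $y_{-n}$ belong to $K_\epsilon$''. Exponential closeness of $y_{-n}$ to $x_{-n}$ does \emph{not} place $y_{-n}$ in $K_\epsilon$: the Lusin set is merely compact, and $F$ may be unbounded or even undefined outside it. And Birkhoff at $y$ is legitimate only if $y$ itself is $\mu$-generic, which an arbitrary $y\in\cW^u(x)$ need not be; this is exactly the difficulty. The paper resolves it by first establishing $\jac^u_x(-k)\asymp_C e^{-k\lambda}$ for $k$ in a set $\cN(x)$ of density larger than $1/2$, then for a given $y\in\cW^u(x)$ choosing (via the $u$-Gibbs property and Rokhlin) a $\mu$-generic $z\in\Lambda$ arbitrarily close to $y$ on the leaf, intersecting $\cN(x)\cap\cN(z)$ (still positive density since both exceed $1/2$), and finally controlling the residual $\jac^u_z(-k)/\jac^u_y(-k)$ by the standard H\"older bound along the exponentially contracting backward orbit---\emph{that} is where exponential closeness is actually used, to compare the everywhere-defined cocycle $\jac^u$ at $z$ and $y$, not to compare the only-a.e.-defined $F$. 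Your argument becomes correct if you first restrict to $\mu^u_x$-a.e.\ $y$ (so $y$ is $\mu$-generic and Birkhoff applies), bound $\rho^u_x(y)$ on that leafwise-full set, and only then extend to every $y$ by continuity of $\rho^u_x$; but as written the role of exponential closeness is misplaced and the logic for the limit factor does not go through. Your final ``absorb the $\epsilon$-dependence by ergodicity'' is also too vague; the paper simply fixes $\mu(K)>1/2$ once and for all, so the resulting constant $C$ is uniform from the start.
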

	
	In the proof of this proposition we are going to use a density argument. Recall that an integer subset $\cN\subset\N$ is said to have (Banach) density $\gamma\geq 0$ if 
	\[
	\lim_{n\to+\infty}\frac{\#\cN\cap[0,n-1]}{n}=\gamma. 
	\]
	Observe that the existence of the limit is part of the definition. One may, of course, speak about upper and lower density by using upper and lower limits, but this won't be necessary for our use of this concept, which is contained in the following simple lemma.
	
	\begin{lemma}
		\label{lem.birkhoff}
		Let $(g,X,\cB,\nu)$ be an ergodic measure preserving dynamical system acting on a probability space $(X,\cB,\nu)$. Let $K\in\cB$ be a measurable set with positive measure. Then, for $\nu$ almost every point $x\in X$ the set 
		\[
		\cN(x)\eqdef\{\ell\in\N;g^{\ell}(x)\in K\}
		\]
		has Banach density equal to $\nu(K)$. 
	\end{lemma}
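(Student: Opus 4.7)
The plan is essentially a one-line application of Birkhoff's pointwise ergodic theorem to the indicator function $\phi \eqdef \mathbf{1}_K$. Since $\nu$ is a probability measure, $\phi \in L^1(X, \cB, \nu)$; ergodicity of $(g, \nu)$ then yields, for $\nu$-almost every $x \in X$,
\[
\lim_{n \to \infty} \frac{1}{n} \sum_{\ell=0}^{n-1} \phi(g^\ell x) \;=\; \int_X \phi \, d\nu \;=\; \nu(K).
\]

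The only remaining step is the trivial combinatorial identification of the Birkhoff sum with the counting function. Since $\phi(g^\ell x) = 1$ precisely when $\ell \in \cN(x)$, we have $\sum_{\ell=0}^{n-1} \phi(g^\ell x) = \#\bigl(\cN(x) \cap [0, n-1]\bigr)$, and substituting into the previous display gives, for $\nu$-a.e. $x \in X$,
\[
\lim_{n \to \infty} \frac{\#\bigl(\cN(x) \cap [0, n-1]\bigr)}{n} \;=\; \nu(K),
\]
which is exactly the defining limit for the (Banach) density of $\cN(x)$ to equal $\nu(K)$ as recorded in the paragraph immediately preceding the lemma.

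One terminological point deserves emphasis, as it dictates that the Birkhoff approach is not only sufficient but essentially the only one available. The paper's displayed definition of ``(Banach) density'' uses the Ces\`aro form $\#\cN \cap [0, n-1]/n$, with the window anchored at $0$; no uniform control over shifted intervals $[m, m+n-1]$ is demanded by that definition. The stronger (standard) notion of Banach density, namely $\lim_n \sup_m \#\cN \cap [m, m+n-1]/n$, is genuinely larger in general and need \emph{not} equal $\nu(K)$ under ergodicity alone — a Bernoulli shift with a cylinder set already exhibits arbitrarily long constant blocks along typical orbits, yielding upper and lower Banach densities of $1$ and $0$. Hence no strengthening of the Birkhoff argument is needed, and none is available from the hypotheses; the main (and only) obstacle is the purely notational one of reconciling the paper's convention with the reader's prior expectation, which the above identification of the Birkhoff sum handles directly.
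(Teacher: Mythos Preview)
Your proof is correct and follows the same approach as the paper: apply Birkhoff's ergodic theorem to the indicator function of $K$ and identify the Birkhoff average with the defining limit for the density of $\cN(x)$. Your additional remark distinguishing the paper's anchored-window definition from the standard uniform Banach density is accurate and worth noting, but goes beyond what the paper's own proof records.
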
 
	\begin{proof}
		By ergodicity and by Birkhoff's ergodic theorem applied to the indicator function $\chi_K$ of $K$, we have that for $\nu$ almost every $x\in X$ it holds
		\[
		\lim_{n\to+\infty}\frac{\card\{\ell\in\N;g^{\ell}(x)\in K\}}{n}=\nu(K),
		\]	
		which is exactly the same as to say that $\cN(x)$ has Banach density equal to $\nu(K)$.
	\end{proof}
	
	We can now complete the proof of Proposition~\ref{prop.main}
	
	\begin{proof}[Proof of Proposition~\ref{prop.main}]
		We know from Lemma~\ref{lem.formulalinda} that for $\mu$ a.e. $x\in M$ it holds
		\begin{equation}
			\label{eq.linda}
			\jac^u_x(k)^{-1}=\frac{g_{x_k}(x_k)}{g_x(x)}\times\frac{\mu^u_x(\xi(x))}{m^u_x(\xi(x))}\times m^u_{x_k}(\cW^u_1(x_k))\times e^{-\lambda k}.
		\end{equation}
		Chaging variables in \eqref{eq.linda} we get
		\begin{equation}
			\label{eq.lindadois}
			\jac^u_x(-k)=\frac{g_{x}(x)}{g_{x_{-k}}(x_{-k})}\times\frac{\mu^u_{x_{-k}}(\xi(x_{-k}))}{m^u_{x_{-k}}(\xi(x_{-k}))}\times m^u_x(\cW^u_1(x))\times e^{-\lambda k}.
		\end{equation}
		Apply Lusin's theorem to the measurable function $x\in M\mapsto \frac{\mu^u_x(\xi(x))}{m^u_x(\xi(x))}\times g_x(x)$ to find a compact set $K$ with $\mu(K)>0.5$ where this function is uniformly bounded, from above and below. Applying Lemma~\ref{lem.birkhoff} with $g=f^{-1}$ we deduce that for $\mu$-almost every $x\in M$ there exists a subset $\cN(x)\subset\cN$ such that $k\in\cN(x)$ if, and only if, $f^{-k}(x)=x_{-k}\in K$. 
		
		By the very definition of $K$ and \eqref{eq.lindadois} this shows that for $\mu$ a.e. $x\in M$ it holds
		\[
		\frac{\jac^u_x(-k)}{e^{-k\lambda}}\asymp_C 1,
		\]
		for some uniform constant $C\geq 1$ and a subset $\cN(x)\subset\N$ of $k\in\N$ with density larger than $0.5$. Denote by $\Gamma$ the set of $x\in M$ with this property, for a fixed constant $C$. Denote by $\xi_n=f^n(\xi)$ and consider
		\[
		\Gamma_n\eqdef\{x\in\Gamma;\mu^{\xi_n}_x(\Gamma\cap\xi_n(x))=1\}.
		\]
		By Rokhlin's theorem it follows that $\mu(\Gamma_n)=1$, for every $n\in\N$. We define $\Lambda=\cap_{n\in\N}\Gamma_n$, which is a full measure set. Take a point $x\in\Lambda$ and take any point $y\in\cW^u(x)$. Take $n\in\N$ large enough so that $\cW^u_{\varepsilon}(y)\subset\xi_n(x)$, for some $\varepsilon>0$. Since $\mu^{\xi_n}_x<<\operatorname{Leb}^u_x$ there must exist some $z\in\cW_{\varepsilon}^u(y)\cap\Lambda$. Thus, we have
		\[
		\frac{\jac^u_x(-k)}{\jac^u_y(-k)}=\frac{\jac^u_x(-k)}{e^{-k\lambda}}\times\frac{e^{-k\lambda}}{\jac^u_z(-k)}\times\frac{\jac^u_z(-k)}{\jac^u_y(-k)}\asymp_{C^3}1,
		\]
		for every $k$ in the subset $\cN(x)\cap\cN(z)$ which has positive density. Let $k_n$ be an enumeration of this set. The result now follows because
		\[
		\lim_{n\to\infty}\frac{\jac^u_x(-k_n)}{\jac^u_y(-k_n)}=\rho^u_x(y)\qedhere
		\]
	\end{proof}	
	
\section{Bounded density implies constant periodic data}\label{sec.final}

In this section we shall complete the proof of Theorem~\ref{main.technical} combining the results of the previous section with the following.

\begin{theorem}
	\label{teo.densiteborne}
Let $f:M\to M$ be a partially hyperbolic diffeomorphism with minimal unstable foliation. Assume that there exists some $u$-Gibbs measure $\mu\in\cp$ so that the family of densities $\{g_x\}_{x\in M}$, given by Proposition~\ref{prop.leafwisegibbs}, satisfy: there exists a measurable set $\Lambda\subset M$ with $\mu(\Lambda)>0$ so that for every $x\in\Lambda$ it holds
\[
0<\inf_{y\in\cW^u(x)}g_x(y)\leq\sup_{y\in\cW^u(x)}g_x(y)<\infty.
\]	
Then, $f$ has constant unstable Jacobian periodic data. 
	
\end{theorem}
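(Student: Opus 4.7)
The plan is to show that for every periodic point $p$ of period $\pi$, the exponent $\log\jac^u(p)$ coincides with $\chi^u(\mu):=\int\log\jac^u\,d\mu$, which immediately gives constant unstable Jacobian periodic data. Two ingredients combine: a \emph{bounded distortion} estimate along unstable leaves in $\Lambda$, produced directly by the hypothesis, and a \emph{shadowing} estimate at the periodic orbit, made possible by minimality of $\cW^u$.

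Since $\mu$ is $u$-Gibbs and $\cW^u$ is minimal, $\operatorname{supp}(\mu)=M$. By Proposition~\ref{prop.leafwisegibbs} the hypothesis is equivalent to: $\rho^u_x$ is bounded above and below on $\cW^u(x)$ for every $x\in\Lambda$. Using the cocycle rule $\rho^u_{f(x)}(f(y))=(\jac^u_x/\jac^u_y)\rho^u_x(y)$ together with the continuous, positive bounds on $\jac^u$ coming from compactness of $M$, one checks that this property is $f$-invariant and $\cW^u$-saturated, so by ergodicity we may take $\Lambda$ to be $f$-invariant, $\cW^u$-saturated and of full $\mu$-measure. Lusin's theorem then yields a compact $K\subset\Lambda$ with $\mu(K)>0$ on which a uniform bound $\rho^u_x(y)\in[C^{-1},C]$ holds. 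Iterating the cocycle,
\[
\rho^u_{f^n(x)}(f^n(y))=\frac{\jac^u_x(n)}{\jac^u_y(n)}\rho^u_x(y),
\]
so whenever $x\in K$ and $f^n(x)\in K$, the ratio $\jac^u_x(n)/\jac^u_y(n)$ lies in $[C^{-2},C^{2}]$ for every $y\in\cW^u(x)$. By Birkhoff, for $\mu$-a.e. $y^{*}\in K$ the return set $\cN(y^{*}):=\{n\in\N\colon f^n(y^{*})\in K\}$ has positive density $\mu(K)$.

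Now fix $y^{*}\in K$ Birkhoff-generic for $\log\jac^u$ (so $\tfrac{1}{N}\log\jac^u_{y^{*}}(N)\to\chi^u(\mu)$) and with $\cN(y^{*})$ of positive density. Let $L$ be a global Lipschitz constant for $f$ and fix a small $\epsilon_0>0$. For any periodic $p$ of period $\pi$, minimality of $\cW^u$ lets us pick $y_\delta\in\cW^u(y^{*})\cap B(p,\delta)$ for each $\delta>0$; with $K(\delta):=\lfloor\log(\epsilon_0/\delta)/(\pi\log L)\rfloor$ we obtain $d(f^k(y_\delta),f^k(p))\leq L^k\delta\leq\epsilon_0$ for every $k\leq K(\delta)\pi$, while $K(\delta)\to\infty$ as $\delta\to 0$. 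H\"older continuity of $\log\jac^u$ (with exponent $\alpha$) gives
\[
\bigl|\log\jac^u_{f^{k\pi}(y_\delta)}(\pi)-\log\jac^u_p(\pi)\bigr|\leq C_H(L^{k\pi}\delta)^{\alpha},
\]
and summing the geometric series $\sum_{k<K(\delta)}(L^{k\pi}\delta)^{\alpha}\leq\epsilon_0^{\alpha}/(L^{\pi\alpha}-1)$ yields
\[
\log\jac^u_{y_\delta}(K(\delta)\pi)=K(\delta)\pi\,\log\jac^u(p)+O_p(1),
\]
with implicit constant depending on $p$ but not on $\delta$.

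To conclude, select a sequence $\delta_n\to 0$ so that $K(\delta_n)\pi\in\cN(y^{*})$, which is possible because $\cN(y^{*})$ has positive density and $K(\delta)$ ranges over all large integers as $\delta$ decreases. Bounded distortion gives $\log\jac^u_{y_{\delta_n}}(K(\delta_n)\pi)=\log\jac^u_{y^{*}}(K(\delta_n)\pi)+O(1)$, and by Birkhoff at $y^{*}$ along $N=K(\delta_n)\pi\to\infty$ we have $\tfrac{1}{N}\log\jac^u_{y^{*}}(N)\to\chi^u(\mu)$. Combining with the shadowing estimate and dividing by $K(\delta_n)\pi$ forces $\log\jac^u(p)=\chi^u(\mu)$; as $p$ was arbitrary, the unstable Jacobian periodic data is constant. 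The most delicate step is the H\"older/geometric-series estimate: the cumulative error stays $O_p(1)$ only because the Lipschitz escape rate restricts $K(\delta)$ to grow logarithmically in $1/\delta$, which is exactly enough for the H\"older-weighted geometric series to be summable. Matching $K(\delta)\pi$ with a return time to $K$, needed to transfer Birkhoff's conclusion from $y^{*}$ to $y_{\delta_n}$ via bounded distortion, is a secondary subtlety resolved by the positive density of $\cN(y^{*})$.
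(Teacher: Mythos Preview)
Your argument is essentially correct and reaches the conclusion, but there is one small gap worth flagging: positive density of $\cN(y^{*})$ does \emph{not} by itself guarantee that $\cN(y^{*})$ contains infinitely many multiples of $\pi$ (take for instance $\cN=\{n:n\not\equiv 0\pmod{\pi}\}$, which has density $1-1/\pi$). The fix is immediate: when selecting $y^{*}\in K$, also require that $y^{*}$ return to $K$ infinitely often under $f^{\pi}$, which holds $\mu$-almost everywhere by Poincar\'e recurrence for the $\mu$-preserving map $f^{\pi}$. With that adjustment everything checks out: the $f$-invariance and $\cW^u$-saturation of the boundedness locus, the Lusin step, the cocycle identity giving bounded distortion at double return times to $K$, and the H\"older/geometric-series shadowing bound are all fine.

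Your route is genuinely different from the paper's. The paper first upgrades the hypothesis to a \emph{uniform} bound $\rho^u_x(y)\asymp_C 1$ valid at \emph{every} $x\in M$ and every $y\in\cW^u(x)$ (Lemma~\ref{lem.davi}): given a single leaf $\cW^u(x_0)$ on which $\rho^u_{x_0}$ is bounded by $C_0$, minimality lets one approximate any pair $p,y\in\cW^u(p)$ by points $\hat x,\hat y\in\cW^u(x_0)$, and the cocycle identity $\rho^u_{\hat x}(\hat y)=\rho^u_{\hat x}(x_0)\rho^u_{x_0}(\hat y)$ plus continuity of $\rho^u$ transfers the bound $C_0^2$ to $\rho^u_p(y)$. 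With this global bound in hand, the paper compares two periodic points $p,q$ directly (Proposition~\ref{prop.paris}): for each large $n$ one picks $y\in\cW^u(p)$ close enough to $q$ that $\jac^u_q(nk)/\jac^u_y(nk)\asymp_C 1$ by mere continuity of $\jac^u$, while $\jac^u_p(-nk)/\jac^u_y(-nk)\asymp_{C^2}1$ by the global $\rho^u$-bound, and letting $n\to\infty$ forces $\jac^u(p)=\jac^u(q)$. This avoids both the H\"older estimate and the ergodic-theoretic machinery (Lusin, Birkhoff, recurrence) entirely. Your argument, by contrast, never globalizes the bound; it stays measure-theoretic throughout and, as a bonus, identifies the common value of $\log\jac^u(p)$ as the unstable Lyapunov exponent $\chi^u(\mu)$. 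The paper's proof is shorter and needs only continuity of $\jac^u$; yours is more quantitative and ties the periodic data explicitly to the reference measure.
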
	
	
Let us show how to complete the proof of our main result using the above theorem.

\subsubsection{Proof of Theorem~\ref{main.technical}}
Assume that there exists some ergodic $u$-Gibbs measure $\mu\in\cp$ admiting an unstable Margulis family. By Proposition~\ref{prop.main} we obtain a uniform constant $C>1$ such that for almost every $x\in M$ and every $y\in\cW^u(x)$ it holds 
\[
\rho^u_x(y)\asymp_C1.
\] 	
By Proposition~\ref{prop.leafwisegibbs} we deduce that, for $\mu$-almost every $x\in M$ it holds
\[
C^{-1}g_x(x)\leq g_x(y)\leq g_x(x)C.
\] 
Applying Theorem~\ref{teo.densiteborne} we deduce
 $\jac^u(p)=\jac^u(q)$, for every $p,q\in\operatorname{Per}(f)$, concluding.\qed

\subsection{Proof of Theorem~\ref{teo.densiteborne}}

The rest of this section is devoted to the proof of Theorem~\ref{teo.densiteborne}. The key idea is to use the minimality of the strong unstable foliation in order to uniformize the  bound on the dynamical density. This is precisely the goal of the next lemma. 
	
	\begin{lemma}
		\label{lem.davi}
		Let $f:M\to M$ be a partially hyperbolic diffeomorphism with minimal unstable foliation. Assume that for some $x\in M$, it holds
		\[
		0<\inf_{y\in\cW^u(x)}\rho^u_x(y)\leq\sup_{y\in\cW^u(x)}\rho^u_x(y)<\infty.
		\]	
		Then, there exists a uniform constant $C>1$ such that for every $x\in M$ and every $y\in\cW^u(x)$ it holds
		\[
		\rho^u_x(y)\asymp_C 1.
		\]
	\end{lemma}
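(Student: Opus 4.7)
The plan is to propagate the leafwise bound from $\cW^u(x_0)$ (where $x_0$ denotes the basepoint provided by the hypothesis) to every unstable leaf in $M$, via three ingredients: the cocycle identity for $\rho^u$ from Remark~\ref{rem.concicledensity}, continuity of the dynamical density in both variables, and the density of $\cW^u(x_0)$ given by minimality. Setting $C_0 \eqdef \sup_{y \in \cW^u(x_0)} \rho^u_{x_0}(y) / \inf_{y \in \cW^u(x_0)} \rho^u_{x_0}(y)$, the cocycle gives $\rho^u_p(q) = \rho^u_{x_0}(q)/\rho^u_{x_0}(p) \in [C_0^{-1}, C_0]$ for every $p, q \in \cW^u(x_0)$, so the bound $\rho^u_p(q) \asymp_{C_0} 1$ already holds for every pair of points lying on the single leaf $\cW^u(x_0)$.

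Next I would establish continuity of $\rho^u$ in both variables. Using the telescoping formula
\[
\log \rho^u_p(q) = \sum_{m=1}^{\infty} \bigl[\log \jac^u_{f^{-m}(q)} - \log \jac^u_{f^{-m}(p)}\bigr],
\]
Hölder regularity of $\log \jac^u$ along unstable leaves combined with the uniform exponential contraction of past iterates on the unstable foliation, $d^u(f^{-m}(p), f^{-m}(q)) \lesssim \sigma^{m} d^u(p,q)$ for some $\sigma<1$, yields a tail bound of order $\sigma^{m\alpha} d^u(p,q)^{\alpha}$. This makes the series converge uniformly on pairs $(p,q)$ with $d^u(p,q)$ bounded by any fixed $R$. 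Combined with the $C^0$-continuous dependence of local unstable plaques on the basepoint, which is standard in partial hyperbolicity, this shows that $(p,q) \mapsto \rho^u_p(q)$ is continuous in the ambient topology on such pairs.

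Finally I would invoke minimality. Given any $z \in M$ and $w \in \cW^u(z)$, set $R \eqdef d^u(z,w)$. Density of $\cW^u(x_0)$ produces $p_n \in \cW^u(x_0)$ with $p_n \to z$; continuous dependence of local plaques then gives $q_n \in \cW^u_{R+1}(p_n)$ with $q_n \to w$ and $d^u(p_n,q_n) \to R$. The key point is that since $p_n \in \cW^u(x_0)$, the entire leaf $\cW^u(p_n)$ coincides with $\cW^u(x_0)$, so automatically $q_n \in \cW^u(x_0)$. The first paragraph gives $\rho^u_{p_n}(q_n) \in [C_0^{-1}, C_0]$, and the second paragraph gives $\rho^u_{p_n}(q_n) \to \rho^u_z(w)$; therefore $\rho^u_z(w) \in [C_0^{-1}, C_0]$, the desired uniform bound with $C = C_0$.

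The main obstacle is verifying the joint continuity claim above, specifically that sequences of pairs $(p_n, q_n)$ converging ambiently to $(z, w)$ with $q_n \in \cW^u(p_n)$ also satisfy $d^u(p_n, q_n) \to d^u(z, w)$ without any degradation from the possibly large value of $R$. This is delicate because $w$ may be far from $z$ along the leaf, so one needs $C^0$-continuity of long unstable plaques in the basepoint, which is a standard consequence of the uniform hyperbolicity of $Df$ on $E^u$ but must be applied with some care to keep the tail estimates uniform throughout the approximation.
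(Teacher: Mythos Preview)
Your proposal is correct and follows essentially the same strategy as the paper's proof: use the cocycle identity to get a uniform two-sided bound on $\rho^u_p(q)$ for all pairs $p,q$ lying on the distinguished leaf $\cW^u(x_0)$, then combine minimality of $\cW^u$ with continuity of $(p,q)\mapsto\rho^u_p(q)$ to transfer this bound to every leaf. The only cosmetic differences are that the paper phrases the last step as a proof by contradiction rather than a direct limit, uses a slightly different normalization of the constant (their $C_0$ bounds $\rho^u_{x_0}$ itself, so the cocycle produces $C_0^2$, whereas your $C_0$ is already the ratio $\sup/\inf$), and invokes continuity without the justification via the telescoping series that you supply; your added care about joint continuity of $\rho^u$ over long plaques is a point the paper leaves implicit.
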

	\begin{proof}
 Let $C_0>1$ and $x\in M$ so that $\rho^u_{x}(y)\asymp_{C_0} 1$ for every $y\in\cW^u(x)$. We claim that the Lemma is true with $C\eqdef C_0^2$ as uniform bound. Assume by contradiction that this is not true. Then, for some $p\in M$, there exists some $y\in\cW^u(p)$ such that
		\[
		\rho^u_p(y)\notin[C_0^{-2},C_0^2].
		\]
By the minimality of the unstable foliation there exists points $\hat{x}$ as close as we want to $p$ and $\hat{y}$ as close as we want to $y$, both of them lying on $\cW^u(x)$. By continuity this implies that 
		\[
		\rho^u_{\hat{x}}(\hat{y})\notin[C_0^{-2},C_0^2].
		\]
		However, since
		\[
		\rho^u_{\hat{x}}(\hat{y})=\rho^u_{\hat{x}}(x)\rho^u_x(\hat{y})\in[C_0^{-2},C_0^2],
		\]
		by assumption, we achieve a contradiction.
	\end{proof}	
	
	As a general fact, we establish below that a uniform bound on the dynamical density function always implies constant unstable Jacobian periodic data. Combining Proposition~\ref{prop.leafwisegibbs} with Lemma~\ref{lem.davi}, Proposition~\ref{prop.paris} finishes the proof of Theorem~\ref{teo.densiteborne}.  
	
	\begin{proposition}
		\label{prop.paris}
		Let $f:M\to M$ be a partially hyperbolic diffeomorphism. Assume that there exists some $C>1$ such that for every $x\in M$ and every $y\in\cW^u(x)$ it holds 
		\[
		\rho^u_x(y)\asymp_C1,
		\]
		Then, $\jac^u(p)=\jac^u(q)$, for every $p,q\in\operatorname{Per}(f)$.
	\end{proposition}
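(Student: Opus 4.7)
My plan is to use the $f$-equivariance of $\rho^u$ to convert the uniform bound on the dynamical density into a sharp estimate for finite-time unstable Jacobians along orbits starting in the unstable leaf of a periodic point, and then propagate the conclusion to arbitrary periodic points by density.

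First I would establish the cocycle formula
\[
\rho^u_{f^k(x)}(f^k(y))=\frac{\jac^u_y(k)}{\jac^u_x(k)}\,\rho^u_x(y),\qquad y\in\cW^u(x),\ k\in\Z,
\]
which follows directly from the definition $\rho^u_x(y)=\lim_n\jac^u_x(-n)/\jac^u_y(-n)$ together with the multiplicative cocycle identity $\jac^u_x(k+m)=\jac^u_x(k)\jac^u_{f^k(x)}(m)$ by a telescoping reindexation. Specializing to $x=p$ a periodic point of period $\pi$, so that $f^{k\pi}(p)=p$, this becomes
\[
\rho^u_p(f^{k\pi}(y))=\frac{\jac^u_y(k\pi)}{\jac^u(p)^{k\pi}}\,\rho^u_p(y),\qquad y\in\cW^u(p),\ k\in\N.
\]
Both factors $\rho^u_p(y)$ and $\rho^u_p(f^{k\pi}(y))$ lie in $[C^{-1},C]$ by hypothesis (observe that $f^{k\pi}(y)\in\cW^u(p)$), hence
\[
\jac^u_y(k\pi)\asymp_{C^2}\jac^u(p)^{k\pi}\qquad\text{for every}\ y\in\cW^u(p),\ k\in\N,
\]
and in particular $\tfrac{1}{k\pi}\log\jac^u_y(k\pi)\to\log\jac^u(p)$.

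To compare $\jac^u(p)$ with the Jacobian at another periodic point $q$ I would exploit the density of $\cW^u(p)$ in $M$, which in the intended application is supplied by the minimality hypothesis of Theorem~\ref{teo.densiteborne}. Pick $y_n\in\cW^u(p)$ with $y_n\to q$: since $y\mapsto\jac^u_y(k\pi)$ is continuous for each fixed $k$, the previous estimate passes to the limit, giving $\jac^u_q(k\pi)\asymp_{C^2}\jac^u(p)^{k\pi}$. On the other hand, since $q$ is periodic of period $\pi(q)$, Euclidean division $k\pi=m_k\pi(q)+r_k$ with $0\le r_k<\pi(q)$ yields $\jac^u_q(k\pi)=\jac^u(q)^{m_k\pi(q)}\cdot\jac^u_q(r_k)$, where $\jac^u_q(r_k)$ is bounded uniformly in $k$. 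Combining both bounds, taking logarithms, dividing by $k\pi$ and letting $k\to\infty$ (so that $m_k\pi(q)/k\pi\to 1$) forces $\log\jac^u(q)=\log\jac^u(p)$.

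The main obstacle is precisely this last step: without some form of density of unstable leaves, the sharp bound $\jac^u_y(k\pi)\asymp\jac^u(p)^{k\pi}$ only compares $p$ with periodic points already lying on $\cW^u(p)$. Minimality of $\cW^u$, which is the ambient hypothesis of Theorem~\ref{teo.densiteborne} (the only place where Proposition~\ref{prop.paris} is invoked), is exactly what makes the continuity-plus-approximation argument work to bridge disconnected periodic orbits; everything else is a direct calculation using the cocycle property of $\rho^u$ and the uniform bound on the dynamical density.
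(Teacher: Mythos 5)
Your proof is correct and takes essentially the same approach as the paper: both arguments convert the uniform bound on the dynamical density into a sharp finite-time estimate $\jac^u_y(n)\asymp\jac^u(p)^{n}$ for $y\in\cW^u(p)$, transfer it to $q$ by density of $\cW^u(p)$ together with continuity of $y\mapsto\jac^u_y(n)$, and then use periodicity of $q$ to conclude (your cocycle formula for $\rho^u$ makes the first step a bit more explicit, and you use Euclidean division where the paper normalizes by $k=\pi(p)\pi(q)$, but these are cosmetic differences). You are also right that the statement of Proposition~\ref{prop.paris} as written omits the minimality (or at least density of the unstable leaves through periodic points) that both your proof and the paper's proof actually invoke; that hypothesis is supplied by Theorem~\ref{teo.densiteborne}, which is the only place the proposition is used.
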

	\begin{proof}
		Let $p,q\in\operatorname{Per}(f)$ be two arbitrary periodic points. Define, for the sake of this proof $k\eqdef\pi(p)\times\pi(q)$. Then,
		\[
		\jac^u_\star(nk)=\jac^u(\star)^{nk},
		\]
		for every $n\in\Z$ and $\star=p,q$. 
		
		\begin{figure}[h]
			\centering
			\begin{tikzpicture}[scale=0.8]
				\draw[red!60!black, thick] (-2,1) node[left]{$\cW^u(p)$}.. controls (.5,1) and (1,.5) .. (0,0) .. controls (-1,-.5) and (-.5,-1) .. (4,-1) ;
				\draw (-1.8,1) node{$\bullet$};
				\draw (-1.8,1) node[below]{$p$};
				\draw (3.5,-1) node{$\bullet$};
				\draw (3.5,-1) node[above]{$y$};
				\draw (3.5,-1.65) node{$\bullet$};
				\draw (3.5,-1.65) node[below]{$q$};
			\end{tikzpicture}
			\caption{\label{fig.figfinal} Apllying minmality to find $y\in\cW^u(p)$ which shadows $O(q)$ for a finite but long time.}
		\end{figure}
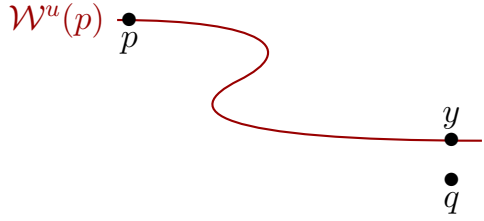
		
		By minimality of the unstable foliation, for each $n>>1$ there exists $y\in\cW^u(p)$ close enough to $q$ so that
		\[
		\frac{\jac^u_q(nk)}{\jac^u_y(nk)}\asymp_C 1.
		\]  
		This implies that 
		\begin{align*}
			\jac^u(p)^{-nk}&=\jac^u_p(-nk)\leq C\jac^u_y(-nk)\\
			&\leq C^2\jac^u_q(-nk)=C^2\jac^u(q)^{-nk}.
		\end{align*}
		Reversing the roles of $p$ and $q$ in this argument (using denseness of $\cW^u(q)$ instead) we get
		\[
		\jac^u(q)^{-nk}\leq C^2\jac^u(p)^{-nk},
		\] 
		for every $n>>1$. These inequalities imply directly that $\jac^u(p)=\jac^u(q)$. 
	\end{proof}

	\bibliographystyle{plain}
	\bibliography{Bib}
	
	\smallskip
	
	\begin{flushleft}
		
		{\scshape Vítor Gomes}\\
	Instituto de Matem\'atica e Estat\'istica, Universidade Federal Fluminense\\
	Rua Prof. Marcos Waldemar de Freitas Reis, S/N -- Blocos G e H\\
	Campus do Gragoatá, Niterói, Rio de Janeiro 24210-201, Brasil\\
	email:  \texttt{vitorgf@id.uff.br}	\\
		\vspace{1cm}	
		{\scshape Bruno Santiago}\\
	Instituto de Matem\'atica e Estat\'istica, Universidade Federal Fluminense\\
	Rua Prof. Marcos Waldemar de Freitas Reis, S/N -- Blocos G e H\\
	Campus do Gragoatá, Niterói, Rio de Janeiro 24210-201, Brasil\\
	email:  \texttt{brunosantiago@id.uff.br}

	\end{flushleft}
	
\end{document}